\documentclass[11pt]{article}
\usepackage[utf8]{inputenc}
\usepackage[margin=1.1in]{geometry} 
\usepackage{amsmath,amsthm,amssymb}
\usepackage{color}
\usepackage{mathrsfs}
\usepackage{enumerate}
\usepackage{graphicx}
\usepackage{caption}
\usepackage{mathtools}
\usepackage{dsfont}
\usepackage{todonotes}
\usepackage{comment}
\usepackage{titling}

\usepackage{hyperref}
\usepackage{amsrefs}

\newtheoremstyle{introdef}% hnamei
{3pt}% hSpace abovei
{3pt}% hSpace belowi
{\itshape}% hBody fonti
{}% hIndent amounti
{\mdseries}% hTheorem head fonti
{}% hPunctuation after theorem headi
{0pt}% hSpace after theorem headi
{}% hTheorem head spec (can be left empty, meaning ‘normal’)i

\newcommand{\abtor}{\mathrm{Ab}_{\mathrm{tor}}}
\newcommand{\BC}{\mathbb C} \newcommand{\BH}{\mathbb H}
\newcommand{\BR}{\mathbb R} 
\newcommand{\BN}{\mathbb N} 
 \newcommand{\BZ}{\mathbb Z}

\newcommand{\CG}{\mathcal G}

 \newcommand{\CN}{\mathcal N}
 \newcommand{\CP}{\mathcal P}

\DeclareMathOperator{\PSL}{PSL} % Spezielle lineare Gruppe
\DeclareMathOperator{\Isom}{Isom} % Isometrien einer Mf
\DeclareMathOperator{\Hom}{Hom} % Homomorphismen
\DeclareMathOperator{\vol}{vol} % Volumen

\DeclareMathOperator{\id}{id}

\DeclareMathOperator{\diam}{diam}

\DeclareMathOperator{\rank}{rank}

\DeclareMathOperator{\length}{length}

\DeclareMathOperator{\TCH}{TCH}
\DeclareMathOperator{\TCC}{TCC}

\DeclareMathOperator{\CC}{CC}
\DeclareMathOperator{\hull}{CH}
\DeclareMathOperator{\sys}{sys}

\makeatletter
\DeclareRobustCommand\bigop[2][1]{%
  \mathop{\vphantom{\sum}\mathpalette\bigop@{{#2}{#1}}}\slimits@
}
\newcommand{\bigop@}[2]{\bigop@@#1#2}
\newcommand{\bigop@@}[3]{%
  \vcenter{%
    \sbox\z@{$#1\sum$}%
    \hbox{\resizebox{#3\dimexpr\ifx#1\displaystyle.9\fi\dimexpr\ht\z@+\dp\z@}{!}{$\m@th#2$}}%
  }%
}
\makeatother

\newcommand{\bigast}{\DOTSB\bigop[1]{*}}

\usepackage{relsize}

\newtheorem{theorem}{Theorem}

\newtheorem{lemma}{Lemma}
\newtheorem{claim}[lemma]{Claim}
\newtheorem{example}[lemma]{Example}
\newtheorem{question}[lemma]{Question}
\newtheorem*{lemma*}{Lemma}
\newtheorem*{example*}{Example}
\newtheorem*{theorem*}{Theorem}
\newtheorem*{prop*}{Proposition}
\newtheorem{prop}[lemma]{Proposition}

\newtheorem*{cor*}{Corollary}
\newtheorem*{fact*}{Fact}

\newtheorem*{claim*}{Claim}

\theoremstyle{definition}
\newtheorem{defn}[lemma]{Definition}

\theoremstyle{introdef}
\newtheorem*{empty*}{}

\numberwithin{lemma}{section}

\DeclareMathOperator\arcosh{arcosh}

\newcommand{\Z}{\mathbb{Z}}
\newcommand{\C}{\mathbb{C}}

\newcommand{\CD}{\mathcal{D}}

\title{Combinations and chromatography of paths of Kleinian groups}
\author{Matthew Zevenbergen}
\date{}

\begin{document}

\maketitle

\vspace{-.6in}

\begin{abstract}
    We study continuous paths in the Chabauty topology on the set $\mathcal{D}_n$ of torsion free discrete subgroups of the isometry group of $n$-dimensional hyperbolic space. We prove a combination theorem for paths in $\mathcal{D}_n$, which allows us to construct an exotic path of discrete subgroups along which no two subgroups are isomorphic. We also introduce a technique we refer to as ``chromatography" to prove a decomposition theorem that characterizes paths of convex cocompact groups in $\mathcal{D}_3$.
\end{abstract}

\section{Introduction}

Let $\CD_n$ denote the set of all torsion free discrete subgroups of $\mathrm{Isom}^+(\BH^n)$, the group of orientation preserving isometries of hyperbolic $n$-space, and equip $\CD_n$ with the Chabauty topology (see Definition \ref{def:Chabauty}). The goal of this paper is to study continuous paths $P:[0,1]\longrightarrow\CD_n$, each of which describes a continuous deformation between the hyperbolic manifold quotients $P(0)\backslash\BH^n$ and $P(1)\backslash\BH^n$. We  prove a combination theorem to produce exotic examples of paths, and use a technique we call chromatography to prove a decomposition theorem that describes paths of convex cocompact groups in $\CD_3$.\par

Chabauty topology convergence of discrete subgroups of $\Isom^+(\BH^3)$, also called Kleinian groups, is a fundamental notion in the study of hyperbolic $3$-manifolds. Also known as the topology of \textit{geometric convergence}, the Chabauty topology appears in Thurston's hyperbolization theorem \cite{THUI}, Thurston's hyperbolic Dehn surgery theorem (see \cite{THUR} and \cite{BEN}), and in numerous other significant results (e.g. \cite{LimitTameness}, \cite{BCM-EndingLaminations}, \cite{DICKCOVER}, \cite{JORMAR}, \cite{NAM}). The term ``geometric convergence" is justified by the equivalence between Chabauty topology convergence of Kleinian groups and framed Gromov-Hausdorff convergence of the quotient hyperbolic manifolds \cite[Ch. E]{BEN}.

\vspace{-.1in}

\paragraph{Combinations and examples.}

Classical theory pioneered by Ahlfors, Bers, Marden, and Sullivan (see \cite{MAT}) studies deformations of hyperbolic structures on a $3$-manifold $M$ from the perspective of discrete embeddings of the \textit{fixed} group $\pi_1M$ into $\Isom^+(\BH^3)$. The space $\CD_3$, however, contains \textit{all torsion free discrete subgroups}, with no restriction on isomorphism type. The present work considers deformations between a pair of hyperbolic manifolds $P(0)\backslash\BH^3$ and $P(1)\backslash\BH^3$ from the perspective of a path $P:[0,1]\longrightarrow\CD_3$. Our first aim is to differentiate these notions of deformations by investigating how the isomorphism type of the groups $P(t)$ can change along such a path.\par

There is a rich history of examples of convergent \textit{sequences} $\Gamma_i\rightarrow\Gamma$ in $\CD_3$ where $\Gamma$ is not isomorphic to any of the approximating groups $\Gamma_i$ (e.g. \cite{Brock-Iteration}, \cite{Kerckhoff-Thurston}, \cite{NamaziSouto-Heegaard}). Many of these constructions, however, do not extend to paths. For example, Kerckhoff and Thurston \cite{Kerckhoff-Thurston} construct a sequence of closed genus-$2$ surface groups in $\CD_3$ that converge to a group containing a subgroup isomorphic to $\BZ^2$. It follows from Corollary 4.7 of \cite{ZevenbergenPaths1}, however, that if $P:[0,1]\longrightarrow\CD_3$ is continuous and $\BZ^2\leq P(1)$, then $\BZ^2\leq P(t)$ for $t$ close to $1$, so the Kerckhoff-Thurston sequence cannot extend to a path with the same limit.\par

In contrast with the classical study of discrete representations of a fixed group, one can, however, construct paths in each $\CD_n$ along which the isomorphism type is non-constant. Simple examples in $\CD_2$ arise by continuously pinching a simple closed geodesic on the quotient closed hyperbolic surfaces, leaving a cusped surface in the limit, as described in \cite{IANCHAB}. More examples in $\CD_3$ are discussed by the author in \cite{ZevenbergenPaths1}.\par

In this paper, we construct a path $\Phi:[0,1]\longrightarrow \CD_3$ for which the isomorphism type of the groups $\Phi(t)$ fails to be constant as dramatically as possible.

\begin{theorem}[\hypertarget{thm:binary}{Binary Path}]
\label{thm:binary path}
    There exists a path $\Phi:[0,1]\longrightarrow\CD_3$ such that $\Phi(t)$ is isomorphic to $\Phi(s)$ if and only if $t=s$.
\end{theorem}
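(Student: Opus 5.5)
The plan is to build $\Phi$ as a ``binary'' infinite free combination of countably many basic paths, each of which is switched on or off along the lines of the binary expansion of $t$. We then read off the binary digits of $t$ from the isomorphism type of $\Phi(t)$. First we need a basic building block. This is a path $p:[0,1]\to\CD_3$ such that $p(t)$ for $t<1$ is isomorphic to a fixed group $A$, say a closed surface group obtained by pinching, while $p(1)$ is isomorphic to a different group $B$, say the fundamental group of the cusped surface. The example from pinching a simple closed geodesic, which the introduction cites from \cite{IANCHAB}, can serve here after embedding $\Isom^+(\BH^2)$ in $\Isom^+(\BH^3)$. More flexibly, we may take a path whose isomorphism type depends on $t$ only through a single coordinate. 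To make the digits detectable, we use a countable family of pairwise ``independent'' building blocks $p_k$. For example, $p_k(t)$ could be a surface group of genus $g_k$ or its degeneration, with the $g_k$ distinct, so that the free factors arising at stage $k$ are distinguishable via Grushko/Kurosh decompositions.

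Next we invoke the combination theorem for paths, which the abstract promises and which I will assume is proved in the body of the paper. It should state roughly the following. If $P_1,P_2,\dots$ are paths in $\CD_n$ whose limit sets stay inside pairwise disjoint closed balls $B_k$ in $\partial\BH^n$ (Schottky-type configuration, with fundamental domains given by complements of the balls), then $t\mapsto\langle P_k(t)\rangle_k$ is a continuous path in $\CD_n$, and each group splits as the free product $\bigast_k P_k(t)$. Continuity in the Chabauty topology needs the radii of the $B_k$ to shrink to $0$, so that the far factors contribute only elements of large translation length, uniformly in $t$. I would arrange this by conjugating $p_k$ by loxodromics that push everything into tiny balls accumulating at a point. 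The digit encoding is then as follows. For $t\in[0,1]$ with binary expansion $t=\sum_k b_k2^{-k}$, let the $k$th factor $P_k(t)$ be built from $p_k$ reparametrised so that its isomorphism type records $b_k$. Since $b_k$ is not a continuous function of $t$, we instead let $P_k$ run through a $2^k$-periodic family of pinchings. The path $P_k$ degenerates exactly at the dyadic points $j2^{-k}$ and returns, via the other side, to a group of a distinct type. For example, on alternating dyadic intervals we pinch different curves, so that the conjugacy data of the maximal parabolic subgroups and the Kurosh factors encode $\lfloor 2^k t\rfloor \bmod 2$. Dyadic rationals have two expansions. This is handled by the degenerate groups at $j2^{-k}$, which are of a third, cusped type, so each $t$ still receives a unique invariant.

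Finally we show injectivity of isomorphism type. Suppose $\Phi(t)\cong\Phi(s)$. By the Kurosh subgroup theorem the free product decomposition into freely indecomposable factors is unique up to permutation and conjugation. The factors coming from distinct $k$ are distinguished by their genus $g_k$, so the factors match level by level. This gives $P_k(t)\cong P_k(s)$ for all $k$, which recovers all digits, hence $t=s$. I expect the main obstacle to be the second step: making the infinite combination genuinely continuous in the Chabauty topology while the individual factors degenerate. Near a degenerating parameter, short elements appear and disappear, and we must ensure that no new short elements arise from products across factors. I would try a uniform ping-pong estimate: if the balls $B_k$ are separated by hyperbolic distance growing in $k$, then any reduced word involving two or more factors has translation length bounded below uniformly in $t$. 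Chabauty limits of $\Phi(t_i)$ are then determined factorwise by the limits of the $P_k(t_i)$, which converge by continuity of each $p_k$.
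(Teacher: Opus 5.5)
Your combination step is reasonable in outline: with suitable radii, shrinking and increasingly separated balls would satisfy conditions (1) and (2) of the paper's Combination Theorem. The proof breaks at the encoding, because nothing in your construction makes the \emph{abstract} isomorphism type of $P_k(t)$ depend on the digit $b_k$.

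The only concrete mechanism you give is pinching different curves on alternate dyadic intervals. On every open interval of level $k$ the factor is then the same group $\pi_1(S_{g_k})$, whichever curves are pinched at the ends. The data you propose to read the digit from, conjugacy classes of maximal parabolic subgroups, is not preserved by abstract isomorphisms, and the factor has no parabolics at all on the open intervals. The degenerate groups at $j2^{-k}$ are fundamental groups of cusped surfaces, hence free. In a Kurosh/Grushko decomposition they therefore dissolve into infinite cyclic factors that record neither a ``cusped type'' nor the level $k$; matching factors ``by genus'' only sees the closed-surface factors.

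Concretely, as written, every non-dyadic $t$ gives $\Phi(t)\cong\bigast_k\pi_1(S_{g_k})$, so uncountably many parameters yield isomorphic groups. To repair this you would need two things. First, a level-$k$ marker visible to a genuine isomorphism invariant whose value actually differs on the two kinds of intervals; closed surfaces of \emph{different} genera on alternate intervals would do, but that requires constructing a path that changes genus through a degeneration, which you do not supply. Second, a treatment of the dyadic endpoints that does not rely on a ``cusped type'', for example the absence of the level-$k$ marker.

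The paper's construction is built around exactly these two points. Its factors are isomorphism bump paths (Example~\ref{ex:isom bump}). A fixed group is conjugated by dilations so that its ping-pong half-space shrinks to a boundary point and the group converges to $\{\id\}$. Hence a factor is present exactly on an open interval. The level-$n$ marker is a convex cocompact group $\Gamma_{p_n}$ with $H_1\cong\BZ^2\oplus\BZ/p_n\BZ$ (Lemma~\ref{lemma:realizing prescribed homology}). The invariant is $\abtor$, which is additive over free products, so no uniqueness statement for Kurosh decompositions is needed.

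A bump factor cannot be nontrivial at the left endpoint $2m/2^{n+1}$ of a digit-$0$ interval without already being present just before it. So each bump is started at $(2m-\tfrac12)/2^{n+1}$, and a second copy is placed on the overlap $\bigl((2m-\tfrac12)/2^{n+1},\,2m/2^{n+1}\bigr)$. Then $\abtor(\Phi(t))$ has exactly one subgroup of order $p_n$ if and only if the $n$-th digit of $t$ is $0$; otherwise it has either none or a copy of $(\BZ/p_n\BZ)^2$. This overlap trick, not a special degenerate type, is what handles the dyadic points.
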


We call $\Phi$ the ``Binary Path" because it will be constructed so that a binary representation for $t$ can be reconstructed from the torsion of the abelianization of the group $\Phi(t)$, for all $t\in[0,1]$. Here and in what follows, a \textit{path} $P:I\longrightarrow\CD_n$ is always assumed to be continuous, with $I$ an interval. For each $n\geq 3$, one can include $\CD_3\subset\CD_n$ by identifying $\BH^3$ with a $3$-dimensional hyperplane in $\BH^n$, so the theorem also holds for such $\CD_n$. We note that such a path cannot exist in $\CD_2$, however, since there are only countably many isomorphism types of groups in $\CD_2$.\par

The technical tool we use to justify the continuity of the binary path is a combination theorem inspired by those of Klein and Maskit. The Klein-Maskit combination theorems allow one to combine known discrete subgroups of $\Isom^+(\mathbb{H}^n)$, producing a discrete amalgamated free product or HNN-extension of the constituent groups (see \cite{MASK}). These theorems have played a key role in the study of Kleinian groups and hyperbolic manifolds, most notably in the inductive step of Thurston's hyperbolization for Haken manifolds (\cite{THUI}), and many more applications (e.g. \cite{ANCAN}, \cite{ZevenbergenHuang}, \cite{MaskitBoundaries}).\par

The following combination theorem allows one to combine collections of paths in $\CD_n$ to produce new paths.

\begin{theorem}[\hypertarget{thm:combo}{Combination Theorem}]
\label{thm:combo}
    Let $\CP$ be a collection of paths $P:I\longrightarrow\CD_n$ defined on a fixed interval $I$, and for each $P\in \CP$ assume that $A_P\subset\BH^n$ is a non-empty closed subset. If
    \begin{enumerate}
        \item $\exists \;\delta>0$ such that for all $P\neq P'$, $d(A_P,A_{P'})>\delta$ and the closures of $A_P$ and $A_{P'}$ in $\overline{\BH^n}$ are disjoint,
        \item for all $D>0$ and $P\in \CP$, there are finitely many $P'\in \CP$ such that $d(A_P,A_{P'})<D$, and
        \item for all $s\in I$, $P\in \CP$, and $\psi_s\in P(s) -\{\id\}$, we have $\psi_s(\BH^n-A_P)\subset A_P,$
    \end{enumerate}
    then, the group \[\Pi(s):=\left<\bigcup_{P\in\CP}P(s)\right>\] is discrete for all $s\in I$, and $\Pi:I\longrightarrow\CD_n$ is a path in the Chabauty topology on $\CD_n$.
\end{theorem}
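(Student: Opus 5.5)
The argument has two parts: a ping-pong argument for each fixed $s$, and then a verification of continuity in the Chabauty topology. Both rest on a normal form for $\Pi(s)$.

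\textbf{Discreteness.} Fix $s\in I$. By condition 1 the sets $A_P$ are pairwise disjoint. Take a point $x_0\in\BH^n-\bigcup_P A_P$. Such a point exists: pick a point in $\BH^n-A_P$ on a geodesic segment between points of $A_P$ and $A_{P'}$ at distance at least $\delta/2$ from both, and use condition 2 to rule out accumulation of other $A_{P''}$. Consider a reduced word $g=\psi_1\cdots\psi_k$ with $\psi_i\in P_i(s)-\{\id\}$ and $P_i\neq P_{i+1}$.

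Condition 3 gives the ping-pong inclusions $\psi_i(\BH^n-A_{P_i})\subset A_{P_i}$. Since $A_{P_{i+1}}\subset \BH^n-A_{P_i}$, induction yields $g(x_0)\in A_{P_1}$. Hence $g\neq\id$, and $\Pi(s)$ is the free product $\bigast_P P(s)$.

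For the metric estimate, the geodesic from $x_0$ to $g(x_0)$ passes successively through regions separated by the sets $A_{P_i}$, and consecutive sets are more than $\delta$ apart. So $d(x_0,g x_0)\gtrsim (k-1)\delta$. More carefully, I would show $d(x_0,gx_0)\ge \min(d(x_0,A_{P_1}),\dots)+(k-1)\delta$.

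Condition 2 then says that only finitely many $P$ have $A_P$ within distance $R$ of $x_0$. Combined with the word-length bound, any element with $d(x_0,gx_0)<R$ is a word of bounded length in finitely many factors $P_i(s)$, each letter moving $x_0$ a bounded amount. Each $P_i(s)$ is discrete, so only finitely many such elements exist, and $\Pi(s)$ is discrete.

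Torsion-freeness follows from the free product structure: a finite-order element is conjugate into a factor, and each factor is torsion free.

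\textbf{Continuity.} Chabauty convergence $\Pi(s_j)\to\Pi(s)$ amounts to two conditions:
\begin{enumerate}
\item[(a)] every $g\in\Pi(s)$ is a limit of elements $g_j\in\Pi(s_j)$;
\item[(b)] every limit of elements $g_j\in\Pi(s_j)$ (along a subsequence) lies in $\Pi(s)$.
\end{enumerate}
Condition (a) is straightforward. Write $g$ as a finite word in elements of the $P_i(s)$, approximate each letter in $P_i(s_j)$ using continuity of $P_i$, and multiply.

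Condition (b) is the main obstacle. Suppose $g_j\to g$. Then $d(x_0,g_jx_0)$ is bounded. By the uniform (in $s$) length estimate above, $g_j$ has bounded syllable length, and its letters come from finitely many paths $P$. The estimate is uniform because the $A_P$ do not depend on $s$.

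The delicate point is to bound each letter. I would use the geometry to show that each prefix subword also moves $x_0$ a bounded amount: the orbit point of each prefix lies in an $A_{P}$ region crossed by the geodesic from $x_0$ to $g_jx_0$, possibly up to a bounded error coming from the $\delta$-separation. This bounds every letter $\psi_{i,j}$ in $\Isom^+(\BH^n)$.

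After passing to a subsequence in which the word pattern $(P_1,\dots,P_k)$ is fixed, each letter converges. By continuity of each $P_i$, the limit of $\psi_{i,j}$ lies in $P_i(s)$, using the closedness half of Chabauty convergence. Therefore $g$ is a product of elements of the $P_i(s)$, which gives $g\in\Pi(s)$.

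Should the prefix bound be troublesome, I would instead use the disjointness of the closures of the $A_P$ in $\overline{\BH^n}$. If some letter diverged, its image of $x_0$ would escape to a point of $\partial A_{P_i}$. The ping-pong inclusions would then force $g_jx_0$ toward $\overline{A_{P_1}}\cap\partial\BH^n$, contradicting the boundedness of $g_jx_0$.
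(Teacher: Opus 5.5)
Your overall plan is the paper's: free-product normal form via ping-pong, a displacement bound on syllable length that is uniform in $s$, reduction to a fixed pattern of paths, a bound on each letter coming from the disjointness of the closures $\overline{A_P}\subset\overline{\BH^n}$, and extraction of convergent letters using Chabauty continuity of each path. Your fallback argument for bounding letters is fine once the pattern is fixed. It applies north--south dynamics to the first divergent letter $\psi_{i,j}$, acting on the points $\psi_{i+1,j}\cdots\psi_{k,j}x_0\in A_{P_{i+1}}$, whose closure misses the repelling point $\eta^-\in\overline{A_{P_i}}$; this is the same mechanism as Lemma~\ref{lemma:bounded subsets}.

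The gap is the step before it: ``its letters come from finitely many paths $P$.'' You justify this by observing that only finitely many $A_P$ lie within distance $R$ of $x_0$. That controls only the outermost letters, since $g_jx_0\in A_{P_1}$ and $g_j^{-1}x_0\in A_{P_k}$. Nothing you have shown places $A_{P_i}$ near $x_0$ for $1<i<k$. A priori a middle letter's set is only known to be close to its neighbours' sets, and distance between sets satisfies no triangle inequality. Your prefix bound would give it, but it is unproved, and any constant it yields depends on the pair $(A_{P_i},A_{P_{i+1}})$, so it presupposes the finiteness.

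This step is exactly where condition (2) is needed. In Example~\ref{ex:non}, the middle sets $A_{Q_i}=\gamma^i(U)$ drift to the attracting fixed point of $\gamma$. That point lies in $\overline{A_{P_*}}$ and is the repelling point of the divergent letters $\psi\gamma^{-i}$. So without a fixed pattern, your compactness argument has no compact set avoiding $\eta^-$.

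The repair is to use condition (2) in its pairwise form, by chaining, as the paper does via Claim~\ref{claim:nested distances}. Write $h_i=\psi_1\cdots\psi_i$. For each $1\le i<k$, the geodesic from $x_0$ to $gx_0$ contains a subsegment from $h_i(A_{P_i})$ to $h_i(A_{P_{i+1}})$ whose interior lies in $h_i(\BH^n-A_{P_i}-A_{P_{i+1}})$. These regions are pairwise disjoint, lie in $A_{P_1}$, and so miss the initial segment from $x_0$ to $A_{P_1}$. Hence
\[d(x_0,A_{P_1})+\sum_{i=1}^{k-1}d(A_{P_i},A_{P_{i+1}})\le d(x_0,gx_0).\]
This sharpens your $(k-1)\delta$ estimate. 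With $R=\sup_j d(x_0,g_jx_0)$, the first letter's path lies in a finite set. Condition (2) applied to $P_i$ with $D=R+1$ then leaves finitely many choices for $P_{i+1}$, and you induct. Only then fix the pattern and run your fallback argument. Drop the prefix-bound heuristic, which as stated invokes only $\delta$-separation; let the closure hypothesis do the work.

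Your discreteness argument leans on the same two unproved claims (finitely many factors, bounded letters), but it does not need them. By $\delta$-separation, $\bigcup_P A_P$ is closed. Since $\BH^n$ is connected, its complement contains a nonempty open set $U$. Ping-pong gives $g(U)\subset\bigcup_P A_P$ for every $g\neq\id$, so nontrivial elements cannot accumulate at $\id$.
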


Here, $\overline{\BH^n}:=\BH^n\cup S^{n-1}$ is the union of $\BH^n$ with the sphere at infinity. Note that conditions (1) and (3) here are nearly identical to the hypotheses of the classical ping-pong combination theorem (see \cite{MAT} and Subsection \ref{subsec:combo thm}). Indeed, if all of the paths in $\CP$ are constant, then the conclusion of the theorem simply says that $\Pi(s)$ is discrete for all $s$, and condition (2) is unnecessary. The main task for proving the present theorem is therefore to justify \textit{continuity} of $\Pi$. Example \ref{ex:non} will describe a family of paths $\CP$ for which $\Pi(s)$ is discrete for every $s$, but condition (2) of the \hyperlink{thm:combo}{Combination Theorem} is not satisfied, and the map $\Pi$ fails to be continuous.

\vspace{-.1in}

\paragraph{Decompositions and chromatography.} 

Paths in $\CD_n$ that are produced by the \hyperlink{thm:combo}{Combination Theorem} have a natural decomposition into the set of paths $\CP$ that appears in the construction. We consider next the extent to which general paths in $\CD_n$ can be decomposed into simpler pieces. The following definition makes our notion of ``decomposition" precise.

\begin{defn}[Free decomposition]
\label{def:free decomp}
    If $\Pi:I\longrightarrow\CD_n$ is a path for which there exists a collection $\CP$ of paths $P:I\longrightarrow\CD_n$ such that \[\Pi(s)=\bigast_{P\in\CP}P(s)\] for all $s\in I$, we say $\Pi$ \textit{freely decomposes into $\CP$,} and call $\CP$ a \textit{free decomposition} of $\Pi$.
\end{defn}

In an abuse of notation, for a collection $\CG$ of subgroups $G\leq\Isom^+(\BH^n)$ and a group $\Gamma\leq\Isom^+(\BH^n)$, we say \[\Gamma=\bigast_{G\in\CG}G\] if $\Gamma$ contains every subgroup $G\in\CG$ and the natural homomorphism from the free product $\bigast_{G\in\CG}G$ to $\Gamma$ is an isomorphism. In this case, we say that each $G\in\CG$ is a \textit{free factor} of $\Gamma$. \par 

We remark that, indeed, a path $\Pi$ produced using the \hyperlink{thm:combo}{Combination Theorem} freely decomposes into the collection of paths $\CP$ from which it is generated.\par

Recall that the deformations of discrete subgroups of $\Isom^+(\BH^3)$ classically studied by Ahlfors, Bers, Marden, Sullivan, and many others (see \cite{MAT}) are perturbations within a fixed isomorphism class, from the perspective of representations. Accordingly, it is natural to use such deformations as the basis for what will be the atomic pieces of our decompositions. 

\begin{defn}[Isomorphism bump path]
\label{def:Isom bump path}
    A path $B:I\longrightarrow\CD_n$ is called an \textit{isomorphism bump path} if for some discrete group $\Gamma$ and subinterval $J\subset I$, there exists a continuous map $R:J\longrightarrow\Hom(\Gamma,\Isom^+(\BH^n))$ such that $R(t)$ is an injective homomorphism for all $t$, and \[B(t)=
    \begin{cases}
        (R(t))(\Gamma), &t\in J \\
        \{\id\}, &t\in I-J.
    \end{cases}\]
\end{defn}

Here, $\Hom(\Gamma,\Isom^+(\BH^n))$ is equipped with the compact-open topology, which is equivalent to that of pointwise convergence since $\Gamma$ is discrete. Informally, Proposition \ref{prop:J Facts} and Lemmas \ref{lem:injhomo} and \ref{lem:bump vs J} combine to say that if $J\subset\mathrm{int}(I)$, one should envision an isomorphism bump path as tracking perturbations of the group $\Gamma$ coming in from infinity at $\inf J$, and then going back out to infinity at $\sup J$.\par

We now state our main decomposition theorem for paths of convex cocompact groups in $\CD_3$. A discrete group $\Gamma\leq\Isom^+(\BH^n)$ is called convex cocompact if there exists a non-empty closed convex $\Gamma$-invariant subset $C\subset\BH^n$ such that $\Gamma\backslash C$ is compact.

\begin{theorem}[\hypertarget{thm:decomp}{Chromatography}]
\label{thm: decomp}
    If $P:[0,1]\longrightarrow\CD_3$ is a path such that $P(t)$ is convex cocompact for all $t$, then there exists a path $D:[0,1]\longrightarrow\CD_3$ such that \[D(0)=P(0),\hspace{.5in} D(t)\text{ is a free factor of } P(t)\;\;\forall t,\] and $D$ freely decomposes into a finite set of isomorphism bump paths.
\end{theorem}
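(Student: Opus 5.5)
The plan is to build $D$ by a ``chromatography'' procedure. We decompose $P(0)$ into free factors and track each factor forward in time for as long as it persists as a free factor of $P(t)$, cutting it off (sending it to $\{\id\}$) when it would otherwise fail to vary continuously. First, since $P(0)$ is convex cocompact, hence finitely generated, write $P(0)=\bigast_{i=1}^k G_i$ as a Grushko decomposition into freely indecomposable factors together with cyclic factors; this is finite. For each factor $G_i$, we use the results of \cite{ZevenbergenPaths1} on Chabauty paths near a geometrically finite group. The idea is that convex cocompact groups are Chabauty-stable under small perturbations. Concretely, for $t$ near $0$ there should be a continuous family of injective homomorphisms $R_i(t):G_i\to P(t)$ with $R_i(0)$ the inclusion, whose images are free factors with $\bigast_i R_i(t)(G_i)$ a free factor of $P(t)$. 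To obtain this, I would take a compact fundamental-domain-type core for $P(0)$, approximate generators of $P(t)$ near those of $P(0)$, and use a ping-pong argument in the spirit of the \hyperlink{thm:combo}{Combination Theorem} to see that the approximating subgroup splits off as a free factor of $P(t)$.

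Next, we define the maximal interval of existence. For each factor, let $J_i=[0,\tau_i)$ be the largest interval on which the continuation $R_i$ exists as an injective representation onto a free factor of $P(t)$, with the images jointly freely independent. The key dichotomy, to be argued from the convex cocompactness hypothesis at \emph{every} time, is this: as $t\to\tau_i$, either the continuation extends past $\tau_i$, or the image $R_i(t)(G_i)$ leaves every compact set. In the second case the translation lengths of its elements go to infinity in a suitable sense, so $R_i(t)(G_i)\to\{\id\}$ in the Chabauty topology. The alternatives, namely Chabauty convergence to something nontrivial but non-isomorphic, or degeneration to a parabolic or non-discrete limit, are excluded because $P(\tau_i)$ is convex cocompact. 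Concretely, a limit of $R_i(t)(G_i)$ sits inside $P(\tau_i)$, and stability of $P(\tau_i)$ gives a neighbourhood on which the free factor structure is locally constant. Setting $B_i(t)=R_i(t)(G_i)$ on $J_i$ and $\{\id\}$ otherwise then gives an isomorphism bump path, and continuity at $\tau_i$ is exactly this escape to infinity (compare Lemma \ref{lem:bump vs J}). We then set $D(t)=\bigast_i B_i(t)$. This equals $P(0)$ at $t=0$, is a free factor of $P(t)$ by construction, and is continuous since each $B_i$ is and the factors stay separated; the separation needs a local ping-pong configuration, as in the proof of the \hyperlink{thm:combo}{Combination Theorem}, to pass continuity to the free product.

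The main obstacle I expect is the dichotomy step. We must show that a free factor tracked continuously cannot Chabauty-converge to a nontrivial subgroup of $P(\tau)$ while losing injectivity or free-factor status. For example, two factors might collide and merge, or part of a factor might escape while the rest stays. To handle this, I would first refine the decomposition so that each tracked piece is freely indecomposable. Then I would argue via a 3-dimensional rigidity input: freely indecomposable convex cocompact groups correspond to compact cores with incompressible boundary, and these cannot partially escape because of the geometry of their convex hulls, which have bounded diameter modulo the group. If partial escape still occurs, I would shrink the interval and re-decompose at the cut time. Since only the factors present at time $0$ are tracked, finiteness is automatic.
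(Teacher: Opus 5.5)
Your construction has a genuine gap: you fix a Grushko decomposition $P(0)=\bigast_i G_i$ in advance and track each factor separately. Even granting your per-factor dichotomy, the free product of the tracked factors need not converge to the free product of their limits, because products of elements from different escaping factors can survive.

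The paper's own Example \ref{ex:decomp non-ex}, restricted to $[1/2,1]$ and viewed in $\CD_3$ via $\BH^2\subset\BH^3$, breaks your $D$. Start at $\Gamma=\langle A,B,C\rangle$ with Grushko factors $\langle A\rangle,\langle B\rangle,\langle C\rangle$. Along the path:
\begin{itemize}
\item $B$ is tracked constantly and $C$ is tracked to $C_t$;
\item $A=DC^{-1}B^{-1}$ is tracked to $DC_t^{-1}B^{-1}$;
\item the factors $\langle A\rangle$ and $\langle C\rangle$ both escape at $t=1$, and each gives a continuous isomorphism bump path;
\item for $t<1$ the three images are jointly free with free product $P(t)$.
\end{itemize}
Your $D$ therefore equals $P(t)$ for $t<1$, but $D(1)=\langle B\rangle$, whereas $P(t)\to P(1)=\langle B,D\rangle$. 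So $D$ is discontinuous, and no local ping-pong configuration can make ``the factors stay separated.'' Refining to freely indecomposable pieces does not help, since for free groups those are cyclic and the problem is the choice of basis. ``Re-decomposing at the cut time'' does not help either, since no single factor partially escapes here. The splitting of $\Gamma$ that works for this path is $\langle B,D\rangle*\langle C\rangle$, so the splitting must depend on the path.

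The missing idea is to let the path choose the splitting. The paper proceeds as follows:
\begin{itemize}
\item It tracks all of $P(0)$ via $J_{0,t}$ and takes the first time $s$ at which some element escapes.
\item It shows that the survivors $H_s=J_{s,0}(J_{0,s}^*(P(0)))$ form a free factor of $P(0)$ (Lemma \ref{Lemma-PathFreeFactorIso}).
\item A complementary factor $K_s$ then escapes entirely at $s$ and gives one bump path $J_{0,t}^*(K_s)$.
\item It recurses on $H_s$, whose rank drops by Grushko.
\end{itemize}
That the survivors form a free factor is the real geometric content, and it is not a ping-pong or Marden-stability statement. It rests on two facts. First, for $t$ near $s$, a large neighbourhood of the thick convex core of $J_{s,t}(P(s))\backslash\BH^3$ embeds in $P(t)\backslash\BH^3$ (Lemma \ref{lemma-PathTCCEmbed}). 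Second, such an embedding forces a free factor (Proposition \ref{prop-CCEmbedFactor}, via Bowditch's $\xi$-handles).

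Your assertions that the approximating subgroup ``splits off as a free factor of $P(t)$'' by ping-pong, and that stability of $P(\tau_i)$ makes the free-factor structure locally constant, are exactly this step and are unsupported as stated. $P(t)$ can acquire new factors arriving from infinity at any time, and Marden stability says nothing about how $J_{\tau,t}(P(\tau))$ sits inside $P(t)$. To run the induction you would also need the tracked subgroups to stay finitely generated and convex cocompact through the escape time (Lemma \ref{lem: J is CC}, via Thurston's covering theorem and Susskind's intersection theorem). Finally, the conclusion that $D(t)$ is a free factor of $P(t)$ for all $t$ is not automatic ``by construction.'' The paper obtains it by applying the decomposition to the reversed path on $[0,s]$.
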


We refer to Theorem \ref{thm: decomp} as ``chromatography," a technique from chemistry used to separate the components of a chemical mixture. In paper chromatography, a solvent pulls a mixture of chemicals along a segment of paper, and different components of the mixture are deposited at different points along the paper. Here, one should informally think of the path $D$ as pulling the group $P(0)$ through the groups $P(t)$, and the free decomposition of $D$ as separating $P(0)$ into free factors that diverge to infinity at different points as they are pulled along the interval. More precisely, the path $D$ freely decomposes into isomorphism bump paths $B_1,...,B_k:[0,1]\longrightarrow\CD_3$, for each of which there is some $t_i\in(0,1]$ such that $B_i(t)\neq\{\id\}$ if and only if $t<t_i$. The group $P(0)$ is then a freely generated combination, or ``mixture," of the factors $B_i(0)$, and the points $t_i$ (where $B_i(t)\rightarrow\{\id\}$ in the Chabauty topology) are where different factors of this mixture diverge to infinity, analogous to the points on a piece of chromatography paper where components of a chemical mixture are left behind. A cartoon of this is shown in Figure \ref{fig:chromatography}.\par

\begin{figure}[h]
    \centering
    \includegraphics[width=1\linewidth]{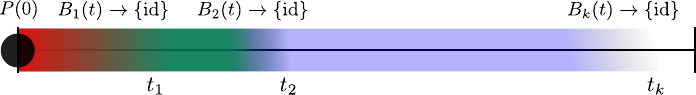}
    \caption{A cartoon of the Chromatography Theorem. At each time $t_i$, one of the isomorphism bump paths $B_i$ in the free decomposition for $D$ converges to the trivial group. }
    \label{fig:chromatography}
\end{figure}

We remark that the hypothesis in the \hyperlink{thm:decomp}{Chromatography Theorem} that each group $P(t)$ is convex cocompact is essential. Indeed, the example in $\CD_2$ of pinching a simple closed curve on a surface produces a path of closed surface groups that converges to the fundamental group of a \textit{cusped} surface, which is free. Closed surface groups, however, do not admit a non-trivial free splitting, so the theorem does not hold for arbitrary paths of geometrically finite groups (see also \cite[Thm. C]{ZevenbergenPaths1}). In $3$-dimensions degenerate ends introduce similar challenges to those posed by cusps. \par

One may hope that it is possible to improve the \hyperlink{thm:decomp}{Chromatography Theorem} by showing that every path of convex cocompact groups $P:I\longrightarrow\CD_3$ freely decomposes into a collection of isomorphism bump paths. Indeed, the \hyperlink{thm:binary}{Binary Path} and all other paths that our \hyperlink{thm:combo}{Combination Theorem} produces from a collection of isomorphism bump paths will have such a decomposition. The following example will show, however, that this is not the case.

\vspace{.1in}

    \noindent\textbf{\hyperref[ex:decomp non-ex]{Example~\ref*{ex:decomp non-ex}.}}
\textit{There exists a convex cocompact path $P:[0,1]\longrightarrow\CD_2$ that does not freely decompose into a collection of isomorphism bump paths.}

\vspace{.1in}

\noindent Here, a path $P:I\longrightarrow\CD_n$ is called \textit{convex cocompact} if $P(t)$ is convex cocompact for all $t\in I$. In brief, Example \ref{ex:decomp non-ex} is constructed from a rank-$3$ free group by sending a generator from one free basis to infinity as $t\rightarrow 0$, and then sending a generator from a \textit{different} free basis to infinity as $t\rightarrow 1$.

\vspace{-.1in}

\paragraph{Organization.} Section \ref{sec:combos} focuses on the \hyperlink{thm:combo}{Combination Theorem}, with Subsection \ref{subsec:combo thm} dedicated to the proof, and in Subsection \ref{subsec:non-ex}, we produce a non-example that demonstrates the necessity of some hypotheses. The \hyperlink{thm:binary}{Binary Path} is constructed in Section \ref{sec:binary path}, with Subsection \ref{subsec:specifying homology} containing a topological lemma, and Subsection \ref{subsec:binary construction} completing the construction. In Section \ref{sec:decomp} we turn our attention towards decompositions of paths. Subsection \ref{subsec:path machinery} reviews and expands upon machinery for studying paths in $\CD_n$ that was developed by the author in \cite{ZevenbergenPaths1}. The proof of the \hyperlink{thm:decomp}{Chromatography Theorem} is carried out in Subsection \ref{subsec:decomp thm}, which starts with a number of technical lemmas needed to justify the free splittings that appear in the theorem. We end with Subsection \ref{subsec:decomp counterex}, which contains Example \ref{ex:decomp non-ex}, showing that not every path of convex cocompact groups freely decomposes into isomorphism bump paths.

\vspace{-.1in}

\paragraph{Acknowledgments.} The author thanks Ian Biringer for introducing him to the topic, for many helpful discussions, and for feedback on a draft of this paper. The author would also like to thank the organizers and participants of the Topology Students Workshop 2026 for conversations that greatly improved the introduction section of this paper. Finally, the author thanks Eva Bayer for sharing her chromatography expertise.

\section{Combinations of paths}
\label{sec:combos}

\subsection{The combination theorem}
\label{subsec:combo thm}

We start by recalling some details from classical combination theorems. Assume $\CG$ is a collection of torsion free discrete subgroups of $\Isom^+(\BH^n)$, for some $n\geq2$. If $\{A_\Gamma\}_{\Gamma\in\CG}$ is a collection of pairwise disjoint closed subsets of $\BH^n$ such that $\BH^n-\bigcup_{\Gamma\in\CG} A_\Gamma$ is a non-empty open set and \begin{equation}
\label{eqn:discreteness condition}
    \psi(\BH^n-A_\Gamma)\subset A_\Gamma,
\end{equation} for each $\Gamma\in\CG$ and $\psi\in\Gamma-\{\id\}$, 
then it follows from standard arguments in \cite{MacbeathFreeProd} and \cite{MASK} that the group $\Lambda$ generated by the union of the subgroups in $\CG$ is discrete and splits as a free product: \[\Lambda\cong\bigast_{\Gamma\in\CG}\Gamma.\] \par

Identifying $\Lambda$ with this free product, assume that $\psi:=\gamma_k\cdot...\cdot \gamma_1$ is a reduced word, with each $\gamma_i\in\Gamma_i\in\CG$, and $k\geq 2$. For each $i$, define \[w_i:=\gamma_k\cdot...\cdot\gamma_i\hspace{.2in} \text{ and }\hspace{.2in} A_i:=A_{\Gamma_{i}}.\] A classical ping-pong argument shows that for every $q\in\BH^n-\bigcup_{\Gamma\in \CG} A_\Gamma$, we have 

\begin{align}
\label{eqn:nested inclusions}
    \psi(q)\in w_2(A_1)\subset w_2(\BH^n-A_2)\subset...\subset w_i&(A_{i-1})\subset w_i(\BH^n-A_i)\subset\\&...\subset\gamma_k(A_{k-1})\subset \gamma_k(\BH^n- A_k)\subset A_k\nonumber
\end{align}

We now use this sequence of nested inclusions to obtain the following inequality.

\begin{claim}
\label{claim:nested distances}
    \[d(q,\gamma_k(A_{k-1}))+d(\gamma_1(q),A_2)+\sum_{1<i<k}d(\gamma_{i}(A_{i-1}),A_{i+1})\leq 2 d(q,\psi(q))\]
\end{claim}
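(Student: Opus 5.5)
The plan is to compare everything with a single geodesic segment $\sigma:[0,L]\to\BH^n$ from $q$ to $\psi(q)$, where $L=d(q,\psi(q))$. The inclusion chain (\ref{eqn:nested inclusions}) says that $\sigma$ starts outside a nested family of sets and ends inside all of them. The goal is to show that each term on the left of the claim is at most the length of a certain subinterval of $[0,L]$, and that every point of $[0,L]$ lies in at most two of these subintervals. That gives the factor $2$.

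\emph{Step 1: set up the nested sets.} Put $U_i:=w_{i+1}(A_i)$ for $1\le i\le k$, with $w_{k+1}:=\id$, so $U_k=A_k$. Put $V_i:=w_i(\BH^n-A_i)$ for $2\le i\le k$. Then (\ref{eqn:nested inclusions}) reads
\[\psi(q)\in U_1\subset V_2\subset U_2\subset\dots\subset U_{k-1}\subset V_k\subset U_k.\]
Since $q\notin A_k=U_k$, the point $q$ lies outside all of these sets. Next rewrite each term of the claim using that each $w_j$ is an isometry.
\begin{itemize}
\item Since $w_{i+1}\gamma_i=w_i$ and $\BH^n-V_{i+1}=w_{i+1}(A_{i+1})$, the middle term is $d(\gamma_i(A_{i-1}),A_{i+1})=d(U_{i-1},\BH^n-V_{i+1})$ for $1<i<k$.
\item Since $\gamma_k(A_{k-1})=U_{k-1}$, the first term is $d(q,\gamma_k(A_{k-1}))=d(q,U_{k-1})$.
\item Since $w_2\gamma_1=\psi$ and $w_2(A_2)=\BH^n-V_2$, the second term is $d(\gamma_1(q),A_2)=d(\psi(q),\BH^n-V_2)$.
\end{itemize}
Each term is therefore a distance between the complement of one set in the chain and a set two steps further in.

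\emph{Step 2: entrance times along $\sigma$.} For each set $X$ in the chain, let $\tau_X:=\inf\{t:\sigma([t,L])\subset X\}$ be the time of its last entrance. The chain is nested, so $X\subset Y$ implies $\tau_Y\le\tau_X$, and the times $\tau$ increase as we move inward along the chain. Moreover $\sigma(\tau_X)\in\overline X$. For times just before $\tau_X$ there are points of $\sigma$ outside $X$; this holds for $X\ne U_1$ and requires $\tau_X>0$, which follows from $q\notin X$.

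Attach an interval to each term.
\begin{itemize}
\item To the term for index $i$, attach $I_i:=[\tau_{V_{i+1}},\tau_{U_{i-1}}]$.
\item To the first term, attach $[0,\tau_{U_{k-1}}]$.
\item To the second term, attach $[\tau_{V_2},L]$.
\end{itemize}
The endpoints of $I_i$ lie within $\epsilon$ of points of $\sigma$ in $\BH^n-V_{i+1}$ and in $\overline{U_{i-1}}$ respectively. Hence the length of each interval bounds its term, up to $\epsilon$ and the closure issue. For the endpoint terms the same holds, using $\sigma(0)=q$ and $\sigma(L)=\psi(q)$.

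\emph{Step 3: overlap count.} List the entrance times in order:
\[0\le\tau_{V_k}\le\tau_{U_{k-1}}\le\tau_{V_{k-1}}\le\dots\le\tau_{V_2}\le\tau_{U_1}.\]
Each interval spans exactly the stretch from one "$V$" time to the "$U$" time two steps later in this ordered list, with the natural convention at the two ends. Consecutive intervals therefore overlap, but any three are arranged so that the first and third do not overlap. So each point of $[0,L]$ lies in at most two intervals, and the sum of their lengths is at most $2L$.

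The main obstacle I expect is bookkeeping rather than ideas. The crossing points may land on boundaries, and the $A_\Gamma$ are only closed, which is why I phrase everything with infima and $\epsilon$-approximations. The other point needing care is the degenerate case $k=2$, where there are no middle terms and the two endpoint intervals should simply be checked to overlap at most once.
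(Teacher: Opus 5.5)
Your proposal is correct and is essentially the paper's proof. The paper also works along the geodesic from $q$ to $\psi(q)$ and attaches to the three kinds of terms exactly your subsegments: from $q$ to $\partial\gamma_k(A_{k-1})$, from $\partial w_{i+1}(\BH^n-A_{i+1})$ to $w_i(A_{i-1})$, and from $\partial w_2(\BH^n-A_2)$ to $\psi(q)$. It likewise gets the factor $2$ from the disjointness of subsegments whose indices differ by at least two.

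Your last-entrance times just make the choice of subsegments explicit. Note that since $\BH^n-V_i$ is closed and $q\notin V_i$, one has $\sigma(\tau_{V_i})\notin V_i$ outright. So you need neither the $\epsilon$'s nor the claim that $q\notin X$ forces $\tau_X>0$, which can fail for open $X$.
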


\begin{proof}
    Let $\alpha$ be the geodesic segment between $q$ and $\psi(q)$. By Equation \ref{eqn:nested inclusions}, there is a subsegment $\alpha_k\subset\alpha$ with endpoints on $q$ and $\partial \gamma_k(A_{k-1})$, which has length at least $d(q,\gamma_k(A_{k-1}))$. Similarly, there is a subsegment $\alpha_1\subset\alpha$ between $\psi(q)$ and $\partial w_2(\BH^n-A_2)$, with \[\length(\alpha_1)\geq d(\psi(q),\partial w_2(\BH^n-A_2))=d(\gamma_1(q),\partial(\BH^n-A_2))=d(\gamma_1(q),A_2),\] where the last equality follows from noting that $\gamma_1(q)\in A_1\subset\BH^n-A_2$.\par

    For $1<i<k$, there is a subsegment $\alpha_i\subset\alpha$ with endpoints on $\partial w_{i+1}(\BH^n-A_{i+1})$ and $w_{i}(A_{i-1})$. Similar to above, we then have that for $1<i<k$, \[\length(\alpha_i)\geq d(\gamma_i(A_{i-1}),\partial(\BH^n-A_{i+1}))=d(\gamma_i(A_{i-1}),A_{i+1}).\] The claim now follows by noting that $\alpha_i\cap\alpha_j=\varnothing$ if $|i-j|\geq2$.
\end{proof}

Application of Claim \ref{claim:nested distances} will be aided by the following lemma.

\begin{lemma}
\label{lemma:bounded subsets}
    Suppose $A,B,C\subset\BH^n$ are nonempty closed subsets whose closures in $\overline{\BH^n}$ satisfy $\overline{B}\cap(\overline{A}\cup\overline{C})=\varnothing$. Then, for all $D>0$, the set of elements $\psi\in\Isom^+(\BH^n)$ such that \[d(C,\psi(A))\leq D\hspace{.25in}\text{and}\hspace{.25in}\psi^i(A)\subset B\] for all $i\neq0$ is bounded in $\Isom^+(\BH^n)$.
\end{lemma}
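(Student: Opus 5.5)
The plan is to argue by contradiction, using only the cases $i=1$ and $i=-1$ of the hypothesis $\psi^i(A)\subset B$ together with compactness of $\overline{\BH^n}$. Suppose the set in question is unbounded. Then there is a sequence $\psi_j$ in it leaving every compact subset of $\Isom^+(\BH^n)$. Fix a basepoint $o\in\BH^n$. Since $d(o,\psi_j(o))=d(o,\psi_j^{-1}(o))\to\infty$, after passing to a subsequence I may assume $\psi_j^{-1}(o)\to\eta$ for some point $\eta\in S^{n-1}$.

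\emph{Step 1: the near-contact points stay bounded.} Since $d(C,\psi_j(A))\le D$, choose $c_j\in C$ and $a_j\in A$ with $d(c_j,\psi_j(a_j))\le D+1$. Because $\psi_j(A)\subset B$, we have $\psi_j(a_j)\in B$. If the $c_j$ were unbounded, then along a subsequence $c_j$ would converge to a point $\zeta\in\overline{C}\cap S^{n-1}$. Points at uniformly bounded hyperbolic distance from a sequence converging to a boundary point converge to the same boundary point, so $\psi_j(a_j)\to\zeta$ as well, giving $\zeta\in\overline{B}$. This contradicts $\overline{B}\cap\overline{C}=\varnothing$. Hence, after passing to a subsequence, the $c_j$ lie in a compact set, and therefore so do the points $p_j:=\psi_j(a_j)$, say $d(o,p_j)\le R$.

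\emph{Step 2: locate $\eta$ in $\overline{A}$.} We have $a_j=\psi_j^{-1}(p_j)$ and
\[d\bigl(a_j,\psi_j^{-1}(o)\bigr)=d(p_j,o)\le R.\]
Since $\psi_j^{-1}(o)\to\eta\in S^{n-1}$, the same bounded-distance principle gives $a_j\to\eta$. As every $a_j$ lies in $A$, this yields $\eta\in\overline{A}$.

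\emph{Step 3: locate $\eta$ in $\overline{B}$.} Fix any point $a\in A$; one exists since $A$ is nonempty. By the case $i=-1$ of the hypothesis, $\psi_j^{-1}(a)\in B$ for every $j$. Moreover $d(\psi_j^{-1}(a),\psi_j^{-1}(o))=d(a,o)$ is constant, so $\psi_j^{-1}(a)\to\eta$. Thus $\eta\in\overline{B}$. Combined with Step 2 this gives $\eta\in\overline{A}\cap\overline{B}$, contradicting the hypothesis $\overline{B}\cap\overline{A}=\varnothing$. Therefore the set is bounded.

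The only real subtlety is Step 1, namely ruling out that the contact between $C$ and $\psi_j(A)$ escapes to infinity. This is exactly where the disjointness of $\overline{B}$ and $\overline{C}$ at infinity is used, and not merely a positive distance between $B$ and $C$. The underlying fact, used in all three steps, is standard: if $x_j\to\zeta\in S^{n-1}$ and $d(x_j,y_j)$ stays bounded, then $y_j\to\zeta$. It can be checked directly in the ball model, since a hyperbolic ball of fixed radius centered near the boundary has Euclidean diameter tending to $0$.
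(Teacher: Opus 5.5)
Your proof is correct, and it takes a somewhat different route from the paper's. The paper uses the hypothesis for all $i\neq 0$. It first rules out elliptic elements and lets $p,q$ be limits of the attracting and repelling fixed points of $\psi_j$. It places both in $\overline{B}$ by iterating $\psi_j^{\pm i}$ on $A$. It then asserts that an unbounded sequence must push $\psi_j(A)$ off every compact set, because $q\notin\overline{A}$. The contradiction comes from the compact set $\{x\in B : d(x,C)\le D\}$, which is compact because $\overline{B}\cap\overline{C}=\varnothing$.

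Your Step 1 plays the role of that compact set. In place of the fixed-point analysis you use the point $\eta=\lim\psi_j^{-1}(o)$. You place $\eta$ in $\overline{B}$ directly from $\psi_j^{-1}(A)\subset B$, and in $\overline{A}$ from the bounded contact points $\psi_j(a_j)$.

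This gains you several things. You never need to classify the $\psi_j$ or exclude elliptics. You avoid a convergence-dynamics fact the paper uses without proof: that the limit of the repelling fixed points is exactly the point away from which $\psi_j$ contracts uniformly toward the attracting limit. You also use only $i=\pm1$, so you prove a stronger statement, with $\psi^{\pm1}(A)\subset B$ in place of all powers.

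The paper's fixed-point formulation is natural in the ping-pong setting, where all powers satisfy the condition anyway. Your argument is more elementary and self-contained: it rests only on the principle that points at bounded distance from a sequence converging to the sphere at infinity converge to the same boundary point.
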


\begin{proof}

    Assume that $\{\psi_j\}_{j\in\BN}\subset\Isom^+(\BH^n)$ is a sequence of isometries such that $\psi_j^i(A)\subset B$ for all $j$ and all $i\neq 0$. Since $A\cap B=\varnothing$, each $\psi_j$ must be parabolic or hyperbolic. Up to passing to a subsequence, we can assume that the attracting and repelling fixed points of $\psi_j$ (which are the same if $\psi_j$ is parabolic) converge to points $p,q\in\partial\overline{\BH^n}$, respectively, as $j\rightarrow\infty$. Since $\psi_j^i(A)\subset B$ for all $j$ and $i\neq 0$, we must have $p,q\in \overline{B}$.\par

    Suppose that the sequence $\{\psi_j\}$ is unbounded. It follows that if $K\subset\BH^n$ is any compact subset, we have 
    \begin{equation}
    \label{eqn:empty intersection}
        \psi_j(A)\cap K=\varnothing
    \end{equation} for sufficiently large $j$, since $q\not\in \overline{A}$. Defining \[K:=\{x\in B\;|\;d(x,C)\leq D\},\] observe that $K$ is compact, since $\overline{B}\cap\overline{A}=\varnothing$, so the conclusion follows from Equation \ref{eqn:empty intersection}.
\end{proof}

We now proceed with the proof of the Combination Theorem, Theorem \ref{thm:combo}. The primary goal of the proof will be to justify the Chabauty topology continuity of the path $\Pi:I\longrightarrow\CD_n$ that the theorem produces. The following definition characterizes convergence in the Chabauty topology on $\CD_n$.

\begin{defn}[Chabauty topology]
\label{def:Chabauty}
    A sequence $\{\Gamma_i\}\subset\CD_n$ converges to $\Gamma$ in the\textit{ Chabauty topology} on $\CD_n$ if the following hold:
\vspace{-.1in}
    \begin{enumerate}[{Condition} 1:]
        \item If a sequence of elements $\psi_i\in\Gamma_i$ has a subsequence converging to $\psi\in\Isom^+(\BH^n)$, then $\psi\in\Gamma$.
        \vspace{-.1in}
        \item For each $\psi\in\Gamma$, there exists a sequence of elements $\psi_i\in\Gamma_i$ with $\psi_i\rightarrow\psi$ in $\Isom^+(\BH^n)$.
    \end{enumerate}
\end{defn}

\begin{proof}[Proof of the \hyperlink{thm:combo}{Combination Theorem}]

That the group $\Pi(s)$ is discrete for all $s\in I$ follows immediately from classical combination results (see \cite{MAT}), as appears in the discussion preceding Claim \ref{claim:nested distances}. Indeed, condition (1) implies that the complement of $\bigcup_{P\in\CP}A_P$ is non-empty and open, and condition (3) exactly matches with Equation \ref{eqn:discreteness condition}. Recall that it also follows that each group $\Pi(s)$ is isomorphic to the free product $\bigast_{P\in\CP}P(s)$.\par

Fixing $s\in I$, it remains to check that $\Pi(t)\rightarrow\Pi(s)$ in the Chabauty topology on $\CD_n$ as $t\rightarrow s$. That the second condition for convergence in $\CD_n$ as in Definition \ref{def:Chabauty} is satisfied follows immediately from the continuity of each of the constituent paths $P\in\CP$, from which it follows that each $\varphi\in P(s)$ is the limit of elements of each $P(t)$ as $t\rightarrow s$. \par

To address the first condition for convergence in the Chabauty topology, suppose that $t_i\rightarrow s$ as $i\rightarrow\infty$ and that a sequence of elements $\psi_i\in\Pi(t_i)$ converges to $\psi\in\Isom^+(\BH^n)$. Our goal is to show that $\psi\in\Pi(s)$. We consider each $\psi_i$ as the reduced word\[\psi_i=\gamma_{i,k_i}\cdot...\cdot\gamma_{i,1}\] in $\bigast_{P\in\CP}P(t_i)$ under the natural identification of $\Pi(t_i)$ with this free product, with each $\gamma_{i,j}\in P_{i,j}(t_i)$. Let $A_{i,j}$ denote $A_{P_{i,j}}$ and fix an element $q\in\BH^n-\bigcup_{P\in\CP}A_P$. \par

We first suppose that there are infinitely many $i$ such that $k_i=1$, in which case we may pass to a subsequence so that each $\psi_i\in P_i(t_i)$ for some $P_i\in\CP$. Since each $\psi_i(q)$ is contained in the closed disjoint union $\bigcup_{P\in\CP}A_P$, there must be some $P_*\in\CP$ such that $\psi(q)\in A_{P_*}$. It then follows that $\psi_i(q)\in A_{P_*}$ for $i$ sufficiently large, which implies that $P_i=P_*$ for such $i$. Since $P_*(t_i)\rightarrow P_*(s)$ in the Chabauty topology, the first condition in Definition \ref{def:Chabauty} implies that $\psi\in P_*(s)$, as desired.\par 

We now assume that $k_i\geq 2$ for all but finitely many $i$, in which case we may pass to a subsequence so that $k_i\geq 2$ for all $i$, which will allow us to apply Claim \ref{claim:nested distances}. Define \[D:=2\cdot d(q,\psi(q))+1.\] Claim \ref{claim:nested distances} implies that for $i$ sufficiently large, we have 
\begin{equation}
    \label{eqn:nested distances proof version}
    d(q,\gamma_{i,k_i}(A_{i,k_i-1}))+d(\gamma_{i,1}(q),A_{i,2})+\sum_{1<j<k_i}d(\gamma_{i,j}(A_{i,j-1}),A_{i,j+1})< D.
\end{equation}
Since $d(A_P,A_{P'})>\delta$ for all $P\neq P'$ by condition (1), all terms on the left-hand side of Equation \ref{eqn:nested distances proof version}, except possibly the first, are greater than $\delta$. Hence, we have $(k_i-1)\delta<D$, so we can conclude that $k_i\leq1+D/\delta$ for $i$ sufficiently large. Thus, passing to a subsequence allows us to assume that there is some fixed $k$ such that $k_i=k$ for all $i$, and that Equation \ref{eqn:nested distances proof version} holds for all $i$. \par

Since $\bigcup_{P\in\CP}A_P$ is a closed union of disjoint sets, and $\psi(q)$ is the limit of the points $\psi_i(q)$ that are contained in this union, there exists some element $P_*\in\CP$ such that $\psi(q)\in A_{P_*}$. Further, condition (1) implies that we must have $\psi_i(q)\in A_{P_*}$, and hence $P_{i,k}=P_*$, for $i$ sufficiently large. Since each term on the left-hand side of Equation \ref{eqn:nested distances proof version} must be less than $D$, it follows that we have $d(A_{i,j},A_{i,j+1})<D$ for all $j<k$. Given that $A_{i,k}=A_{P_*}$, condition (2) implies that there are only finitely many choices for each $P_{i,{k-1}}$, independent of $i$. Continuing inductively, there is similarly a finite set of possibilities for $P_{i,j}$ for each $j$. Hence, passing to a further subsequence allows us to assume that for each $j$, there is a fixed $P_j\in\CP$ such that $P_{i,j}=P_j$ for all $i$.\par

We now simplify notation so that $\psi$ is a limit of reduced words \[\psi_i=\gamma_{i,k}\cdot...\cdot\gamma_{i,1}\in\Pi(t_i), \hspace{.2in}\gamma_{i,j}\in P_j(t_i)\;\;\forall j,\hspace{.2in}\text{with}\hspace{.2in} A_j:=A_{P_j}.\] We claim that the sequence $\{\gamma_{i,j}\}_{i}$ is bounded for all $j$. Indeed, for all $j$ with $1<j<k$, Equation \ref{eqn:nested distances proof version} implies that $d(\gamma_{i,j}(A_{j-1}),A_{j+1})<D$ for all $i$, so it follows from condition (1) and Lemma \ref{lemma:bounded subsets} that the sequences $\{\gamma_{i,j}\}_i$ are bounded. We also have from Equation \ref{eqn:nested distances proof version} that $d(q,\gamma_{i,k}(A_{k-1}))<D$ and $d(\gamma_{i,1}(q),A_2)<D$ for all $i$, from which we also conclude from Lemma \ref{lemma:bounded subsets} that the sequences $\{\gamma_{i,k}\}_i$ and $\{\gamma_{i,1}\}_i$ are bounded.\par

By passing to a subsequence, we can then conclude that for each $j$, there exists $\gamma_j\in\Isom^+(\BH^n)$ such that $\gamma_{i,j}\rightarrow\gamma_j$ as $i\rightarrow\infty$. We have $\gamma_{i,j}\in P_j(t_i)$ for all $i,j$, so it follows from Chabauty topology continuity of each path $P_j$ that $\gamma_j\in P_j(s)$. Hence, we conclude that \[\psi_i=\gamma_{i,k}\cdot...\cdot\gamma_{i,1}\longrightarrow \gamma_k\cdot...\cdot\gamma_1\in\Pi(s)\] as $i\rightarrow\infty$, as was to be shown.
\end{proof}

\subsection{A non-example with weakened hypotheses}
\label{subsec:non-ex}

The goal for this subsection is to construct Example \ref{ex:non}, which demonstrates the necessity for condition (2) of the \hyperlink{thm:combo}{Combination Theorem}. We start with Example \ref{ex:trivial infinitely often}. The construction in the proof of Example \ref{ex:trivial infinitely often} that we provide is more complicated than is strictly necessary to prove the example statement, but the technique described will be used in the later construction of Example \ref{ex:non}.

\begin{example}[Isomorphism type changing infinitely many times, Convex cocompact]
\label{ex:trivial infinitely often}
    There exists a path $P:[0,1]\longrightarrow\CD_2$ such that $P(t)$ is non-trivial for almost every $t\in[0,1]$ and is convex cocompact for all $t$, but there are infinitely many $t\in[0,1]$ such that $P(t)=\{\id\}$.
\end{example}

\begin{proof}

Let $\{\psi_i\}_{i\geq 1}$ be a sequence of hyperbolic isometries in $\Isom^+(\BH^2)$ such that each $\psi_i$ has translation length $L_i$ with $L_i\rightarrow\infty$ as $i\rightarrow\infty$. For $t>0$, define $\psi_i^t$ to be the hyperbolic isometry with translation length $L_i/t$ and same attracting and repelling fixed points as $\psi_i$. In particular, $\psi_i^1=\psi_i$.\par

We first define a map $Q:[1,\infty)\longrightarrow\CD_2$ by 
    \[Q(t):=
    \begin{cases}
        \langle \psi_i^{t-2i+1}\rangle, &2i-1<t\leq2i \text{ for some }i\geq 1\\
        \langle \psi_i^{2i+1-t}\rangle, &2i\leq t <2i+1\text{ for some }i\geq 1\\
        \{\id\}, &\text{else}
    \end{cases}\]
Note that $Q(2i)=\langle \psi_i\rangle$ for all $i\geq 1$. Informally, on each interval $(2i-1,2i]$, $Q(t)$ is generated by a hyperbolic isometry whose translation length is decreasing from infinity, and converging to $L_i$. On each $[2i,2i+1)$, $Q(t)$ is generated by a hyperbolic isometry whose translation length increases from $L_i$ and diverges to $\infty$. \par

Since the isometries $\psi_i^t$ vary continuously in $\Isom^+(\BH^2)$ as $t>0$ varies, $Q(t)$ is continuous on each interval $(2i-1,2i+1)$. Further, as $t\rightarrow 2i\pm1$ for any $i$, the translation length of the generators of the cyclic groups $Q(t)$ diverge to $\infty$, so $Q(t)\rightarrow\{\id\}$. Thus, $Q$ is a path in $\CD_2$.\par

We now define $P:[0,1]\longrightarrow\CD_2$ by \[P(t):=
\begin{cases}
    Q(1/t), &0<t<1\\
    \{\id\},&\text{else}.
\end{cases}\]
Since it will be important for Example \ref{ex:non}, we note that 
\begin{equation}
    \label{eqn:equality at times}
    P(1/2i)=\langle\psi_i\rangle, \hspace{.2in}\forall\; i\geq 1
\end{equation}

We now claim that $P$ is continuous, which will complete the proof since $P(1/(2i-1))=\{\id\}$ for each $i\geq 1$. The continuity of $P$ on the interval $(0,1]$ follows from the continuity of $Q$, so it remains to show that $P(t)\rightarrow\{\id\}$ as $t\rightarrow 0$. This however follows from noting that for all $i\geq 1$ and $t<1/(2i-1)$, any $\varphi\in P(t)-\{\id\}$ is a hyperbolic isometry with translation length at least $\inf_{j\geq i}L_j$. Since $L_i\rightarrow\infty$ and therefore $\inf_{j\geq i}L_j\rightarrow\infty$ as $i\rightarrow\infty$, non-trivial isometries of $P(t)$ cannot accumulate anywhere in $\Isom^+(\BH^2)$, which completes the proof.
\end{proof}

\begin{example}[Non-example omitting condition (2) of the \hyperlink{thm:combo}{Combination Theorem}]
\label{ex:non}
    There exists a collection $\CP$ of paths $P:[0,1]\longrightarrow\CD_2$ such that each $P\in\CP$ has an assigned non-empty closed subset $A_P\subset\BH^2$ satisfying the following.
    \begin{enumerate}[a)]
        \item There exists a $\delta>0$ such that for all $P\neq P'$, $d(A_P,A_{P'})>\delta$ and the closures of $A_P$ and $A_{P'}$ in $\overline{\BH^2}$ are disjoint.
        \item There exists some $D>0$ and $P_*\in\CP$ such that $d(A_{P_*}, A_P)<D$ for infinitely many $P\in\CP$.
        \item For all $s\in [0,1]$, $P\in\CP$, and $\psi_s\in P(s)-\{\id\}$, \[\psi_s(\BH^2-A_P)\subset A_P.\]
        \item The map $\Pi:[0,1]\longrightarrow\CD_2$ defined by \[\Pi(s):=\left<\bigcup_{P\in\CP}P(s)\right>\] is not continuous.
    \end{enumerate}
\end{example}

\begin{proof}

    Let $\alpha\subset\BH^2$ be a geodesic, and let $A$ be one of the closed hyperbolic half-spaces that it bounds. Choose $B\subset A$ to be another closed hyperbolic half-space such that $B$ has positive distance from $\alpha$. Fix a hyperbolic isometry $\gamma$ with axis $\alpha$ such that $C:=\gamma^{-1}(B)$ and $B$ also have positive distance from one another; the choice of such a $\gamma$ is allowed by the condition that $B$ has positive distance from $\alpha$. Let $\psi\in\Isom^+(\BH^2)$ be an isometry that takes the exterior of $B$ to the interior of $C$. See Figure \ref{fig:Counterexample} for a depiction of these sets.\par

    \begin{figure}[h]
        \centering
        \includegraphics[width=0.35\linewidth]{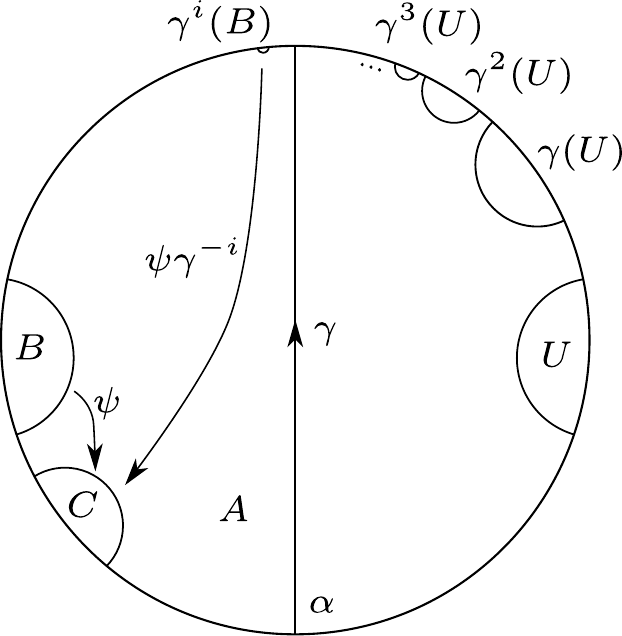}
        \caption{A depiction of the subsets and isometries appearing in Example \ref{ex:non}.}
        \label{fig:Counterexample}
    \end{figure}

    We now consider the family of isometries $\{\psi\gamma^{-i}\}_{i\geq 1}$. Note that for all $i\geq 1$, we have 
    \begin{equation}
    \label{eq:counterex ping pong}
        (\psi\gamma^{-i})(\BH^2-\gamma^i(B))=\mathrm{int}(C).
    \end{equation}
    Since $d(\gamma^i(B),C)\rightarrow\infty$ as $i\rightarrow\infty$, the isometries $\psi\gamma^{-i}$ are hyperbolic with translation length going to $\infty$, as $i\rightarrow\infty$. Thus, the construction in Example \ref{ex:trivial infinitely often} produces a path $P_*:[0,1]\longrightarrow\CD_2$ (there just called $P$) such that $P_*(1/2i)=\langle\psi\gamma^{-i}\rangle$ for all $i\geq 1$. Additionally, since $\gamma^i(B)\subset A$ for all $i$, it follows from Equation \ref{eq:counterex ping pong} and the construction of the path $P_*$ that 
    \begin{equation}
        \label{eq:PingPong for P}
        g(\BH^2-A)\subset A, \hspace{.2in} \forall\;t\in[0,1], g\in P_*(t)-\{\id\}.
    \end{equation}

    Informally, the failure of continuity as in (d) will be arranged by playing the path $P_*$ off of a set of constant paths constructed in $\BH^2-A$. More precisely, let $U\subset\BH^2-A$ be a closed hyperbolic half-space such that $U,\gamma(U),$ and $A$ all have positive distance from one another -- for example, $U$ may be the reflection of $B$ across the geodesic $\alpha=\partial A$. Let $\varphi\in\Isom^+(\BH^2)$ be an isometry such that \[\varphi^i(\BH^2-U)\subset U\] for all $i\neq 0$.\par

    For all $i\geq 0$, let $Q_i:[0,1]\longrightarrow\CD_2$ be the constant path at the subgroup $\langle \gamma^i\varphi\gamma^{-i}\rangle$. Note that for all $i\in\BZ$ and $j\neq 0$ we have
    \begin{equation}
        \label{eq:PingPong for Q}
        (\gamma^i\varphi\gamma^{-i})^j(\BH^2-\gamma^i(U))=(\gamma^i\varphi^j)(\BH^2-U)\subset\gamma^i(U).
    \end{equation}

    We now define $\CP$ to be the family of paths in $\CD_2$ consisting of $P_*$ and the paths $Q_i$ for $i\geq 0$. Defining $A_{P_*}:=A$ and $A_{Q_i}:=\gamma^i(U)$, the choice of $U$ implies that conditions (a) and (b) of the example statement are satisfied. In particular, note that \[d(A_{P_*},A_{Q_i})=d(A,U)\] for all $i$, so condition (b), the negation of condition (2) of the \hyperlink{thm:combo}{Combination Theorem}, is satisfied. Additionally, Equations \ref{eq:PingPong for P} and \ref{eq:PingPong for Q} imply that condition (c) of the example holds.\par

    It remains to check that the map $\Pi:[0,1]\longrightarrow\CD_2$ appearing in (d) is not continuous at $0$. First, note that $P_*(0)=\{\id\}$, and therefore $\Pi(0)=\langle\gamma^i\varphi\gamma^{-i}\;|\;i\geq 0\rangle$. Our goal is therefore to show that the groups $\Pi(t)$ accumulate on an isometry outside this group $\Pi(0)$. Indeed, note that for all $i\geq 1$, we have \[\psi\varphi\psi^{-1}=(\psi\gamma^{-i})(\gamma^i\varphi\gamma^{-i})(\psi\gamma^{-i})^{-1}.\] Thus, since $P_*(1/2i)=\langle\psi\gamma^{-i}\rangle$ and $\gamma^i\varphi\gamma^{-i}\in Q_i(t)$ for all $t$ (hence, in particular for $t=1/2i$), we see that $\psi\varphi\psi^{-1}$ is an element of $\Pi(1/2i)$ for all $i$, and is therefore accumulated on by the groups $\Pi(t)$ as $t\rightarrow 0$. Noting that $\psi\varphi\psi^{-1}\not\in\Pi(0)$, for example since $g(A)\cap A=\varnothing$ for all $g\in\Pi(0)-\{\id\}$, completes the proof.
\end{proof}

\section{The binary path}
\label{sec:binary path}

\subsection{Specifying homology}
\label{subsec:specifying homology}

For the construction of the Binary Path in Theorem \ref{thm:binary path}, we require a few topological lemmas that allow us to construct $3$-manifolds with specified homology torsion. We will use Dehn surgery to accomplish this.\par

Let $M$ be a compact orientable $3$-manifold containing a knot $K$ -- that is, $K$ is the image of an embedding $S^1\hookrightarrow M$ -- and let $\nu(K)\subset M$ be a tubular neighborhood of $K$. Assume that $K$ is homologically trivial in $M$: in this case, the torus $\partial\nu(K)$ has a canonical\footnote{In this setting, $\lambda$ may be called a ``surface longitude," as it is constructed by pushing the boundary of a Seifert surface into the knot exterior boundary (see e.g. \cite{Kegel}).} longitude $\lambda$ that is trivial in the first homology group $H_1(M-\mathrm{int}(\nu(K)),\BZ)$. As usual, $\partial\nu(K)$ also has a canonical meridian $\mu$ that is compressible in $\nu(K)$. Up to reversing the orientations of $\mu$ and $\lambda$, coprime integers $p,q$ determine the $3$-manifold $M^K_{p,q}$ obtained by $p/q$ \textit{Dehn surgery on $M$ along $K$}. Specifically, $M^K_{p,q}$ is constructed by gluing a solid torus to $\partial\nu(K)$ in $M-\mathrm{int}(\nu(K))$ such that the meridian of the solid torus maps to the curve $p\mu+q\lambda$ on $\partial \nu(K)$. See Rolfsen \cite[Ch. 9F]{Rolfsen} for more details.\par

A standard argument using the Mayer-Vietoris sequence gives the following lemma. See \cite[Lem. 3.4]{FPS-Surgery} for details.

\begin{lemma}
\label{lemma:prescribing homology}
    For a compact orientable $3$-manifold $M$ and a homologically trivial knot $K$, \[H_1(M^K_{p,q},\BZ)\cong H_1(M,\Z)\oplus\BZ/p\BZ\] for all coprime $p,q$.
\end{lemma}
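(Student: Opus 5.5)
Let $X := M - \mathrm{int}(\nu(K))$ denote the knot exterior and $V$ the glued-in solid torus, so that $M^K_{p,q} = X \cup_T V$ with $T = \partial\nu(K)$. The plan is to compute $H_1(X,\BZ)$ from $M$, and then compute $H_1(M^K_{p,q},\BZ)$ from $X$ by the Mayer--Vietoris sequence of this decomposition. In each case the key input is the behavior of the classes $[\mu]$ and $[\lambda]$ in $H_1(X)$.

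\emph{Step 1: the exterior.} Apply Mayer--Vietoris to $M = X \cup \nu(K)$, which are glued along $T$. This gives
\[H_1(T) \to H_1(X)\oplus H_1(\nu(K)) \to H_1(M) \to \tilde H_0(T)=0.\]
We have $H_1(T) = \BZ\mu\oplus\BZ\lambda$ and $H_1(\nu(K))\cong\BZ$, generated by the core, which is homologous to $\lambda$; the class $\mu$ dies in $\nu(K)$. Since $K$ is homologically trivial, $[\lambda]=0$ in $H_1(X)$ by the choice of surface longitude. I will also show that $[\mu]$ generates a $\BZ$-summand of $H_1(X)$. The natural tool is intersection number with the Seifert surface $S$, pushed into $X$: this defines a homomorphism $H_1(X)\to\BZ$ that sends $\mu\mapsto 1$, and so splits off $\BZ[\mu]$. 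Combining this with the exact sequence yields $H_1(X)\cong H_1(M)\oplus\BZ[\mu]$. One has to check that the map $H_1(X)\to H_1(M)$ has kernel exactly $\langle\mu\rangle$. This follows from exactness: the image of $H_1(T)$ is generated by $(\mu,0)$ and $(0,\pm[K])$, where $[\lambda]=0$ in $X$. Compactness and orientability of $M$ make the intersection pairing well defined.

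\emph{Step 2: the surgery.} Apply Mayer--Vietoris to $M^K_{p,q}=X\cup_T V$, where the meridian of $V$ is glued to $p\mu+q\lambda$. The sequence
\[H_1(T)\to H_1(X)\oplus H_1(V)\to H_1(M^K_{p,q})\to 0\]
shows that $H_1(M^K_{p,q})$ is the quotient of $H_1(X)\oplus\BZ c$, where $c$ is the core of $V$, by the images of $\mu$ and $\lambda$. Choose $r,s$ with $ps-qr=1$, so that $\{p\mu+q\lambda,\ r\mu+s\lambda\}$ is a basis of $H_1(T)$ and the second curve maps to $\pm c$ in $V$. The relations are then $p[\mu]+q[\lambda]=0$ (from the meridian of $V$) and $r[\mu]+s[\lambda]=c$. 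The second relation just eliminates $c$. Using $[\lambda]=0$ in $X$, the first relation becomes $p[\mu]=0$. Hence
\[H_1(M^K_{p,q})\cong (H_1(M)\oplus\BZ[\mu])/\langle p[\mu]\rangle\cong H_1(M)\oplus\BZ/p\BZ.\]

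The main obstacle is Step 1, specifically establishing that $[\mu]$ has infinite order and spans a direct summand in $H_1(X)$. The intersection-with-$S$ homomorphism is what I would use first. As a fallback, I would use Lefschetz duality $H_1(X)\to H^2(X,\partial X)$ together with the class dual to $S$. Everything else is routine bookkeeping of the exact sequences, including the ambiguity of orientation signs.
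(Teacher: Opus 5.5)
Your proposal is correct and is exactly the standard Mayer--Vietoris computation the paper invokes, with details deferred to \cite[Lem. 3.4]{FPS-Surgery}. You first get $H_1(X)\cong H_1(M)\oplus\BZ[\mu]$, using the Seifert surface to split off $[\mu]$; the filling then imposes only $p[\mu]=0$, because $[\lambda]=0$ in $H_1(X)$.
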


The main tool that we need from this section is the following application of the previous lemma to hyperbolic 3-manifolds.

\begin{lemma}[Specifying homology torsion]
\label{lemma:realizing prescribed homology}
    For all primes $p$ sufficiently large, there exists an infinite volume convex cocompact hyperbolic $3$-manifold $M_p$ with first homology \[H_1(M_p,\BZ)\cong\BZ^2\oplus\BZ/p\BZ.\]
\end{lemma}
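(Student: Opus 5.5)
We start from a fixed compact orientable hyperbolizable $3$-manifold $N$ with incompressible boundary and $H_1(N,\BZ)\cong\BZ^2$. The natural candidate is the exterior of a suitable link in $S^3$ (or a genus-$2$ handlebody-type manifold), chosen so that its interior admits a convex cocompact hyperbolic structure. Concretely, take $N$ to be a compact atoroidal, acylindrical (``Haken'') $3$-manifold whose boundary is a single closed surface of genus $\geq 2$; for example, the exterior of a hyperbolic genus-$2$ handlebody knot in $S^3$, whose homology is $\BZ^2$ by Alexander duality. By Thurston's hyperbolization for Haken manifolds, $\mathrm{int}(N)$ admits a geometrically finite hyperbolic structure without cusps, hence a convex cocompact one of infinite volume (the boundary is not a torus).

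Next, we choose inside $N$ a homologically trivial knot $K$ whose exterior $N-\mathrm{int}(\nu(K))$ is again hyperbolic, atoroidal and acylindrical, with boundary $\partial N\sqcup T^2$. A commutator of two generators of $\pi_1$ realized as a complicated embedded curve works, e.g.\ a knot obtained by a ``hyperbolic'' tangle replacement, so that $K$ is null-homologous. Then $\mathrm{int}(N-\nu(K))$ carries a geometrically finite structure with one rank-$2$ cusp and no other parabolics. Thurston's hyperbolic Dehn surgery theorem, in the geometrically finite version (Bonahon--Otal, or Comar's generalization), says that for all but finitely many slopes the filled manifold is hyperbolic, and its interior carries a convex cocompact structure with no cusps. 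In particular, the slopes $p\mu+\lambda$ (so $q=1$, coprime to every $p$) give $M_p:=\mathrm{int}(N^K_{p,1})$ convex cocompact for all $p$ sufficiently large. It has infinite volume since its boundary has genus $\geq2$.

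Finally, Lemma \ref{lemma:prescribing homology} gives $H_1(N^K_{p,1},\BZ)\cong H_1(N,\BZ)\oplus\BZ/p\BZ\cong\BZ^2\oplus\BZ/p\BZ$, and $M_p$ deformation retracts onto $N^K_{p,1}$. (Primality of $p$ is then only needed later, to read off the binary digits uniquely from the torsion.)

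The main obstacle is the second step: guaranteeing that a null-homologous knot $K$ exists with hyperbolic, acylindrical exterior, and that Dehn filling preserves convex cocompactness rather than merely hyperbolicity of the interior. For existence of $K$, I would first appeal to the known fact (Myers) that every compact $3$-manifold contains knots with hyperbolic exterior, and that such knots can be taken in any homology class, in particular the trivial one. For the filling, I would use that filling a cusp of a geometrically finite manifold along long slopes yields geometrically finite manifolds converging geometrically to the original. This is the Dehn filling theorem for geometrically finite manifolds, where the conformal boundary varies continuously.
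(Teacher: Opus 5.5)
Your proposal is correct and follows essentially the paper's argument: use Myers' theorem (plus hyperbolization) to get a null-homologous knot $K$ with hyperbolizable exterior, compute $H_1(N^K_{p,1},\BZ)\cong\BZ^2\oplus\BZ/p\BZ$ via Lemma \ref{lemma:prescribing homology}, and apply Comar's geometrically finite Dehn filling theorem to get a convex cocompact structure for large $p$. The only difference is the base manifold: the paper takes $N$ to be a genus-$2$ handlebody, so it needs neither incompressible $\partial N$ nor a hyperbolic structure on $N$ itself, and the latter plays no role in your argument either.
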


\begin{proof}

    Let $N$ denote an open genus-$2$ handlebody. Fix a knot $J$ in $N$ that is homologically trivial, but homotopically non-trivial. For example, $J$ could be in the homotopy class of the loop $a^{-1}b^{-1}ab\in\pi_1(N)=\langle a,b\rangle$. The main theorem in \cite{ExcellentMyers} combined with Thurston's hyperbolization theorem \cite{THUI} implies that $J$ is homotopic to a knot $K$ such that the knot complement $N-K$ is hyperbolizable. For each prime $p$, Lemma \ref{lemma:prescribing homology} implies that \[H_1(N_{p,1}^K,\BZ)\cong\BZ^2\oplus \BZ/p\BZ,\] and work of Comar \cite{ComarThesis} implies that $N_{p,1}^K$ admits an infinite volume convex cocompact hyperbolic structure for $p$ sufficiently large.
\end{proof}

\subsection{Construction of the Binary Path}

\label{subsec:binary construction}

Our first example here will be a ``building block" for the \hyperlink{thm:binary}{Binary Path}. We will describe a general method for producing isomorphism bump paths as defined in Definition \ref{def:Isom bump path}, and the \hyperlink{thm:binary}{Binary Path} will be constructed as a ``combination" of these isomorphism bump paths, using the \hyperlink{thm:combo}{Combination Theorem}.

\begin{example}[Isomorphism bump path]
\label{ex:isom bump}
    Suppose $\Gamma\leq\Isom^+(\BH^3)$ is discrete and torsion free, and that there exists a hyperbolic half-space $A\subset\BH^3$ such that \[\psi(\BH^3-A)\subset A\hspace{.4in}\text{for all }\psi\in\Gamma-\{\id\}.\] Then, for any open interval $I\subset\BR$, there exists a path $B:\BR\longrightarrow\CD_3$ such that \[B(s)\cong
    \begin{cases}
        \Gamma, &s\in I\\
        \{\id\}, &\text{else}.
    \end{cases}\]
    Furthermore, $\psi_s(\BH^3-A)\subset A$ for all $s\in \BR$ and $\psi_s\in B(s)-\{\id\}$.
\end{example}

\begin{proof}
    It suffices to assume $I=(0,1)$, as the general case then follows by a reparameterization. Working in the upper half-space model for $\BH^3$, replacing $\Gamma$ with an $\Isom^+(\BH^3)=\PSL_2\C$ conjugate allows us to assume that the hyperbolic half-space $A$ is the convex hull of the disk \[D:=\{z\in\C\;|\;|z|\leq 1\}\subset\partial\overline{\BH^3},\] where $\partial\overline{\BH^3}=\BC\cup\{\infty\}$ is the boundary of $\BH^3$ ``at infinity". For $t>0$, consider the isometry 
    \[\varphi_t:=
    \begin{bmatrix}
        t & 0\\
        0 & 1/t
    \end{bmatrix}\in\PSL_2\C,\]
    which acts as a M\"obius transformation on $\BC$ by the map $z\mapsto t^2z$. In particular, we have $\varphi_t(D)=t^2D\subset\BC$, so $\varphi_t(A)=t^2 A=:\hull(t^2D)$.\par

    We now define a map $B:\BR\longrightarrow\CD_3$ by 
    \[B(t)=
    \begin{cases}
    \{\id\}, &t=0,1\\
    \varphi_{2t}\Gamma\varphi_{2t}^{-1}, &0<t\leq 1/2,\\
    \varphi_{2-2t}\Gamma\varphi_{2-2t}^{-1}, &1/2<t<1.
    \end{cases}\] Informally, one should think of $B$ as bringing in the group $\Gamma$ from infinity at $t=0$, and sending it back out to infinity at $t=1$. \par
    
    We claim now that $B$ is a (continuous) path in $\CD_3$. Since $\varphi_t$ varies continuously for $t>0$, it suffices to show that $B(t)\rightarrow\{\id\}$ in $\CD_3$ as $t\rightarrow 0$ and as $t\rightarrow 1$. Note that for any $t>0$, \begin{equation}
    \label{eqn:small ball}
        (\varphi_t\psi\varphi_t^{-1})(\BH^3-t^{2}A)=\varphi_t\psi(\BH^3-A)\subset\varphi_t(A)=t^2A,\hspace{.2in}\forall\psi\in\Gamma-\{\id\}.
    \end{equation}
    For any compact set $K\subset\BH^3$, we have $t^2 A\cap K=\varnothing$ for $t$ sufficiently close to $0$. It follows that the non-trivial elements of $B(t)$ cannot accumulate anywhere in $\PSL_2\C$, as $t\rightarrow 0$ or $t\rightarrow 1$, which proves the claim. 
\end{proof}

Similar examples to the path $B$ were constructed by the author in Examples C.1 and C.2 in \cite{ZevenbergenPaths1}. Additionally, note that $B$ is an isomorphism bump path as defined in Definition \ref{def:Isom bump path}, as the requisite continuous map $(0,1)\longrightarrow\Hom(\Gamma,\Isom^+(\BH^3))$ can be produced by conjugating the identity homomorphism.\par

We now define a collection of hyperbolic half-spaces, which will become the collection of disjoint closed subsets used when citing the \hyperlink{thm:combo}{Combination Theorem} to justify the continuity of the \hyperlink{thm:binary}{Binary Path}. For each $n,m\in\BZ$, define \[D_{n,m}:=\{z\in\BC\;|\;|z-(n+im)|\leq 1/3\},\hspace{.2in}\text{and}\hspace{.2in}A_{n,m}:=\hull(D_{n,m})\subset\BH^3,\] so that $A_{n,m}$ is the closed hyperbolic half-space accumulating on the radius $1/3$ disk $D_{n,m}\subset\BC\subset\partial\overline{\BH^3}$ centered at $n+im$.\par

\begin{lemma}
\label{lemma:conditions 1 and 2}
    For any $D>0$, if $\Delta:=|(n,m)-(n',m')|$ is sufficiently large, then \[d(A_{n,m},A_{n',m'})>D.\] Further, there exists $\delta>0$ such that $d(A_{n,m},A_{n',m'})>\delta$ if $(n,m)\neq(n',m')$.
\end{lemma}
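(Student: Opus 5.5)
The plan is to compute $d(A_{n,m},A_{n',m'})$ exactly in the upper half-space model, using the classical formula for the distance between two hyperbolic half-spaces bounded by disjoint hemispheres. Let $c:=\Delta=|(n+im)-(n'+im')|$ denote the Euclidean distance between the centers of $D_{n,m}$ and $D_{n',m'}$. Both disks have radius $r=1/3$, and for distinct lattice points $c\geq 1>2r$, so the disks are disjoint. Hence the boundary hemispheres $\partial A_{n,m}$ and $\partial A_{n',m'}$ are disjoint totally geodesic planes. Then $d(A_{n,m},A_{n',m'})$ equals the distance between these planes, realized along their common perpendicular.

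The key step is the formula
\[\cosh d(A_{n,m},A_{n',m'})=\frac{c^2-2r^2}{2r^2}=\frac{9c^2}{2}-1,\]
which is the inversive-distance formula $\cosh d=(c^2-r_1^2-r_2^2)/(2r_1r_2)$ specialized to $r_1=r_2=r$. To justify it, I would use the translation $z\mapsto z-(n+im)$ followed by a rotation, which is an element of $\PSL_2\C$ and does not change distances. This reduces to the case where the centers are $0$ and $c>0$ on the real axis. By the reflection symmetry across the vertical plane over $\mathrm{Re}(z)=c/2$, the common perpendicular lies in the vertical half-plane over the real axis. There the computation is the standard two-dimensional one: the distance between two disjoint geodesics in $\BH^2$ whose endpoints are $\pm r$ and $c\pm r$.

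One can compute this via the cross ratio of the four endpoints. Alternatively, apply a M\"obius map sending the configuration to two concentric semicircles of radii $1$ and $R$, at distance $\log R$. The cross ratio of the endpoints $(-r,r,c-r,c+r)$ gives $R$, and then $\cosh\log R=\frac{c^2-2r^2}{2r^2}$. This is routine, and the only care needed is getting the cross-ratio normalization right. I expect this to be the main (though modest) obstacle.

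Both claims then follow immediately. Since $\cosh d\to\infty$ as $c=\Delta\to\infty$, for any $D>0$ we get $d(A_{n,m},A_{n',m'})>D$ once $\Delta>\sqrt{2(\cosh D+1)/9}$. For distinct lattice points $c\geq1$, so $\cosh d\geq 7/2$. Taking any $\delta$ with $0<\delta<\arcosh(7/2)$ gives the uniform bound $d(A_{n,m},A_{n',m'})>\delta$.

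If one wishes to avoid the exact formula, a cruder argument also works. Every point of $A_{n,m}$ has height at most $1/3$ and lies within Euclidean horizontal distance $1/3$ of $n+im$. For points $x=(z,h)$ and $y=(z',h')$ with $h,h'\leq 1/3$, the standard bound $\cosh d(x,y)=1+\frac{|z-z'|^2+(h-h')^2}{2hh'}\geq 1+\frac{(c-2/3)^2}{2/9}$ gives both the divergence and the uniform positive lower bound directly.
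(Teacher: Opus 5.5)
Your proof is correct, and it takes a more elementary and explicit route than the paper. The paper argues qualitatively. It cites Farb's analysis of geodesics in electrified hyperbolic space to force the shortest path between $A_{n,m}$ and $A_{n',m'}$ to cross the height-one horosphere at two points roughly $\Delta$ apart in that horosphere's Euclidean metric. It then turns this into a lower bound on hyperbolic distance via a standard horospherical distance estimate. Finally, it gets the uniform $\delta$ from the first statement together with the transitive action of the translations $z\mapsto z+a+bi$ on the half-spaces, leaving implicit that the finitely many remaining pairs have positive distance.

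You instead compute $\cosh d=\tfrac{9}{2}\Delta^2-1$ exactly, or bound it below by $1+\tfrac{9}{2}(\Delta-\tfrac23)^2$ directly from the upper half-space distance formula. This handles every pair at once and gives explicit constants: any $\delta<\arcosh(7/2)$, or $\delta<\arcosh(3/2)$ from the cruder bound. It needs neither electrification nor the symmetry-plus-finiteness step. The paper's approach is less computational and would adapt to configurations with no closed formula, but for round disks of fixed radius your argument is simpler and self-contained.

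One small correction to the exact computation. The reflection across the vertical plane over $\mathrm{Re}(z)=c/2$ swaps the two hemispheres, so it only shows that the common perpendicular crosses that plane orthogonally at its midpoint. To place the perpendicular in the vertical half-plane over the real axis, use the reflection $z\mapsto\bar z$ instead. It preserves each hemisphere, so by uniqueness it maps the common perpendicular to itself and fixes both feet, hence fixes the segment pointwise. Your cruder bound avoids this issue entirely and is already a complete proof on its own.
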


\begin{proof}
    It follows, for example from Farb's analysis of geodesics in ``electrified" hyperbolic spaces \cite{FarbRelativelyHyperbolic}, that if $\Delta$ is sufficiently large, the shortest path $\gamma$ between $A_{n,m}$ and $A_{n',m'}$ must intersect the height $1$ horosphere $T$ centered at infinity in a pair of points $x,y$. Further, there is a universal constant $\varepsilon$ such that the intersection points $x$ and $y$ must have distance at least $\Delta-\varepsilon$ from one another in the intrinsic Euclidean metric on $T$. We then have \[d(A_{n,m}, A_{n',m'})\geq d(x,y)\geq 2\tanh^{-1}\left(\frac{(\Delta-\varepsilon)^2}{(\Delta-\varepsilon)^2+4}\right)^{1/2},\] by \cite[Cor. A.5.8]{BEN}. The value on the right-hand side of the inequality goes to $\infty$ as $\Delta\rightarrow\infty$. This establishes the first statement of the lemma.\par

    It follows from the first statement that there exists a $\delta>0$ such that $d(A_{0,0}, A_{n,m})>\delta$ for all $(n,m)\neq (0,0)$. The disjoint union $\bigcup_{n,m\in\BZ}A_{n,m}$ is invariant under a group of isometries that acts transitively on the components of this union, allowing us to conclude the second statement of the lemma.
\end{proof}

This section closes with the construction of the Binary Path.

\vspace{.1in}

\noindent\textbf{{Theorem~\ref*{thm:binary path}}} (Binary Path)
\textit{There exists a path $\Phi:[0,1]\longrightarrow\CD_3$ such that $\Phi(t)$ is isomorphic to $\Phi(s)$ if and only if $t=s$.}

\begin{proof}
By Lemma \ref{lemma:realizing prescribed homology}, we can let $\{p_n\}_{n\geq 0}$ be a set of distinct primes for which there exists an infinite volume convex cocompact hyperbolic $3$-manifold $M_{p_n}=\Gamma_{p_n}\backslash\BH^3$ with 
\begin{equation}
\label{eqn: homology}
    H_1(M_{p_n},\BZ)\cong\BZ^2\oplus\BZ/p_n\BZ.    
\end{equation} Since each $\Gamma_{p_n}$ has non-empty domain of discontinuity, for any $m\in\BZ$ there is a $\PSL_2\C$ conjugate $\Gamma_{p_n}^{n,m}$ of $\Gamma_{p_n}$ such that \[\psi(\BH^3-A_{n,m})\subset A_{n,m},\hspace{.2in} \forall\psi\in\Gamma_{p_n}^{n,m}-\{\id\}.\]\par

For $n\geq 0$ and $-(2^n-1)\leq m\leq 2^n-1$, Example \ref{ex:isom bump} therefore allows us to define paths $B_{n,m}:\BR\longrightarrow\CD_3$ so that 

\begin{equation}
\label{eqn: B isom}
    B_{n,m}(t)\cong
    \begin{cases}
        \Gamma_{p_n}, &0\leq m\;\text{ and }\;{-1}/2< 2^{n+1}t-2m<1\\
        \Gamma_{p_n}, &m<0\;\text{ and }\;{-1}/2< 2^{n+1}t+2m<0\\
        \{\id\}, &\text{else}
    \end{cases}
\end{equation}

and for all $s\in\BR$ and $n,m$, we have
\begin{equation}
    \label{eqn:B ping pong}
    \psi_s(\BH^3-A_{n,m})\subset A_{n,m},\hspace{.2in}\forall\psi\in B_{n,m}(s)-\{\id\}.
\end{equation}\par

We now claim that the collection $\CP$ consisting of the paths $B_{n,m}$ for $n\geq 0$ and $-2^n+1\leq m\leq 2^n-1$ satisfy the hypotheses of the \hyperlink{thm:combo}{Combination Theorem}. Here, we associate to each path $B_{n,m}$ the closed subset $A_{n,m}\subset\BH^3$. It follows from Lemma \ref{lemma:conditions 1 and 2} that this collection $\CP$ satisfies conditions (1) and (2) of the \hyperlink{thm:combo}{Combination Theorem}. That condition (3) is satisfied follows immediately from Equation \ref{eqn:B ping pong}. Thus, it follows from the \hyperlink{thm:combo}{Combination Theorem} that the map $\Phi':\BR\rightarrow\CD_3$ defined by \[\Phi'(t):=\left\langle\bigcup_{\substack{n\geq 0\\-2^n+1\leq m\leq 2^n-1}}B_{n,m}(t)\right\rangle\] is a path. We define $\Phi:[0,1]\longrightarrow\CD_3$ to be the restriction of $\Phi'$ to the domain $[0,1]$. \par

It remains to check that if $\Phi(t)\cong\Phi(s)$, then $t=s$. For a Kleinian group $\Gamma$, let  $\abtor(\Gamma)$ denote the subgroup of finite order elements of the abelianization of $\Gamma$. What we will show is that if $\abtor(\Phi(t))\cong\abtor(\Phi(s))$, then $t=s$. \par

Note that since each $\Phi(t)$ splits as a free product of the groups $B_{n,m}(t)$, we have \[\abtor(\Phi(t))\cong \prod_{\substack{n\geq 0\\-2^n+1\leq m\leq 2^n-1}}\abtor(B_{n,m}(t))\] for all $t$. We see from Equations \ref{eqn: homology} and \ref{eqn: B isom} that for any $n,m$
\[\abtor(B_{n,m}(t))\cong
\begin{cases}
    \BZ/p_n\BZ, &m\geq 0\;\text{ and }\;{-1}/2<2^{n+1}t-2m<1\\
    \BZ/p_n\BZ, &m< 0\;\text{ and }\;{-1}/2<2^{n+1}t+2m<0\\
    \{\id\}, & \text{else}.
\end{cases}\]
Thus, $\abtor(\Phi(t))$ has \textit{exactly one} subgroup of order $p_n$ if and only if $0\leq 2^{n+1}t-2m<1$ for some $0\leq m\leq 2^n-1$. Indeed, $\abtor(\Phi(t))$ has \textit{at least one} subgroup of order $p_n$ if and only if ${-1}/2< 2^{n+1}t-2m<1$ for some $0\leq m\leq 2^n-1$, but if ${-1}/2< 2^{n+1}t-2m<0$, then $\abtor(\Phi(t))$ has a $(\BZ/p_n\BZ)^2$ subgroup.\par

For $t\in[0,1)$ and $n\geq 0$, set $a_n^t=0$ if $\abtor(\Phi(t))$ has exactly one subgroup of order $p_n$, and otherwise set $a_n^t=1$. We therefore recover a ``binary representation" for $t$, as \[t=\sum_{n\geq 0}\frac{a_n^t}{2^{n+1}}.\] This confirms that $t$ is uniquely determined by $\abtor(\Phi(t))$, since we also have $\abtor(\Phi(t))\cong\{\id\}$ if and only if $t=1$.
\end{proof}

\section{Decompositions of paths of convex cocompact groups}

\label{sec:decomp}

\subsection{Chabauty topology path machinery}
\label{subsec:path machinery}

We now summarize and expand upon the machinery developed by the author in \cite{ZevenbergenPaths1} to analyze paths in $\CD_n$, beginning with the following lemma.

\begin{lemma}[{\cite[Lem. 4.1]{ZevenbergenPaths1}}]
    \label{Unbhd}
    For a fixed $\Gamma\in\CD_n$ and $\psi\in\Gamma$, there exists a neighborhood $U_\psi\subset\Isom^+(\BH^n)$ such that if  $\Gamma_i\rightarrow\Gamma$ is a convergent sequence in $\CD_n$, $\Gamma_i\cap U_\psi$ contains exactly one element for $i$ sufficiently large.
\end{lemma}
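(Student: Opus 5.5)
Fix $\Gamma\in\CD_n$ and $\psi\in\Gamma$. The plan is to take $U_\psi:=\psi V$, where $V$ is a small symmetric neighborhood of $\id$ in $\Isom^+(\BH^n)$ with compact closure, and to use two inputs. Condition 2 of Definition \ref{def:Chabauty} will give existence of an element of $\Gamma_i$ in $U_\psi$. A Zassenhaus--Margulis type argument, combined with Condition 1 and torsion-freeness, will give uniqueness.

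\emph{Existence.} By Condition 2, there are $\psi_i\in\Gamma_i$ with $\psi_i\to\psi$. Hence $\psi_i\in U_\psi$ for $i$ large, since $U_\psi$ is an open neighborhood of $\psi$.

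\emph{Uniqueness.} Suppose that, along a subsequence, $\Gamma_i\cap U_\psi$ contains two distinct elements $\alpha_i\neq\beta_i$. Then $g_i:=\alpha_i^{-1}\beta_i\in\Gamma_i-\{\id\}$ lies in $V^{-1}\psi^{-1}\psi V=V^2$. We now choose $V$ small enough that $\overline{V^2}$ is contained in a Zassenhaus neighborhood $W$ of $\id$. This neighborhood has the property that for any discrete group $\Delta$, the elements of $\Delta\cap W$ generate a nilpotent group. In addition, we shrink $V$ so that $\overline{V^2}\cap\Gamma=\{\id\}$, which is possible since $\Gamma$ is discrete.

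The key step is to show that $g_i$ cannot accumulate at $\id$ while staying nontrivial. Since $\overline{V^2}$ is compact, a subsequence converges, $g_i\to g\in\overline{V^2}$. By Condition 1, $g\in\Gamma$, so $g=\id$. Thus we have nontrivial elements $g_i\in\Gamma_i$ converging to $\id$, and we must derive a contradiction from $\Gamma_i\to\Gamma$.

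I will handle this using the torsion-free hypothesis, which is essential here (for finite cyclic groups of growing order converging to a circle group, the statement would fail even with a discrete limit). The elements $g_i$ lie in a neighborhood where they generate nilpotent groups $N_i\le\Gamma_i$. Since $\Gamma_i$ is torsion-free and discrete, $N_i$ consists of parabolic and/or hyperbolic elements with a common fixed point or axis structure. If $g_i$ is hyperbolic with translation length tending to $0$, then powers $g_i^{k_i}$ can be chosen with $k_i\to\infty$ so that $g_i^{k_i}$ converges to a nontrivial element $h$ of a one-parameter family. The same works if $g_i$ is parabolic with translation at a basepoint tending to $0$: we choose $k_i$ so that $d(x,g_i^{k_i}x)\approx c$ for a small fixed $c>0$. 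By Condition 1, $h\in\Gamma$, with $h\neq\id$ and $h\in\overline{V^2}$ once $c$ is small relative to $V$. This contradicts $\overline{V^2}\cap\Gamma=\{\id\}$.

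The main obstacle is this last step: making the choice of powers uniform so that $g_i^{k_i}$ stays inside the prescribed small compact set while being bounded away from $\id$. I would control this by a displacement function at a basepoint $x$. For hyperbolic elements with small rotational part, and for parabolic elements, the map $k\mapsto d(x,g_i^kx)$ grows roughly monotonically in steps of size tending to $0$. So there is an intermediate power at displacement about $c$, whose rotational part lies in a compact set, and a limit then exists. Elliptic-type rotational components near $\id$ need separate care; I would handle them by passing to a further power, using compactness of $SO(n-1)$ to return the rotational part near the identity.
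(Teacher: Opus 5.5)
Your reduction is correct. Existence comes from Condition 2, and the compactness argument (using $\overline{V^2}\cap\Gamma=\{\id\}$ and Condition 1) correctly reduces uniqueness to ruling out nontrivial $g_i\in\Gamma_i$ with $g_i\to\id$. The paper quotes this lemma from the author's earlier work without proof, so your argument has to stand on its own.

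\textbf{The gap is in the final contradiction.} You assert that the limit $h$ of $g_i^{k_i}$ lies in $\overline{V^2}$ once $c$ is small. But a bound on the displacement of the single point $x$ does not bound the distance from $h$ to $\id$ in $\Isom^+(\BH^n)$. For example, an elliptic involution whose fixed set lies at distance $c/2$ from $x$ moves $x$ by exactly $c$, yet involutions stay outside a fixed neighborhood of $\id$. So knowing $h\in\Gamma$ and $d(x,hx)=c$ does not contradict $\overline{V^2}\cap\Gamma=\{\id\}$.

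Your proposed repair (pass to a further power to make the rotational part small, using compactness of $SO(n-1)$) is only gestured at. To make it work you would need two things: torsion-freeness of $\Gamma$ itself, to keep $h^m\neq\id$; and a uniform estimate that small displacement at $x$ plus nearly trivial rotational part forces $h^m\in\overline{V^2}$. The Zassenhaus/nilpotency input contributes nothing here. All you actually use is that each $g_i$ is non-elliptic, which is immediate from $\Gamma_i$ being discrete and torsion free.

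\textbf{The step closes easily once you drop the requirement $h\in\overline{V^2}$.} Since $\Gamma$ acts properly discontinuously, only finitely many $\gamma\in\Gamma$ satisfy $d(x,\gamma x)\leq 1$. Hence $\sigma:=\min\left(\{1\}\cup\{d(x,\gamma x)\;|\;\gamma\in\Gamma,\ 0<d(x,\gamma x)\leq 1\}\right)>0$. Take $c<\sigma$.
\begin{enumerate}
\item Because $\langle g_i\rangle$ is infinite and discrete, $k\mapsto d(x,g_i^kx)$ is unbounded.
\item By the triangle inequality, consecutive values differ by at most $d(x,g_ix)\to0$; no monotonicity is needed.
\item So the first $k_i$ with $d(x,g_i^{k_i}x)\geq c$ has $d(x,g_i^{k_i}x)\leq c+d(x,g_ix)$.
\item These elements lie in a compact subset of $\Isom^+(\BH^n)$, so a subsequence converges to some $h$ with $d(x,hx)=c$.
\item Condition 1 gives $h\in\Gamma$, contradicting $0<c<\sigma$.
\end{enumerate}

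Alternatively, the standard Lie-theoretic argument needs neither geometry nor torsion-freeness. Write $g_i=\exp X_i$ with $0\neq X_i\to 0$ in the Lie algebra, and pass to a subsequence with $X_i/\|X_i\|\to X$. Then $g_i^{\lfloor t/\|X_i\|\rfloor}\to\exp(tX)$ for every $t\in\BR$, so Condition 1 places a nontrivial one-parameter subgroup inside the discrete group $\Gamma$.

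In particular, your parenthetical claim that torsion-freeness is essential is mistaken. A circle group is not discrete, and discreteness of the limit is all the lemma requires.
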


For the remainder of the section, fix a path $P:I\longrightarrow\CD_n$, where $I\subset\BR$ is an interval, and let $\Gamma_t$ denote $P(t)$. Our next goal will be to describe a way to ``track" an element $\psi\in\Gamma_t$ through the path of subgroups. As we track elements, some may ``diverge to infinity." To make this precise, we let 
\[\overline{\Isom^+(\BH^n)}=\Isom^+(\BH^n)\cup\{\infty\}\] denote the one-point compactification of $\Isom^+(\BH^n)$. We say that any map $J:X\longrightarrow\overline{\Isom^+(\BH^n)}$ is \textit{finite valued} if $J(X)\subset\Isom^+(\BH^n)$.\par

The following proposition summarizes the results of Section 4.2 of \cite{ZevenbergenPaths1}, by the author.

\begin{prop}[{\cite[Sec. 4.2]{ZevenbergenPaths1}}]
\label{prop:J Facts}
    For all $s\in I$, there exists a unique family of maps \[J_{s,t}:\Gamma_s\longrightarrow\overline{\Isom^+(\BH^n)}\] such that for all  $\psi\in\Gamma_s$,
    \begin{enumerate}
        \item $J_{s,t}(\psi)\in\Gamma_t\cup\{\infty\}$ for all $t\in I$,
        \item $J_{s,s}(\psi)=\psi$,
        \item the map $t\mapsto J_{s,t}(\psi)\in\overline{\Isom^+(\BH^n)}$ is a continuous function of $t\in I$, 
        \item there exists an interval $I_\psi^s$ open in $I$ such that $J_{s,t}(\psi)\neq\infty$ if and only if $t\in I_\psi^s$, and
        \item if $t,t'\in I_\psi^s$, then $t'\in I_{J_{s,t}(\psi)}^t$ and $(J_{t,t'}\circ J_{s,t})(\psi)=J_{s,t'}(\psi)$.
    \end{enumerate}
\end{prop}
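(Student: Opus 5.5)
We would build the maps $J_{s,t}$ directly from Lemma \ref{Unbhd}, by continuation along the interval. Fix $s\in I$ and $\psi\in\Gamma_s$. Lemma \ref{Unbhd} gives, for each $t\in I$ and each $\varphi\in\Gamma_t$, a neighborhood $U_\varphi$ and a neighborhood $V_t$ of $t$ in $I$ such that for all $t'\in V_t$ the set $\Gamma_{t'}\cap U_\varphi$ is a single element. This uses continuity of $P$: if no such $V_t$ existed, some sequence $t_i\to t$ would violate the lemma. We may shrink $U_\varphi$ to be relatively compact with closure meeting $\Gamma_t$ only in $\varphi$.

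First we define a local tracking map $\tau_{t,t'}(\varphi)$ as the unique element of $\Gamma_{t'}\cap U_\varphi$. Next we show $t'\mapsto\tau_{t,t'}(\varphi)$ is continuous at every $t'\in V_t$, not only at $t$. Given $t''\to t'$, the elements $\tau_{t,t''}(\varphi)$ lie in the compact $\overline{U_\varphi}$. By Chabauty Condition 1, every subsequential limit lies in $\Gamma_{t'}\cap\overline{U_\varphi}$. To see that this limit equals $\tau_{t,t'}(\varphi)$, we apply Lemma \ref{Unbhd} at $t'$ to the element $\tau_{t,t'}(\varphi)$ and use uniqueness.

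We then set $I^s_\psi$ to be the set of $t$ reachable from $s$ by a continuous path $c:[s,t]\to\Isom^+(\BH^n)$ with $c(u)\in\Gamma_u$ and $c(s)=\psi$, and put $J_{s,t}(\psi)=c(t)$ there and $\infty$ elsewhere. The local uniqueness above shows such continuous selections are unique: by discreteness, two selections that agree at one time agree nearby, so by connectedness they agree everywhere. This gives well-definedness together with properties (1), (2), (5) and openness of $I^s_\psi$. Property (5) follows because concatenation and restriction of selections are again selections.

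The main obstacle is property (3) at an endpoint $t_0=\sup I^s_\psi$ lying in $I$: we must show $J_{s,t}(\psi)\to\infty$ as $t\to t_0^-$. We would argue by contradiction. Suppose a sequence $t_i\to t_0$ has $J_{s,t_i}(\psi)$ staying in a compact set. Passing to a subsequence, these elements converge to some $\varphi$, and $\varphi\in\Gamma_{t_0}$ by Condition 1. Then for large $i$ the elements $J_{s,t_i}(\psi)$ lie in $U_\varphi$, so they equal $\tau_{t_0,t_i}(\varphi)$. The selection therefore extends continuously through $t_0$ via $\tau_{t_0,\cdot}(\varphi)$, contradicting maximality.

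A subtlety is that we need this for all $t\to t_0$, not just along one sequence. Here it suffices to show that once $J$ enters $U_\varphi$ for some $t$ near $t_0$, it stays on the branch $\tau_{t_0,\cdot}(\varphi)$. This follows from uniqueness of continuous selections on $V_{t_0}$. Uniqueness of the whole family follows from the same rigidity: any family satisfying (1) through (4) is a continuous selection on an interval with value $\psi$ at $s$, so it coincides with ours.
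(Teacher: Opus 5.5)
Your proposal is correct and follows essentially the construction the paper imports from \cite[Sec.~4.2]{ZevenbergenPaths1}. Near $s$, $J_{s,t}(\psi)$ is taken to be the unique element of $U_\psi\cap\Gamma_t$ from Lemma \ref{Unbhd} and then extended by maximal continuation, and uniqueness of continuous selections (the ``strong uniqueness'' of \cite[Prop.~4.3]{ZevenbergenPaths1} invoked in Lemma \ref{lem:bump vs J}) gives well-definedness, property (5), and uniqueness of the family. Two small tightenings: your ``subtlety'' paragraph is unnecessary, since the contradiction argument already applies to an arbitrary sequence $t_i\to t_0$; and in the final uniqueness claim you should add that a competing family cannot have a finiteness interval strictly smaller than $I^s_\psi$, because by (3) it would then diverge at a point where $J_{s,\cdot}(\psi)$ has a finite limit.
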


Before continuing, we will make a few remarks about this proposition. A point that will be important to emphasize for the present discussion is the continuity of the path $t\mapsto J_{s,t}(\psi)$ given by Proposition \ref{prop:J Facts}(3). Informally, this tells us that we can use the path $J_{s,t}(\psi)$ to track an element $\psi\in\Gamma_s$ through the groups $\Gamma_t$ for $t$ close to $s$; indeed, for $t$ close to $s$, $J_{s,t}(\psi)$ is constructed to be the unique element of $U_\psi\cap\Gamma_t$, where $U_\psi\subset\Isom^+(\BH^n)$ is the subset given by Lemma \ref{Unbhd}. Additionally, this continuity implies that the only way that we can ``lose track" of an element of $\Gamma_s$ (i.e., fail to match it to an element of $\Gamma_t$) is if the path $t\mapsto J_{s,t}(\psi)$ diverges to $\infty$ in $\overline{\Isom^+(\BH^n)}$ as $t$ approaches the frontier of the interval $I_\psi^s$. \par

We also record the following lemma.

\begin{lemma}[{\cite[Prop. 4.6, Cor. 4.7]{ZevenbergenPaths1}}]
\label{lem:injhomo}
    For any subgroup $H\leq\Gamma_s$ and $s,t\in I$, $J_{s,t}|_H$ is an injective homomorphism into $\Gamma_t$ if and only if $J_{s,t}|_H$ is finite valued. If $H$ is finitely generated, then $J_{s,t}$ is finite valued for $t$ sufficiently close to $s$.
\end{lemma}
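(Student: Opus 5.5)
The first claim splits into two directions. The forward direction is trivial: a homomorphism into $\Gamma_t$ takes values in $\Gamma_t\subset\Isom^+(\BH^n)$, so it is finite valued. For the converse, assume $J_{s,t}|_H$ is finite valued, and fix $\psi,\varphi\in H$. Since $\psi,\varphi,\psi\varphi\in H$, Proposition \ref{prop:J Facts}(4) shows that $s$ and $t$ lie in the intervals $I_\psi^s$, $I_\varphi^s$ and $I_{\psi\varphi}^s$. Hence the whole segment $[s,t]$ (or $[t,s]$) lies in all three intervals. On this segment we compare two continuous finite-valued paths,
\[t'\mapsto J_{s,t'}(\psi\varphi)\qquad\text{and}\qquad t'\mapsto J_{s,t'}(\psi)\,J_{s,t'}(\varphi),\]
and let $S$ be the set of $t'$ where they agree. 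By Proposition \ref{prop:J Facts}(2), $S$ contains $s$. By continuity from Proposition \ref{prop:J Facts}(3), $S$ is closed.

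\emph{Openness of $S$.} Suppose $t'\in S$, and set $g=J_{s,t'}(\psi)$ and $h=J_{s,t'}(\varphi)$, so $gh=J_{s,t'}(\psi\varphi)$. Proposition \ref{prop:J Facts}(5) gives $J_{s,t''}(\cdot)=J_{t',t''}(J_{s,t'}(\cdot))$ for $t''$ in the segment. For $t''$ near $t'$, the element $J_{t',t''}(x)$ is the unique element of $\Gamma_{t''}\cap U_x$, where $U_x$ is the neighbourhood from Lemma \ref{Unbhd} applied to $\Gamma_{t'}$. The product $J_{t',t''}(g)J_{t',t''}(h)$ lies in $\Gamma_{t''}$ and converges to $gh$ as $t''\to t'$. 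So for $t''$ close to $t'$ it lies in $U_{gh}$, and uniqueness forces it to equal $J_{t',t''}(gh)=J_{s,t''}(\psi\varphi)$. Thus $S$ is open, and by connectedness $S$ is the whole segment. In particular $J_{s,t}|_H$ is a homomorphism.

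\emph{Injectivity.} First, $J_{t,s}(\id)=\id$, because the constant path at $\id$ satisfies properties (1)–(5) of Proposition \ref{prop:J Facts}, so uniqueness applies. Now suppose $J_{s,t}(\psi)=\id$. Since $t\in I_\psi^s$, Proposition \ref{prop:J Facts}(5) gives
\[\psi=J_{s,s}(\psi)=J_{t,s}\bigl(J_{s,t}(\psi)\bigr)=J_{t,s}(\id)=\id .\]

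\emph{Finitely generated case.} Let $H=\langle h_1,\dots,h_r\rangle$. The intersection $I'=\bigcap_k\bigl(I^s_{h_k}\cap I^s_{h_k^{-1}}\bigr)$ is an interval, open in $I$, containing $s$. I will show by induction on word length that every $w\in H$ has $J_{s,t'}(w)$ finite for all $t'\in I'$. Write $w=w'x$ with $x$ a generator or its inverse and $w'$ shorter. Let $S$ be the set of $t'\in I'$ at which $J_{s,t'}(w)$ is finite and equals $p(t'):=J_{s,t'}(w')J_{s,t'}(x)$; here $p$ is a continuous finite path with $p(t')\in\Gamma_{t'}$ and $p(s)=w$. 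The set $S$ contains $s$. It is closed in $I'$: if $t'_n\to t'$ with $t'_n\in S$, then $J_{s,t'_n}(w)=p(t'_n)\to p(t')$, so by continuity in $\overline{\Isom^+(\BH^n)}$ the value $J_{s,t'}(w)$ is finite and equals $p(t')$. Openness is the same Lemma \ref{Unbhd} uniqueness argument as above. Hence $S=I'$, and $J_{s,t'}|_H$ is finite valued on all of $I'$.

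The main obstacle is the openness step. It relies on the fact that $J$ is locally the ``unique nearby element'' map of Lemma \ref{Unbhd}, combined with the cocycle property in Proposition \ref{prop:J Facts}(5). Everything else is a connectedness argument.
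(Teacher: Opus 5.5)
The paper gives no proof of this lemma; it quotes it from \cite[Prop.~4.6, Cor.~4.7]{ZevenbergenPaths1}. So the comparison is with the tools the paper makes available, and your argument is a correct, self-contained derivation from Proposition \ref{prop:J Facts} and Lemma \ref{Unbhd}. Each step is the same connectedness argument. A continuous path $t'\mapsto p(t')\in\Gamma_{t'}$ with $p(s)=\psi$ must agree with $t'\mapsto J_{s,t'}(\psi)$: the agreement set is closed by continuity in $\overline{\Isom^+(\BH^n)}$, and open by Lemma \ref{Unbhd}. In substance this is the strong uniqueness statement \cite[Prop.~4.3]{ZevenbergenPaths1} that the paper invokes in the proof of Lemma \ref{lem:bump vs J}.

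Your finitely generated argument actually gives more than the statement. It shows that the locus where $J_{s,t'}|_H$ is finite valued is exactly the interval $I'=\bigcap_k(I^s_{h_k}\cap I^s_{h_k^{-1}})$, which is open in $I$. This immediately yields the fact used in the inductive step of Lemma \ref{lem: J is CC} and in the Chromatography proof: the set of $t$ where $J_{0,t}|_H$ is finite valued is an interval open in $[0,1]$ containing $0$. You also read the second sentence of the statement correctly, as a claim about $J_{s,t}|_H$.

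One justification needs repair. In the injectivity step you assert $J_{t,s}(\id)=\id$ because the constant path ``satisfies (1)--(5), so uniqueness applies.'' The uniqueness in Proposition \ref{prop:J Facts} is uniqueness of the whole family, and property (5) couples different paths. If you replace $t'\mapsto J_{t,t'}(\id)$ by the constant path, the new family satisfies (5) only if no $\psi\neq\id$ has $J_{s,t}(\psi)=\id$. That is exactly the injectivity you are proving, so the step is circular as written.

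The step is in fact unnecessary. Property (5) with $t'=s$ gives $J_{t,s}(J_{s,t}(\psi))=J_{s,s}(\psi)=\psi$ whenever $t\in I^s_\psi$. So $J_{s,t}$ has a left inverse on $H$ and is injective as a map of sets. Alternatively, run your own closed/open argument with the constant candidate path $p\equiv\id$, which is admissible since $\id\in\Gamma_{t'}$ for every $t'$. This gives $J_{s,t'}(\id)=\id$ for all $s,t'$. It also supplies the base case your word-length induction silently uses when $w'$ is the empty word.
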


Observe that the condition that $J_{s,t}$ is finite valued is also equivalent to having $t\in I_\psi^s$ for all $\psi\in\Gamma_s$, where the intervals $I_\psi^s$ are those appearing in Proposition \ref{prop:J Facts}. Additionally, if $s\in I$ and $I'\subset I$ is an interval containing $s$ such that $J_{s,t}$ is finite valued for all $t\in I'$, then it follows from Lemma \ref{lem:injhomo} that we can define a map $I'\longrightarrow\Hom(\Gamma_s,\Isom^+(\BH^n))$ by $t\mapsto J_{s,t}$. Furthermore, it follows from the continuity in Proposition \ref{prop:J Facts}(3) that this map is continuous, where $\Hom(\Gamma_s,\Isom^+(\BH^n))$ is equipped with the compact-open topology.\par

Though we will in general lose the continuity given by Proposition \ref{prop:J Facts}(3), it will be useful to define a variant of each map $J_{s,t}$ whose image is contained in $\Isom^+(\BH^n)$. Accordingly, for each $s,t\in I$, we define the map $J_{s,t}^*:\Gamma_s\longrightarrow\Isom^+(\BH^n)$ by \[J_{s,t}^*(\psi)=
\begin{cases}
    J_{s,t}(\psi), &t\in I_\psi^s\\
    \id,& \text{else}.
\end{cases}\]
In particular, we see that $J_{s,t}^*(\Gamma_s)=J_{s,t}(\Gamma_s)\cap\Isom^+(\BH^n)$.

\begin{lemma}
\label{Lemma-SubgroupConvg}
    For any $s,t\in I$ and subgroup $H\leq\Gamma_s$, $J_{s,t}^*(H)$ is a subgroup of $\Gamma_t$. Further, the map $P':I\longrightarrow\CD_n$ defined by $P'(t)=J_{s,t}^*(H)$ is continuous.
\end{lemma}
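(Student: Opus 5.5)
The plan is to reduce both statements to one fact about the intervals $I_\psi^s$ of Proposition \ref{prop:J Facts}: \emph{for fixed $s,t$, the set $H_t:=\{\psi\in H\;|\;t\in I_\psi^s\}$ is a subgroup of $H$, and $J_{s,t}|_{H_t}$ is a homomorphism.} Given this, Lemma \ref{lem:injhomo} makes $J_{s,t}|_{H_t}$ an injective homomorphism into $\Gamma_t$, since it is finite valued by construction. Hence $J_{s,t}^*(H)=J_{s,t}(H_t)\cup\{\id\}=J_{s,t}(H_t)$ is a subgroup of $\Gamma_t$.

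To prove the fact, fix $\psi,\varphi\in H_t$ and set $K:=\langle\psi,\varphi\rangle$. Let $L:=I_\psi^s\cap I_\varphi^s$, an interval containing $s$ and $t$. Let $S\subset L$ be the set of $u$ for which $J_{s,u}|_K$ is finite valued. Then $s\in S$ by Proposition \ref{prop:J Facts}(2), and I will show $S$ is open and closed in $L$, so $S=L\ni t$.

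\emph{Closedness.} Suppose $u_n\in S$ and $u_n\to u\in L$. For any word $w$ in $\psi^{\pm1},\varphi^{\pm1}$, Lemma \ref{lem:injhomo} lets us write $J_{s,u_n}(w)$ as the corresponding word in $J_{s,u_n}(\psi)^{\pm1},J_{s,u_n}(\varphi)^{\pm1}$. This converges to a finite element, because $u\in L$. By continuity (Proposition \ref{prop:J Facts}(3)), $J_{s,u}(w)$ is therefore finite.

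\emph{Openness.} Take $u\in S$. The group $J_{s,u}(K)$ is finitely generated, so by Lemma \ref{lem:injhomo} the map $J_{u,u'}$ is finite valued on it for $u'$ near $u$. For each $w\in K$, both $u'\mapsto J_{s,u'}(w)$ and $u'\mapsto J_{u,u'}(J_{s,u}(w))$ are continuous near $u$, take values in $\Gamma_{u'}$, and agree at $u'=u$. By the uniqueness in Lemma \ref{Unbhd} applied to the neighborhood $U_{J_{s,u}(w)}$, they coincide for $u'$ near $u$. It suffices to do this for the two generators $\psi,\varphi$: then $J_{s,u'}$ agrees with the homomorphism $J_{u,u'}\circ J_{s,u}$ on generators, and closedness-type reasoning extends finiteness to all of $K$.

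This clopen argument also yields the cocycle property with domains. Namely, for $t\in I^s_\psi$ we get $I^t_{J_{s,t}(\psi)}=I^s_\psi$ and $J_{t,t'}\circ J_{s,t}=J_{s,t'}$ on all of $I^s_\psi$, which strengthens Proposition \ref{prop:J Facts}(5). I will use this below.

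For continuity of $P'(t)=J^*_{s,t}(H)$ at $t_0$, I check the two conditions of Definition \ref{def:Chabauty}.

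\emph{Condition 2.} Every element of $P'(t_0)$ other than $\id$ has the form $J_{s,t_0}(\psi)$ with $t_0\in I_\psi^s$. Since $I_\psi^s$ is open, $J_{s,t}(\psi)\in P'(t)$ for $t$ near $t_0$, and these elements converge to $J_{s,t_0}(\psi)$ by Proposition \ref{prop:J Facts}(3).

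\emph{Condition 1.} This is the main obstacle. Suppose $\psi_i\in H_{t_i}$, $t_i\to t_0$, and $J_{s,t_i}(\psi_i)\to\phi$; the case where infinitely many terms equal $\id$ is trivial. By continuity of $P$, $\phi\in\Gamma_{t_0}$. By Lemma \ref{Unbhd} and the construction of $J$, for large $i$ the element $J_{s,t_i}(\psi_i)$ is the unique element of $\Gamma_{t_i}\cap U_\phi$, so it equals $J_{t_0,t_i}(\phi)$. Applying the strengthened cocycle property twice, we get $t_0\in I^{t_i}_{J_{t_0,t_i}(\phi)}=I^{t_i}_{J_{s,t_i}(\psi_i)}=I^s_{\psi_i}$. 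It then follows that
\[\phi=J_{t_i,t_0}(J_{t_0,t_i}(\phi))=J_{t_i,t_0}(J_{s,t_i}(\psi_i))=J_{s,t_0}(\psi_i)\in P'(t_0).\]
The delicate point is this domain bookkeeping, which is exactly why the strengthened form of Proposition \ref{prop:J Facts}(5) has to be established first.
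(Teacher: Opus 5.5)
Your structure is the paper's. You use the same subgroup $H_t$ (the paper's $H'$) with Lemma \ref{lem:injhomo} for the first claim. Condition 2 comes from openness of $I^s_\psi$ and Proposition \ref{prop:J Facts}(3). Condition 1 comes from the identification $J_{s,t_i}(\psi_i)=J_{t_0,t_i}(\phi)$ via Lemma \ref{Unbhd}, followed by domain bookkeeping with Proposition \ref{prop:J Facts}(5\&2).

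The paper simply asserts $I^s_\psi=I^s_{\psi^{-1}}$ and $I^s_\psi\cap I^s_\varphi\subset I^s_{\psi\varphi}$. Your clopen argument justifies this, and its closedness half is fine, but the openness half does not go through as written:
\begin{itemize}
\item Finiteness of $J_{s,u'}$ on the generators $\psi,\varphi$ is automatic on $L=I^s_\psi\cap I^s_\varphi$, so agreement on generators gives nothing new.
\item ``Closedness-type reasoning'' cannot push finiteness to all words. Closedness uses that $J_{s,u_n}|_K$ is already a homomorphism at nearby points, which is exactly what openness is supposed to supply.
\item The neighborhoods produced by Lemma \ref{Unbhd} depend on $w$, so they are not uniform over the infinitely many $w\in K$.
\end{itemize}

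The repair uses the fact you call a strengthening of (5). It is not a strengthening; it is a formal consequence of (5) and (2). For $t\in I^s_\psi$, (5) gives $I^s_\psi\subseteq I^t_{J_{s,t}(\psi)}$ and $J_{t,s}(J_{s,t}(\psi))=\psi$. Applying (5) at base point $t$ to $J_{s,t}(\psi)$, using $s\in I^t_{J_{s,t}(\psi)}$, gives the reverse inclusion $I^t_{J_{s,t}(\psi)}\subseteq I^s_{J_{t,s}(J_{s,t}(\psi))}=I^s_\psi$.

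Prove this equality first, rather than as a by-product of the clopen argument; otherwise the argument is circular. Openness is then immediate. Given $u\in S$, Lemma \ref{lem:injhomo} provides a neighborhood $V$ of $u$ on which $J_{u,u'}$ is finite valued on the finitely generated group $J_{s,u}(K)$. Then $V\subseteq I^u_{J_{s,u}(w)}=I^s_w$ for every $w\in K$, so $V\subseteq S$. Alternatively, run a per-element clopen argument for the agreement set of $u'\mapsto J_{s,u'}(w)$ and $u'\mapsto J_{u,u'}(J_{s,u}(w))$ on this fixed $V$.

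With this fix your proof is complete and more self-contained than the paper's. The same equality is what the paper's ``several more applications of Proposition \ref{prop:J Facts}(5\&2)'' in Condition 1 amounts to.
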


\begin{proof}

    Fix $s,t\in I$. Define \[H'=\{\psi\in H\;|\;t\in I_\psi^s\},\] the set of elements $\psi\in H$ for which $J_{s,t}(\psi)\neq\infty$.  For all $\psi,\varphi\in H$, we have $I_\psi^s=I_{\psi^{-1}}^s$ and $I_\psi^s\cap I_\varphi^s\subset I_{\psi\varphi}^s$, so $H'$ is a subgroup of $\Gamma_s$. Observe that $J_{s,t}^*(H)=J_{s,t}(H')$ and Lemma \ref{lem:injhomo} implies that $J_{s,t}|_{H'}$ is an injective homomorphism. The image of $J_{s,t}|_{H'}$ lies in $\Gamma_t$, so we confirm that $J_{s,t}^*(H)\leq\Gamma_t$.\par

    To address continuity in the Chabauty topology, we fix $s,t\in I$ and will prove that \[J_{s,t'}^*(H)\rightarrow J_{s,t}^*(H)\] as $t'\rightarrow t$. To check the first condition for convergence in the Chabauty topology as stated in Definition \ref{def:Chabauty}, suppose there exist sequences $\{t_k\}_{k\geq 1}\subset I$ and $\{\psi_k\}_{k\geq 1}\subset H$ such that $t_k\rightarrow t$ and $J_{s,t_k}^*(\psi_k)\rightarrow \varphi\in\Isom^+(\BH^n)$ as $k\rightarrow\infty$. If $\varphi=\id$, then indeed $\varphi\in J_{s,t}^*(H)$. Suppose, then, that $\varphi\neq\id$, in which case \[J_{s,t_k}^*(\psi_k)=J_{s,t_k}(\psi_k)\neq\id\] for $k$ sufficiently large. Since $\Gamma_{t_k}\rightarrow \Gamma_t$ as $k\rightarrow\infty$, we must have $\varphi\in\Gamma_t$. Letting $U_\varphi\subset\Isom^+(\BH^n)$ be the neighborhood of $\varphi$ given by Lemma \ref{Unbhd} with respect to $\Gamma_t$, we must have $J_{s,t_k}(\psi_k)\in U_\varphi$ for $k$ sufficiently large. Additionally, for large $k$, $J_{t,t_k}(\varphi)$ is constructed to be the unique element of $\Gamma_{t_k}$ contained in $U_\varphi$. Thus, after potentially removing finitely many terms from the sequences and reindexing, we must have \[J_{s,t_k}(\psi_k)=J_{t,t_k}(\varphi)\] for all $k$. Note that Proposition \ref{prop:J Facts}(5\&2) says that $(J_{t_k,t}\circ J_{t,t_k})(\varphi)=\varphi$. Thus, several more applications of Proposition \ref{prop:J Facts}(5\&2) imply that $t\in I_{\psi_k}^s$ and \[J_{s,t}(\psi_k)=(J_{t_k,t}\circ J_{s,t_k})(\psi_k)=J_{t_k,t}(J_{t,t_k}(\varphi))=\varphi\] for all $k$. This confirms that $\varphi\in J_{s,t}^*(H)$, as was to be shown. For clarity, we note that it now follows from Lemma \ref{lem:injhomo} that $\psi_k=\psi_{k'}$ for all $k,k'$, though we do not need this fact.\par

    To check the second condition for Chabauty topology convergence, fix an arbitrary element $J_{s,t}^*(\psi)$, where $\psi\in H$. Since $\id\in J_{s,t'}^*(H)$ for every $t'$, we may assume that $J_{s,t}^*(\psi)\neq\id$, in which case $t\in I_{\psi}^s$. The proof is then finished by noting that $I_{\psi}^s$ is open by Proposition \ref{prop:J Facts}(4), and applying the continuity given by Proposition \ref{prop:J Facts}(3). 
\end{proof}

The next lemma roughly says that these maps $J_{s,t}^*$ fully describe an isomorphism bump path.

\begin{lemma}
    \label{lem:bump vs J}
    If $B:I\longrightarrow\CD_n$ is an isomorphism bump path and $s\in I$ is such that $B(s)\neq\{\id\}$, then \[B(t)=J_{s,t}^*(B(s))\] for all $t\in I$.
\end{lemma}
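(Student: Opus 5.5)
The plan is to write $B(t)=R(t)(\Gamma)$ for $t\in J$ and $B(t)=\{\id\}$ for $t\in I-J$, as in Definition \ref{def:Isom bump path}. Since $B(s)\neq\{\id\}$, we have $s\in J$. Every element of $B(s)$ is then $\psi=R(s)(g)$ for a unique $g\in\Gamma$, because $R(s)$ is injective. The strategy is to identify the tracking path $t\mapsto J_{s,t}(\psi)$ of Proposition \ref{prop:J Facts} with the explicit path $t\mapsto R(t)(g)$ on $J$, and with $\infty$ off $J$ (when $\psi\neq\id$). Both identifications will be proved by open--closed arguments on intervals.

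\textbf{Step 1: on $J$.} Fix $\psi=R(s)(g)$ and let
\[S=\{t\in J : J_{s,t}(\psi)=R(t)(g)\}.\]
Then $s\in S$ by Proposition \ref{prop:J Facts}(2). To see that $S$ is open in $J$, take $t_0\in S$ and let $\varphi=R(t_0)(g)\in\Gamma_{t_0}$. Let $U_\varphi$ be the neighborhood from Lemma \ref{Unbhd} for $\Gamma_{t_0}$. By continuity of $R$, and by Proposition \ref{prop:J Facts}(3) together with the fact that $J_{s,t_0}(\psi)$ is finite, both $R(t)(g)$ and $J_{s,t}(\psi)$ are elements of $\Gamma_t$ lying in $U_\varphi$ for $t$ near $t_0$. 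Uniqueness in Lemma \ref{Unbhd} then forces them to agree. (The lemma is stated for sequences; applying it along every sequence $t_i\to t_0$ gives the neighborhood statement.)

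To see that $S$ is closed in $J$, take $t_k\in S$ with $t_k\to t_0\in J$. Then $J_{s,t_k}(\psi)=R(t_k)(g)\to R(t_0)(g)$, which is finite. By continuity into $\overline{\Isom^+(\BH^n)}$, we get $J_{s,t_0}(\psi)=R(t_0)(g)$. Since $J$ is connected, $S=J$. Hence $J_{s,t}$ is finite valued on $B(s)$ and $J_{s,t}(B(s))=R(t)(\Gamma)=B(t)$ for all $t\in J$, which gives $J_{s,t}^*(B(s))=B(t)$ there.

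\textbf{Step 2: off $J$.} For $t\in I-J$ we must show $J_{s,t}^*(B(s))=\{\id\}$. This amounts to showing $I_\psi^s\subset J$ for every $\psi\in B(s)-\{\id\}$. Fix such a $\psi$ and let
\[T=\{t\in I_\psi^s : J_{s,t}(\psi)=\id\}.\]
$T$ is closed in $I_\psi^s$ by continuity. For openness, take $t_1\in T$. Proposition \ref{prop:J Facts}(5) gives $J_{s,t}(\psi)=J_{t_1,t}(\id)$ for $t\in I_\psi^s$. Lemma \ref{lem:injhomo}, applied to the trivial subgroup (which is finitely generated), shows that $J_{t_1,t}(\id)=\id$ for all $t$, since a homomorphism sends $\id$ to $\id$. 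So in fact $T=I_\psi^s$ or $T=\varnothing$ directly.

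Because $J_{s,s}(\psi)=\psi\neq\id$, we get $T=\varnothing$. On the other hand, any $t\in I_\psi^s-J$ has $J_{s,t}(\psi)\in\Gamma_t=\{\id\}$, so $t\in T$. Therefore $I_\psi^s\subset J$, and $J^*_{s,t}(\psi)=\id$ for every $t\notin J$.

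\textbf{Main obstacle.} The delicate point is Step 1's openness argument. It requires the ``unique element near $\varphi$'' property to hold uniformly for $t$ near $t_0$, not merely along a single sequence. I would derive this by contradiction from Lemma \ref{Unbhd}: a bad sequence $t_i\to t_0$ would contradict the lemma, since $\Gamma_{t_i}\to\Gamma_{t_0}$.
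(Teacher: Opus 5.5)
Your proof is correct. Its overall shape matches the paper's: identify the tracked elements with $R(t)(g)$ on $J$, then treat $I-J$ separately. The difference lies in how each step is justified.

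On $J$, the paper simply invokes Proposition 4.3 of \cite{ZevenbergenPaths1}. That is a strong uniqueness statement: any continuous path $t\mapsto\gamma_t\in\Gamma_t$ with $\gamma_s=\psi$ must coincide with $J_{s,t}(\psi)$. Your open--closed argument on $J$ uses only Lemma \ref{Unbhd} (upgraded from sequences to a neighborhood of $t_0$ by contradiction) and Proposition \ref{prop:J Facts}(2),(3). It is in effect a self-contained proof of that uniqueness in the present setting. So your version avoids the external reference, at the cost of a few lines.

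Off $J$, the paper's argument is one line: $J_{s,t}^*(B(s))\leq B(t)=\{\id\}$ by Lemma \ref{Lemma-SubgroupConvg}. Even more directly, by definition $J^*_{s,t}(\psi)$ always lies in $\Gamma_t$. Your Step 2 proves the stronger statement $I^s_\psi\subset J$ for $\psi\neq\id$, which the lemma does not need. Combined with Step 1, though, it gives a nice extra fact: $I^s_\psi=J$ for every nontrivial $\psi\in B(s)$. In other words, nontrivial elements are lost exactly by diverging to $\infty$ at the ends of $J$, which also forces $J$ to be open in $I$.
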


\begin{proof}
    Since $B(s)\neq\{\id\}$, Definition \ref{def:Isom bump path} of an isomorphism bump path says that there exists a subinterval $I'\subset I$ and a continuous map $R:I'\longrightarrow\Hom(B(s),\Isom^+(\BH^n))$ such that each $R(t)$ is an injective homomorphism and 
    \[B(t):=
    \begin{cases}
        (R(t))(B(s)), &t\in I' \\
        \{\id\}, &t\in I-I',
    \end{cases}\]
    with $R(s)$ the identity homomorphism. For each $\psi\in B(s)$, the map $t\mapsto (R(t))(\psi)$ is a continuous path in $\Isom^+(\BH^n)$. Since $(R(s))(\psi)=\psi$ and $(R(t))(\psi)\in B(t)$ for all $t\in I'$, Proposition 4.3 in \cite{ZevenbergenPaths1}, a strong version of the uniqueness in Proposition \ref{prop:J Facts}, implies $J_{s,t}^*(\psi)=(R(t))(\psi)$ for all such $t$. This confirms that $J_{s,t}^*(B(s))=B(t)$ for all $t\in I'$.\par

    For $t\not\in I'$, we have \[J_{s,t}^*(B(s))\leq B(t)=\{\id\}\] by Lemma \ref{Lemma-SubgroupConvg}, and hence again $J_{s,t}^*(B(s))=\{\id\}$.
\end{proof}

\subsection{Proof of decomposition theorem}
\label{subsec:decomp thm}

We start by establishing some facts about loxodromic isometries of $\BH^3$. For a loxodromic $\psi\in\Isom^+(\BH^3)$, let $A_\psi$ denote its unique invariant axis on which $\psi$ translates some length $\ell(\psi)$. Let \[T_\eta(\psi):=\{x\in\mathbb{H}^3\;|\; d(x,\psi\cdot x)\leq \eta\}\] be the set of points moved by $\psi$ a distance at most $\eta>0$. For $M$ a hyperbolic $3$-manifold, a subset $S\subset M$, and $D>0$, define \[\mathcal{N}_D(S):=\{x\in M:d(x,S)\leq D\},\] the closed radius $D$ neighborhood of $S$.

\begin{lemma}
    \label{lemma-LoxodromicFacts}
    Let $\psi\in\Isom^+(\BH^3)$ be a loxodromic isometry. 
    \begin{enumerate}
        \item There exists $R=R(\ell(\psi))$ such that $T_1(\psi)\subset\mathcal{N}_R(A_\psi)$, and $R(\ell(\psi))$ may be taken to be a decreasing function of $\ell(\psi)$.
        \item For all $\eta>0$, there exists $D=D(\eta)$ such that $T_\eta(\psi)\subset\CN_D(T_1(\psi)\cup A_\psi)$.
    \end{enumerate}
\end{lemma}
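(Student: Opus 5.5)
Both parts follow from the explicit displacement formula for a loxodromic isometry, together with some monotonicity. Let $\ell=\ell(\psi)$ and let $\theta$ be the rotation angle of $\psi$ about $A_\psi$. For a point $x$ at distance $r$ from $A_\psi$, the standard formula is
\[\cosh d(x,\psi x)=\cosh^2 r\,\cosh\ell-\sinh^2 r\,\cos\theta.\]
This can be checked in the upper half-space model with $\psi(z)=e^{\ell+i\theta}z$, or taken from \cite{BEN}. The displacement therefore depends only on $r$, and since $\cosh\ell-\cos\theta\ge\cosh\ell-1>0$ it is a strictly increasing function of $r$. It follows that each $T_\eta(\psi)$ is a closed tube $\CN_{r_\eta}(A_\psi)$ about the axis, where $r_\eta$ solves the displayed equation with the left-hand side equal to $\cosh\eta$. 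We interpret the tube as empty if $\eta<\ell$ and as the axis itself if $\eta=\ell$.

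For part (1), rewrite the displacement as
\[\cosh d(x,\psi x)=\cosh\ell+\sinh^2 r\,(\cosh\ell-\cos\theta)\ \ge\ \cosh\ell+\sinh^2 r\,(\cosh\ell-1).\]
If $x\in T_1(\psi)$, this gives $\sinh^2 r\le(\cosh 1-\cosh\ell)/(\cosh\ell-1)$. Take $R(\ell)$ to be the arcsinh of the square root of the right-hand side, and $R=0$ when $\ell\ge1$. This bound is independent of $\theta$ and visibly decreasing in $\ell$. It blows up as $\ell\to0$, which is why $R$ must be allowed to depend on $\ell$.

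For part (2), both $T_\eta(\psi)$ and $T_1(\psi)\cup A_\psi$ are tubes, of radii $r_\eta$ and $\max(r_1,0)$ respectively, where $r_1=0$ if $T_1(\psi)$ is empty. It therefore suffices to bound $r_\eta-r_1$ uniformly in $\ell$ and $\theta$. We may assume $\eta>1$, since otherwise $T_\eta(\psi)\subset T_1(\psi)$ and $D=0$ works. Set $c=\cosh\ell-\cos\theta>0$. Then $\sinh^2 r_\eta=(\cosh\eta-\cosh\ell)/c$. When $\ell<1$ we also have $\sinh^2 r_1=(\cosh1-\cosh\ell)/c$, and the ratio of these two quantities is bounded by a constant depending only on $\eta$ (at most $(\cosh\eta-1)/(\cosh1-\cosh\ell)$, which is not yet uniform as $\ell\to1$). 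This is the one delicate step, and I would handle it by splitting into cases.

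\begin{itemize}
\item If $\ell\le 1/2$, then $\cosh 1-\cosh\ell$ is bounded below, so the ratio $\sinh^2 r_\eta/\sinh^2 r_1$ is at most some $C(\eta)$.
\item If $1/2<\ell\le\eta$, then $c\ge\cosh(1/2)-1$, so $r_\eta$ itself is bounded by a constant depending only on $\eta$. Since $A_\psi\subset T_1(\psi)\cup A_\psi$, this gives the result directly.
\item If $\ell>\eta$, then $T_\eta(\psi)$ is empty and there is nothing to prove.
\end{itemize}

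In the first case, write $f(r)=\log\sinh r$. For large radii $f$ is approximately $r$, so $\sinh r_\eta\le\sqrt{C}\sinh r_1$ gives $r_\eta\le r_1+\tfrac12\log C+O(1)$. For small radii both tubes have bounded radius. Either way $r_\eta-r_1$ is bounded by a constant depending only on $\eta$, which proves part (2).
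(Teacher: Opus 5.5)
Your proof is correct. It rests on the same computation as the paper, namely an explicit formula for the radius of the tube $T_\eta(\psi)$ about $A_\psi$, but you organize it differently in two places.

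\emph{Handling the rotation.} The paper first compares $\psi$ with the rotation-free loxodromic $\varphi$ that has the same axis and translation length. It asserts, and leaves to the reader, that $T_1(\psi)\subset T_1(\varphi)$ and $D_\psi^\eta\le D_\varphi^\eta$, and then uses the Saccheri-quadrilateral formula $\cosh R_\varphi^\eta=\sinh(\eta/2)/\sinh(\ell/2)$. You keep the rotation angle $\theta$ throughout, so these comparisons come for free.
\begin{itemize}
\item For part (1), the bound $c\ge\cosh\ell-1$ suffices. Your $R(\ell)$ is in fact exactly the paper's $\arcosh(\sinh(1/2)/\sinh(\ell/2))$.
\item For part (2), the ratio $\sinh^2r_\eta/\sinh^2r_1=(\cosh\eta-\cosh\ell)/(\cosh1-\cosh\ell)$ is independent of $\theta$. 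This replaces the paper's unproved inequality $D_\psi^\eta\le D_\varphi^\eta$.
\end{itemize}

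\emph{Uniformity in $\ell$.} The paper gets uniformity from continuity of $D_\varphi^\eta$ in $\ell$, together with the existence of the limit $\ln(\sinh(\eta/2)/\sinh(1/2))$ as $\ell\to0$. You use a case split on $\ell$ instead. Your version is more self-contained; the paper's identifies the sharp asymptotic constant.

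\emph{One loose spot.} Your final ``$O(1)$'' step can be made exact. For $k\ge1$ and $s\ge0$,
\[
\operatorname{arcsinh}(ks)-\operatorname{arcsinh}(s)\le\log k,
\]
since the left side is increasing in $s$ with limit $\log k$. So in your first case $r_\eta-r_1\le\tfrac12\log C(\eta)$ outright, with no need to distinguish small and large radii.
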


We emphasize that the constant $D$ does not depend on $\ell(\psi)$, only on $\eta$. Additionally, in Lemma \ref{lemma-LoxodromicFacts}(2) we consider the distance from $x$ to the set $T_1(\psi)\cup A_\psi$, since $T_1(\psi)=\varnothing$ if $\ell(\psi)>1$. Using $T_1(\psi)$ in the lemma statement rather than some other $T_\varepsilon(\psi)$ is not significant, and replacing $1$ with a different constant just leads to different values for $R$ and $D$. 

\begin{proof}[Proof of Lemma \ref{lemma-LoxodromicFacts}:]

    Recall that for any $\eta>0$ and loxodromic isometry $\varphi$ of $\BH^3$ with $\ell(\varphi)\leq\eta$, $T_\eta(\varphi)$ is a metric neighborhood of $A_\varphi$. We can then define $R_\varphi^\eta\geq 0$ to be the value such that \[T_\eta(\varphi)=\CN_{R_\varphi^\eta}(A_\varphi).\] If $\varphi$ has no rotation part (that is, $\varphi$ is conjugate into $\Isom^+(\BH^2)\leq\Isom^+(\BH^3)$), then a standard computation with a Saccheri quadrilateral gives that
    \begin{equation}
    \label{eqn-RVarphi}
        R_{\varphi}^\eta=\arcosh\left(\frac{\sinh(\eta/2)}{\sinh(\ell(\varphi)/2)}\right).
    \end{equation}

    (1): Let $\psi$ be the given loxodromic isometry. If $\ell(\psi)>1$, we may take $R=0$, so it suffices to assume that $\ell(\psi)\leq 1$. Fix a loxodromic $\varphi\in\Isom^+(\BH^3)$ so that $A_\varphi=A_\psi$, $\ell(\varphi)=\ell(\psi)$, and $\varphi$ has no rotation part. One can check that $T_1(\psi)\subset T_1(\varphi)$, so Equation \ref{eqn-RVarphi} implies that it suffices to define \[R=R(\ell(\psi)):=\arcosh\left(\frac{\sinh(1/2)}{\sinh(\ell(\psi)/2)}\right).\]
    
    (2): For an arbitrary loxodromic $\varphi\in\Isom^+(\BH^3)$ such that $\ell(\varphi)\leq\eta$, we define $D_\varphi^\eta$ to be the infimal value $D_\varphi^\eta$ such that $T_\eta(\varphi)\subset\CN_{D_\varphi^\eta}(T_1(\varphi)\cup A_\varphi)$. If, additionally, $1\leq\eta$, then $D_\varphi^\eta$ is the distance between $T_1(\varphi)\cup A_\varphi$ and $\partial T_\eta(\varphi)$. \par
    
    Let $\psi$ be the given loxodromic isometry and fix $\eta>0$. Similar to above, it suffices to assume $\eta\geq \max\{1,\ell(\psi)\}$, else we can set $D=0$. As in the proof of the first statement, choose a loxodromic $\varphi\in\Isom^+(\BH^3)$ with $A_\varphi=A_\psi$ and $\ell(\varphi)=\ell(\psi)$ so that $\varphi$ has no rotation part. One can confirm that $D_{\psi}^\eta\leq D_\varphi^\eta$, so our goal will be to find an upper bound for $D_\varphi^\eta$ that depends only on $\eta$.\par

    Since $D_{\varphi}^\eta$ is a continuous function of $\ell(\varphi)$, it suffices to show that $\lim_{\ell(\varphi)\rightarrow 0}D_\varphi^\eta$ exists. Note that for $0<\ell(\varphi)\leq 1$, we have \[D_\varphi^\eta=R_\varphi^\eta-R_\varphi^1=\arcosh\left(\frac{\sinh(\eta/2)}{\sinh(\ell(\varphi)/2)}\right)-\arcosh\left(\frac{\sinh(1/2)}{\sinh(\ell(\varphi)/2)}\right).\] For any functions $f,g$ such that $\lim_{x\rightarrow 0}f(x)=\lim_{x\rightarrow 0}g(x)=\infty$ and $\lim_{x\rightarrow 0}(\ln f(x)-\ln g(x))$ exists, one has \[\lim_{x\rightarrow 0}(\arcosh f(x)-\arcosh g(x))=\lim_{x\rightarrow 0}(\ln f(x)-\ln g(x)).\] Thus, since \[\lim_{\ell(\varphi)\rightarrow 0}\ln\left(\frac{\sinh(\eta/2)}{\sinh{(\ell(\varphi)/2)}}\right)-\ln\left(\frac{\sinh(1/2)}{\sinh(\ell(\varphi)/2)}\right) = \ln\left(\frac{\sinh(\eta/2)}{\sinh(1/2)}\right)\] we confirm that $D_\varphi^\eta$ is uniformly bounded in terms of $\eta$, and the claim follows.
\end{proof}

For a Kleinian group $\Gamma$ and $\eta>0$, we define \[\BH^3_{\leq\eta}(\Gamma):=\{x\in\mathbb{H}^3\;|\;\exists \psi\in\Gamma-\{\id\}\text{ such that } d(x,\psi\cdot x)\leq\eta \}=\bigcup_{\psi\in \Gamma-\{\id\}}T_\eta(\psi).\] For the hyperbolic $3$-manifold $M=\Gamma\backslash\BH^3$, we define \[M_{\leq\eta}:=\Gamma\backslash(\BH^3_{\leq\eta}(\Gamma))\hspace{.2in}\text{ and }\hspace{.2in}M_{>\eta}:=M-M_{\leq\eta}.\] Equivalently, $M_{>\eta}$ is the set of points $p\in M$ such that the open radius $\eta/2$-ball $B_M(p,\eta/2)\subset M$ is isometric to such a ball in $\BH^3$.\par

In what follows, $\Lambda(\Gamma)$ will denote the \textit{limit set} of the Kleinian group $\Gamma$ (see \cite{MAT}). For a subset $X\subset \overline{\BH^3}$, $\hull(X)$ will denote the \textit{convex hull} of $X$ in $\BH^3$: that is, $\hull(X)$ is the smallest convex subset of $\BH^3$ such that $X\subset\overline{\hull(X)}$. To simplify notation, we denote \[\hull(\Gamma):=\hull(\Lambda(\Gamma)).\]

\begin{defn}[$\TCH$ and $\TCC$]
    For a Kleinian group $\Gamma$, the \textbf{thick convex hull} of $\Gamma$ is the set \[\TCH(\Gamma)=\mathrm{CH}(\Gamma)\cup \BH^3_{\leq1}(\Gamma).\] The \textbf{thick convex core} of $M=\Gamma\backslash\mathbb{H}^3$, denoted $\TCC(M)$, is the projection of $\TCH(\Gamma)$ to $M$.
\end{defn}

This terminology is used in \cite{IANRANK} to refer to what would be called $\CN_1(\mathrm{CH}(\TCH(\Gamma)))$ here. Note that $\TCC(M)=\CC(M)\cup M_{\leq1}$. The next lemma gives the key property that we will need of the thickened convex core. Roughly, since we have included all sufficiently thin parts in $\TCC(M)$, all points in $M$ of bounded injectivity radius are within a bounded distance of $\TCC(M)$.

\begin{lemma}
\label{Lemma-TCCNeighborhood}
    For all $\eta>0$, there exists $D=D(\eta)>0$ such that if $M$ is a hyperbolic $3$-manifold, $M_{\leq\eta}\subset\mathcal{N}_D(\TCC(M))$. 
\end{lemma}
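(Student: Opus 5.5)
The plan is to lift to the universal cover and reduce to the two types of non-trivial elements of a torsion free Kleinian group. If $\eta\leq 1$ we may take any $D>0$, since $M_{\leq\eta}\subset M_{\leq 1}\subset\TCC(M)$; so assume $\eta>1$. Write $M=\Gamma\backslash\BH^3$ and let $x\in\BH^3_{\leq\eta}(\Gamma)$. Then there is $\psi\in\Gamma-\{\id\}$ with $d(x,\psi x)\leq\eta$, and since $\Gamma$ is torsion free, $\psi$ is either loxodromic or parabolic. It suffices to find $D=D(\eta)$, independent of $\Gamma$ and $\psi$, with $d(x,\TCH(\Gamma))\leq D$. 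The covering projection $\BH^3\to M$ is $1$-Lipschitz and sends $\TCH(\Gamma)$ onto $\TCC(M)$, so this bound descends to $M$.

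\emph{Loxodromic case.} By Lemma \ref{lemma-LoxodromicFacts}(2), $x\in T_\eta(\psi)\subset\CN_{D_1}(T_1(\psi)\cup A_\psi)$, where $D_1=D(\eta)$ depends only on $\eta$. We then check that both pieces lie in $\TCH(\Gamma)$:
\begin{itemize}
\item $T_1(\psi)\subset\BH^3_{\leq1}(\Gamma)$ by definition;
\item the endpoints of $A_\psi$ are the fixed points of $\psi$, which lie in $\Lambda(\Gamma)$, so $A_\psi\subset\hull(\Gamma)$.
\end{itemize}
Hence $d(x,\TCH(\Gamma))\leq D_1$.

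\emph{Parabolic case.} Conjugate so that $\psi$ fixes $\infty$ in the upper half-space model. Then $\psi$ acts on each horosphere $\{h=c\}$ as a Euclidean isometry, and the minimal displacement at height $h$ is $2\,\mathrm{arcsinh}(\tau/(2h))$, where $\tau$ is the Euclidean translation length. Points of the horosphere realizing this minimum have displacement exactly that value, and points at the same height off the minimal set have larger displacement. Let $x=(z,h)$. The vertical geodesic through $x$ moves $x$ upward, and at height $h'=h\sinh(\eta/2)/\sinh(1/2)$ the point $(z,h')$ has displacement at most $1$. This holds because displacement of a point at fixed Euclidean horizontal offset scales down as the height grows: at height $h'$ the displacement is bounded by $2\,\mathrm{arcsinh}$ of the rescaled Euclidean displacement, which we compute from $\cosh d=1+|z-\psi z|^2/(2h^2)$. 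So $(z,h')\in T_1(\psi)\subset\TCH(\Gamma)$, and $d(x,(z,h'))=\ln\bigl(\sinh(\eta/2)/\sinh(1/2)\bigr)$. This depends only on $\eta$, paralleling the computation at the end of the proof of Lemma \ref{lemma-LoxodromicFacts}.

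Taking $D$ to be the maximum of the two bounds finishes the proof. The only delicate point is the parabolic case. Lemma \ref{lemma-LoxodromicFacts} does not cover parabolics, so we must use the exact formula $\cosh d(x,\psi x)=1+|z-\psi(z)|^2/(2h^2)$ to make sure that pushing vertically reduces displacement by a controlled amount even when $\psi$ has a rotational (screw) part. The formula shows that the displacement depends only on the ratio of the Euclidean displacement $|z-\psi(z)|$ to $h$. Since $\psi$ fixes $\infty$, it acts on the vertical line through $x$ by sending $(z,t)$ to $(\psi(z),t)$, so $|z-\psi(z)|$ is unchanged along that line. The bound therefore follows from the monotonicity of $\cosh$.
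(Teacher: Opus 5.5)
Your proof is correct, and in the loxodromic case it is exactly the paper's argument: take $D$ from Lemma \ref{lemma-LoxodromicFacts}(2) and use $T_1(\psi)\cup A_\psi\subset\TCH(\Gamma)$. The paper's one-line proof says nothing about parabolic elements, so your horoball computation fills a small omission, which is harmless there because the lemma is only applied to covers of convex cocompact manifolds. As a minor simplification, orientation-preserving parabolics of $\BH^3$ are conjugate to $z\mapsto z+1$, so there is no screw part and the displacement is constant on horospheres about the fixed point; your constant $\ln\bigl(\sinh(\eta/2)/\sinh(1/2)\bigr)$ is then exactly the distance between the horoballs $T_\eta(\psi)$ and $T_1(\psi)$.
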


\begin{proof}
    Letting $D=D(\eta)$ be the constant given by Lemma \ref{lemma-LoxodromicFacts}(2), it follows at once that  \[\BH^3_{\leq\eta}(\Gamma)\subset\CN_D(\TCH(\Gamma)),\] since $T_1(\psi)\cup A_\psi\subset \TCH(\Gamma)$.
\end{proof}

Fix $M=\Gamma\backslash\BH^3$ a convex cocompact hyperbolic $3$-manifold such that $\Gamma$ is not cyclic. Additionally, assume that $\Gamma$ is not conjugate to a subgroup of $\Isom^+(\BH^2)\leq\Isom^+(\BH^3)$, so that $\CC(M)$ is a $3$-manifold with boundary. Here, $\CC(M)$ is the \textit{convex core} of $M$, which is the image in $M$ of the smallest non-empty convex $\Gamma$-invariant subset of $\BH^3$.\par

Fix $\xi>0$ less than the $2$ and $3$-dimensional Margulis constants, and assume that $A$ is a component of $(\partial \CC(M))_{\leq\xi}$, the $\xi$-thin part of the surface $\partial \CC(M)$. $A$ is an annulus with boundary curves $\alpha_1$ and $\alpha_2$ of length between $\xi$ and $2\xi$. Suppose that $\alpha_1$ (and hence $\alpha_2$) is compressible in $\CC(M)$. Then, each $\alpha_i$ bounds a disk $D_i\subset \CC(M)$ of diameter at most $\xi$ with $D_1\cap D_2=\varnothing$. The $2$-sphere $D_1\cup A\cup D_2$ in $M$ must bound a ball $h\subset \CC(M)$. After Bowditch in \cite{BOW}, we will call $h$ a \textit{$\xi$-handle}. Note that we can assume that every point in $h$ is contained in a compressing disk $(D,\partial D)\subset (h, A)$ with diameter at most $\xi$. By the choice of $\xi$, all of the $\xi$-handles constructed as above can be made disjoint. We let $W=W(\xi)\subset \CC(M)$ denote the union of all $\xi$-handles, and define $C=C(\xi)\subset \partial W$ by \[C(\xi):=\bigcup_{h\text{ a }\xi\text{-handle}} D_1^h\cup D_2^h,\] where $D_1^h,D_2^h$ are the bounded diameter compressing disks appearing in the construction of $h$.\par

The proof of the main result\footnote{In \cite[Thm. 1.1]{BOW}, Bowditch claims that in fact $\CC(M)\subset M_{\leq\eta}\cup W$. For any $\eta>0$, there are, however, hyperbolic genus-$2$ handlebodies $M$ for which $M_{\leq\eta}=\varnothing$, yet $\CC(M)$ cannot be a disjoint union of $1$-handles. The modified conclusion (which suffices to prove the main result of \cite{BOW}) follows from filling $2$-spheres such as those in $\CC(M)- W$ with bounded diameter $3$-balls, as Bowditch does via \cite[Lem. 4.1]{BOW}.} of Bowditch in \cite{BOW} implies that there exists $\eta>0$ depending only on $\xi$ and the topology of $M$ such that \[\CC(M)\subset M_{\leq\eta}\cup W\cup\CN_\eta(C).\] In this statement, one should think of $\eta$ as being quite large. We also note that the number of $\xi$-handles that $M$ contains is at most $\rank(M)$, the size of a minimal generating set for $\pi_1(M)$. It follows that $C$ contains at most $2\rank(M)$ compressing disks. \par

The following proposition provides a way of recognizing a free factor of the fundamental group of a hyperbolic $3$-manifold.

\begin{prop}
\label{prop-CCEmbedFactor}
    For any convex cocompact hyperbolic $3$-manifold $M=\Gamma\backslash \BH^3$, there exists $L>0$ depending only on the topology of $M$ as follows. Fix a subgroup $H\leq\Gamma$, set $M_H=H\backslash\mathbb{H}^3$, and let $\pi:M_H\rightarrow M$ denote the natural covering map. If $\mathcal{N}_L(\TCC(M_H))$ isometrically embeds into $M$ under $\pi$, then $H$ is a free factor of $\Gamma$.
\end{prop}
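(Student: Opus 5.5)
The plan is to turn the embedding hypothesis into a splitting of $M$ along compressing disks. We will show that $\pi_1$ of an embedded compact submanifold $Y\subset M$ equals $H$, and that $Y$ is attached to the rest of $M$ only along disks. Since $M$ is convex cocompact, we may replace $M$ by a compact core, namely a compact $3$-manifold with boundary onto which $M$ deformation retracts. So $\pi_1$ computations reduce to handlebody-type or compact-core topology.

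\textbf{Step 1: a core for $M_H$.} Since $\TCC(M_H)=\CC(M_H)\cup(M_H)_{\leq1}$ and $M_H$ deformation retracts onto a neighborhood of its convex core, the set $X:=\CN_{L/2}(\TCC(M_H))$ carries $\pi_1$: the inclusion $X\hookrightarrow M_H$ is a $\pi_1$-isomorphism (nearest-point retraction onto the convex set $\CN_r(\hull(H))$). Via $\pi$, $X$ embeds isometrically in $M$; write $Y=\pi(X)$. Then $\pi_1(Y)\to\pi_1(M)$ is injective with image $H$, because $\pi|_X$ is an isometry onto its image and $\pi$ is the covering corresponding to $H$. If $H$ is not finitely generated, $\CC(M_H)$ is noncompact; we would first deal with that case, or reduce to it, by exhausting $H$ by finitely generated free factors. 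I expect the statement is only needed for finitely generated $H$.

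\textbf{Step 2: the frontier of $Y$ in $\CC(M)$ lies in handles.} Consider points of $\CC(M)$ near $\partial Y$ but outside $Y$. By Bowditch's decomposition quoted above,
\[
\CC(M)\subset M_{\leq\eta}\cup W\cup\CN_\eta(C),
\]
with $\eta$ depending only on the topology of $M$. We choose $L\gg\eta+D(\eta)$, where $D(\eta)$ is from Lemma \ref{Lemma-TCCNeighborhood}.
\begin{itemize}
\item A point $p\in M_{\leq\eta}$ near $Y$ has a short loop through it. Lifting this loop to $M_H$ via the isometric embedding of a large neighborhood gives a point of $(M_H)_{\leq\eta}$. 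This point lies within $D(\eta)$ of $\TCC(M_H)$, hence deep inside $X$.
\item Similarly, points of $\CC(M)$ near $Y$ lift to points near the convex hull of $H$, up to controlled error. The reason is that geodesics in $M$ between limit-set-type points of $Y$ lift.
\end{itemize}
So $\partial Y\cap\CC(M)$ avoids $M_{\leq\eta}$ and lies in $W\cup\CN_\eta(C)$, i.e., in the $\xi$-handles and bounded neighborhoods of their compressing disks. We then isotope $\partial Y\cap\CC(M)$ so that it meets $\CC(M)$ exactly in a union of compressing disks $D^h_i$; there are at most $2\rank(M)$ of them.

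\textbf{Step 3: conclude.} Cutting $\CC(M)$ along a family of disjoint properly embedded disks expresses $\pi_1(M)$ as a graph of groups with trivial edge groups. Hence $\pi_1(M)$ is a free product of the vertex groups and a free group. One vertex piece is $Y\cap\CC(M)$, whose $\pi_1$ is $H$ by Step 1, so $H$ is a free factor of $\Gamma$.

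The main obstacle is Step 2: controlling exactly where $\partial Y$ sits and making it meet $\CC(M)$ only in disks. This needs the quantitative Bowditch decomposition together with the uniform constant of Lemma \ref{Lemma-TCCNeighborhood}, and $L$ must be chosen depending only on these topological constants.
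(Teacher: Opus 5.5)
Your overall architecture matches the paper's. You use Bowditch's covering $\CC(M)\subset M_{\leq\eta}\cup W\cup\CN_\eta(C)$, then Lemma \ref{Lemma-TCCNeighborhood} to push a level surface of the distance to $\TCC(M_H)$ into $M_{>\eta}$ (your first bullet is the paper's argument run contrapositively), then cut $\CC(M)$ along compressing disks. The gap is exactly the step you flag. At a single fixed level $\partial Y$ you only know $\partial Y\cap\CC(M)\subset W\cup\CN_\eta(C)$. The portion lying in $\CN_\eta(C)-W$ sits in the body of $\CC(M)$ outside the handles, where there is no product structure. Nothing forces that portion to consist of disks, and ``isotope it to compressing disks'' has no mechanism behind it. Any such isotopy would also have to keep $Y$ inside the embedded neighborhood and keep it carrying $H$. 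Your second bullet, that points of $\CC(M)$ near $Y$ lift near $\hull(H)$, is unjustified and does not address this, since $\CC(M)$ genuinely continues past $\partial Y$ through the handles.

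The missing idea is to \emph{choose the level} rather than isotope. Let $D=D(\eta+1)$ be the constant from Lemma \ref{Lemma-TCCNeighborhood} and $S_t=\partial\CN_t(\TCC(M_H))$. Three facts combine: $C$ consists of at most $2\rank(M)$ disks $E$, each with $\diam\CN_\eta(E)\leq 2\eta+\xi$, and distance to $\TCC(M_H)$ is $1$-Lipschitz. Hence each $\CN_\eta(E)$ meets only those $\pi(S_t)$ with $t$ in a window of length at most $2\eta+\xi$. By pigeonhole, some $t\in[D,D+(2\eta+\xi)\cdot 2\rank(M)+1]$ has $\pi(S_t)\cap\CC(M)\subset W$. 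This is why the paper's $L$ contains the term $(2\eta+\xi)\cdot2\rank(M)$, a dependence on $\rank(M)$ that your ``$L\gg\eta+D(\eta)$'' omits.

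With such a $t$, no isotopy is needed. In each handle that $\pi(S_t)$ meets, pick one compressing disk of diameter at most $\xi$ meeting $\pi(S_t)$. Let $X$ be the component of $\CC(M)$ minus these disks that contains $\pi(\CC(M_H))$, and use $\pi(S_t)$ only to trap $X$ inside $\pi(\CN_L(\TCC(M_H)))$. Then $\pi_1(X)$ is a free factor, and its image in $\pi_1(M)$ is $H$ by a sandwich: $X\supset\pi(\CC(M_H))$, and loops in $X$ lift to $M_H$ and homotope into $\CC(M_H)$. Your Step 3 needs this sandwich too, since Step 1 computes $\pi_1(Y)$, not $\pi_1$ of the cut piece.

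Finally, the infinitely generated case is moot. The hypothesis makes $\pi(\CC(M_H))$ a closed subset of the compact $\CC(M)$, so $\CC(M_H)$ is compact and $H$ is convex cocompact.
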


Note that this conclusion does not hold if one uses $\CC(M_H)$ rather than $\TCC(M_H)$. Indeed, if $H$ is a cyclic group generated by a loxodromic isometry $\psi$, then $\CC(M_H)$ consists of a single closed geodesic. If $\ell(\psi)$ is sufficiently small, then a high radius metric neighborhood of $\CC(M_H)$ always includes into $M$ as a Margulis tube, though $\Gamma$ may not split as a non-trivial free product. 

\begin{proof}[Proof of Proposition \ref{prop-CCEmbedFactor}]
    It suffices to assume that $\Gamma$ is not cyclic or conjugate to a subgroup of $\Isom^+(\BH^2)$, the latter case following from standard area bounds. Let $\eta>0$ be the constant given by Bowditch in \cite{BOW} so that $\CC(M)\subset M_{\leq\eta}\cup W\cup\CN_\eta(C)$. As in Lemma \ref{Lemma-TCCNeighborhood}, fix $D=D(\eta+1)>0$ so that $(M_H)_{\leq\eta +1}\subset\mathcal{N}_D(\TCC(M_H))$. Define \[L=D+(2\eta+\xi)\cdot2\rank(M)+\eta+2\] and suppose that $\mathcal{N}_L(\TCC(M_H))$ embeds isometrically into $M$ under $\pi$.\par

    Set \[N=\{p\in M_H \;|\; D\leq d(p, \TCC(M_H))\leq L-(\eta+1)\}\subset M_H\] and for $t\geq 0$, define \[S_t=\partial \CN_t(\TCC(M_H)).\] Note that $N$ separates $M_H$ and $\pi(N)$ separates $M$. By the choice of $D$, we have $N\subset(M_H)_{>\eta}$. For any $q\in N$, $B_{M_H}(q,\eta/2)$ therefore isometrically embeds into $M$ under $\pi$, so we have $\pi(N)\subset M_{>\eta}$. Thus, $\CC(M)$ may only intersect $\pi(N)$ in $W\cup\CN_\eta(C)$, by the choice of $\eta$. \par

    \begin{figure}
        \centering
        \includegraphics[width=0.5\linewidth]{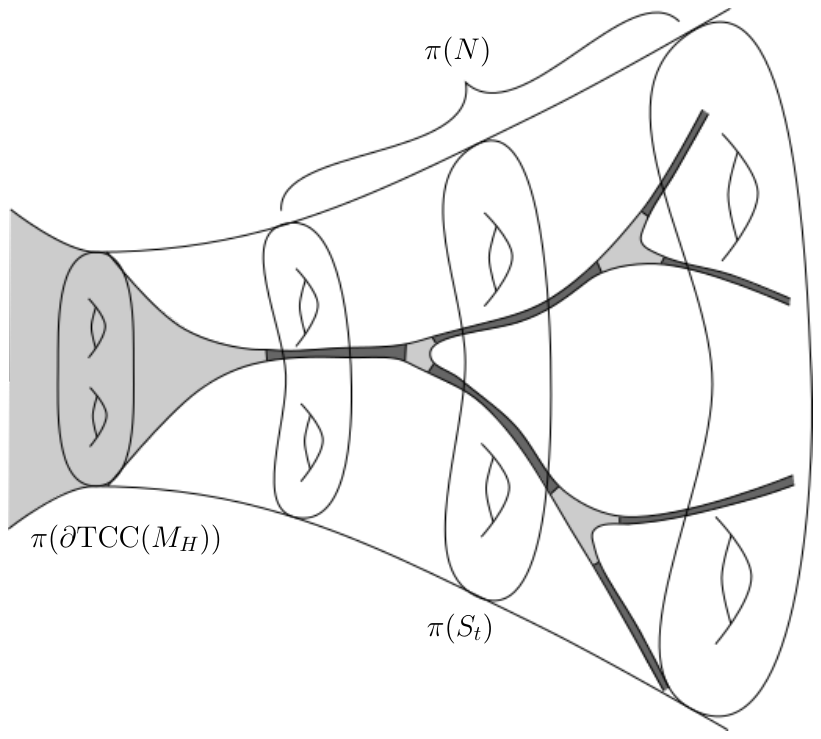}
        \caption{A sketch of the proof of Proposition \ref{prop-CCEmbedFactor}. $W$ is shown in dark gray and $\CC(M)- W$ is shown in light gray.}
        \label{fig:FreeFactor}
    \end{figure}

    For each compressing disk $D\subset C$, $\diam(\CN_\eta(D))\leq2\eta+\xi$. Thus, since $C$ consists of at most $2\rank(M)$ such disks, it follows from the choice of $L$ and $N$ that there exists some $t\in[D,D+(2\eta+\xi)\cdot2\rank(M)+1]$ such that $S_t\subset N$ and \[\pi(S_t)\cap\CC(M)\subset W.\] A cartoon of this is shown in Figure \ref{fig:FreeFactor}. For each $\xi$-handle $h$ that intersects $\pi(S_t)$, choose one compressing disk $D_h\subset h$ that has diameter at most $\xi$, intersects $\pi(S_t)$, and is properly embedded in $\CC(M)$. Define $\mathcal{C}\subset W$ to be the disjoint union of these disks $D_h$.\par

    Let $X$ be the component of $\CC(M)- \mathcal{C}$ containing $\pi(\CC(M_H))$. Fix $p'\in \CC(M_H)$ and let $p=\pi(p')$. Observe that $\pi_1(X,p)$ is a free factor of $\pi_1(\CC(M),p)$, since $X$ is a component of the result of removing a disjoint union of disks from $\CC(M)$. Additionally, we have \[X\subset \pi(\mathcal{N}_L(\TCC(M_H))),\] since $\pi(S_t)$ separates $\pi(\CC(M_H))$ from $\partial\CN_L(\TCC(M_H))$, $\pi(S_t)\cap\CC(M)\subset W$, and $\mathcal{C}$ separates the $\xi$-handles that intersect $\pi(S_t)$.\par

    Where $p'\in \CC(M_H)$ is as above, any loop $\beta':[0,1]\rightarrow M_H$ with $\beta'(0)=\beta'(1)=p'$ is pointed homotopic into $\CC(M_H)$. Thus, since $X\subset \pi(\mathcal{N}_L(\TCC(M_H)))$, any loop $\beta:[0,1]\rightarrow X$ with $\beta(0)=\beta(1)=p$ is pointed homotopic in $M$ into $\pi(\CC(M_H))$. Therefore, we see that $\pi_1(X,p)=\pi_1(\pi(\CC(M_H)),p)$. Let $\pi_*:\pi_1(M_H,p')\rightarrow\pi_1(M,p)$ be the injective homomorphism induced from the projection map $\pi$, and note that \[\pi_*(\pi_1(M_H,p'))=\pi_1(\pi(\CC(M_H)),p)=\pi_1(X,p).\] Thus, we have shown that $\pi_*(\pi_1(M_H,p'))$ is a free factor of $\pi_1(\CC(M),p)=\pi_1(M,p)$. The subgroup $\pi_*(\pi_1(M_H,p'))$ maps to $H$ under an appropriate holonomy representation $\pi_1(M,p)\longrightarrow \Gamma$, confirming that $H$ is a free factor of $\Gamma$.
\end{proof}

The general strategy for proving the \hyperlink{thm:decomp}{Chromatography Theorem} will be to use Proposition \ref{prop-CCEmbedFactor} to justify that $J_{1,t}^*(P(1))$ is a free factor of $P(t)$, for a path $P:[0,1]\longrightarrow\CD_3$ as in the theorem. To apply Proposition \ref{prop-CCEmbedFactor}, we will first need to be able to control $\TCH(J_{1,t}(P(1)))$ for $t$ close to $1$. The next two lemmas will address this.\par

For the next lemma, it will be useful to have the language of Chabauty convergence of closed sets in $\mathbb{H}^3$. Letting $\mathcal{C}(\mathbb{H}^3)$ denote the set of closed subsets of $\mathbb{H}^3$, the conditions for convergence in the Chabauty topology on $\mathcal{C}(\mathbb{H}^3)$ directly parallel those for convergence in the Chabauty topology on $\CD_n$, as described in Definition \ref{def:Chabauty}. More precisely, we have $X_n\rightarrow X$ in $\mathcal{C}(\mathbb{H}^3)$ if (i) every accumulation point of a sequence $x_n\in X_n$ is contained in $X$, and (ii) every $x\in X$ is the limit of a sequence $x_n\in X_n$. See \cite[Chapter E]{BEN} for more information on the Chabauty topology.\par

For a non-trivial convex cocompact group $\Gamma\in\CD_3$, we also let $\mathcal{P}(\Gamma)$ denote the \textit{Dirichlet fundamental polyhedron} for $\Gamma$ based at a fixed basepoint $O\in\mathbb{H}^3$. That is, \[\CP(\Gamma):=\{p\in\BH^3\;|\;d(p, O)\leq d(\psi(p),O)\;\forall\psi\in\Gamma\}.\] We also define \[\sys(\Gamma):=\min_{\psi\in\Gamma-\{\id\}}\ell(\psi),\] which is frequently called the \textit{systole} of $\Gamma\backslash \mathbb{H}^3$. \par

For a group $\Gamma\in\CD_3$, we say that a sequence of discrete injective representations $\rho_n:\Gamma\longrightarrow\Isom^+(\BH^3)$ \textit{converges strongly} to a discrete injective representation $\rho:\Gamma\longrightarrow\Isom^+(\BH^3)$ if $\rho_n(\psi)\rightarrow\rho(\psi)$ for all $\psi\in\Gamma$, and $\rho_n(\Gamma)\rightarrow\rho(\Gamma)$ in $\CD_3$. A discrete injective representation $\rho:\Gamma\longrightarrow\Isom^+(\BH^3)$ is said to be \textit{convex cocompact} if its image is convex cocompact.

\begin{lemma}
    \label{lemma-CCSeqFacts}
    For any convex cocompact representation $\rho:\Gamma\longrightarrow\Isom^+(\BH^3)$ and $D\geq0$, there exists a compact set $K\subset\BH^3$ such that for any sequence $\rho_n:\Gamma\longrightarrow\Isom^+(\BH^3)$ of convex cocompact representations converging strongly to $\rho$, we have $\mathcal{N}_D(\TCH(\rho_n(\Gamma)))\subset\rho_n(\Gamma)\cdot K$ for $n$ sufficiently large.
\end{lemma}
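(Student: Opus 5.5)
The plan is to show that, for $n$ large, $\TCH(\rho_n(\Gamma))$ stays a uniformly bounded distance from a fixed compact fundamental-type region translated by $\rho_n(\Gamma)$. Concretely, I will take $K$ to be a large closed ball $\overline{B}(O,R)$ about the basepoint $O$. It then suffices to show two things for $n$ large: first, every point of $\mathcal{N}_D(\TCH(\rho_n(\Gamma)))$ lies within distance $R_0$ of $\hull(\rho_n(\Gamma))$, for some $R_0$ independent of $n$; second, $\hull(\rho_n(\Gamma))\subset\rho_n(\Gamma)\cdot \overline{B}(O,R_1)$. Then $R=R_0+R_1$ works.

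For the first step, I will bound the thin part. Strong convergence to a convex cocompact (hence purely loxodromic) $\rho(\Gamma)$ should give a uniform lower bound $\sys(\rho_n(\Gamma))\geq s_0>0$ for large $n$. Otherwise there would be elements $\rho_n(\psi_n)$ with $\ell\to0$. The points of their axes near the convex hull, pushed into a compact set, would produce (via condition 1 of Chabauty convergence) a limit element of $\rho(\Gamma)$ that is elliptic, parabolic, or trivial, contradicting torsion-freeness, convex cocompactness, or discreteness. The case of a trivial limit needs Lemma~\ref{Unbhd}/Margulis-type reasoning, since short elements converging to $\id$ are incompatible with $\rho_n(\Gamma)\to\rho(\Gamma)$. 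To make this work, I first need to know that short axes meet a bounded region mod $\rho_n(\Gamma)$; that is essentially step two. Given the systole bound, Lemma~\ref{lemma-LoxodromicFacts}(1) puts $T_1(\psi)$ within $R(s_0)$ of $A_\psi\subset\hull(\rho_n(\Gamma))$. Hence $\mathcal{N}_D(\TCH(\rho_n(\Gamma)))\subset\mathcal{N}_{D+R(s_0)}(\hull(\rho_n(\Gamma)))$.

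For the second step, I will show the convex cores $\CC(\rho_n(\Gamma)\backslash\BH^3)$ have uniformly bounded diameter and meet a fixed compact set. This is the main obstacle. The approach I would try first uses stability of convex cocompactness: strong convergence to a convex cocompact $\rho$ implies the algebraic convergence is in the interior of the convex cocompact locus. In that locus the $\rho_n$ are conjugated by equivariant $(1+\epsilon_n)$-quasiconformal maps, which give $k_n$-bilipschitz equivariant maps $\BH^3\to\BH^3$ with $k_n\to1$ and moving $O$ a bounded amount (Sullivan/Marden stability, cf.\ \cite{MAT}). Such maps send $\hull(\rho(\Gamma))$ into a uniform neighborhood of $\hull(\rho_n(\Gamma))$ and vice versa, by Morse stability of quasi-geodesics and quasi-convex hulls. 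Since $\hull(\rho(\Gamma))\subset\rho(\Gamma)\cdot\overline{B}(O,R_2)$ by cocompactness, equivariance transports this to $\hull(\rho_n(\Gamma))\subset\rho_n(\Gamma)\cdot\overline{B}(O,R_1)$ with $R_1$ independent of $n$.

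The same bilipschitz conjugacy also yields the systole bound in step one directly, because translation lengths change by a factor of at most $k_n$. This avoids the circularity noted there. Combining the two steps with $K=\overline{B}(O,D+R(s_0)+R_1+1)$ gives $\mathcal{N}_D(\TCH(\rho_n(\Gamma)))\subset\rho_n(\Gamma)\cdot K$ for all large $n$.

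If invoking quasiconformal stability is undesirable, my fallback is a Dirichlet-polyhedron argument. Convergence $\rho_n(\Gamma)\to\rho(\Gamma)$ in $\CD_3$ with $\rho(\Gamma)$ convex cocompact should make $\CP(\rho_n(\Gamma))\cap\hull(\rho_n(\Gamma))$ converge in $\mathcal{C}(\BH^3)$ to the compact set $\CP(\rho(\Gamma))\cap\hull(\rho(\Gamma))$. However, controlling hulls under Chabauty convergence is exactly where that route may fail, which is why I would prefer the stability approach.
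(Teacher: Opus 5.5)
Your argument is correct, but your main route differs from the paper's, and your ``fallback'' is essentially the paper's proof. In Step 1 the paper cites J{\o}rgensen--Marden for Hausdorff convergence of limit sets and convergence of Dirichlet polyhedra $\CP(\rho_n(\Gamma))\to\CP(\rho(\Gamma))$. It then cites Bowditch to turn the limit-set convergence into Chabauty convergence $\hull(\rho_n(\Gamma))\to\hull(\rho(\Gamma))$. That is exactly the hull-control step you feared would fail. The paper finishes Step 1 with convexity of $A_n=\CP(\rho_n(\Gamma))\cap\hull(\rho_n(\Gamma))$: points $p_n\in A_n$ converge to a point of $A$, which gives $A_n\subset\CN_\varepsilon(A)$ eventually. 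Only then does it get the systole bound, in fact $\sys(\rho_n(\Gamma))\to\sys(\rho(\Gamma))$, from this compact region, the first Chabauty condition and Lemma~\ref{Unbhd}. This is the ordering you noticed was needed to avoid circularity. It ends with Lemma~\ref{lemma-LoxodromicFacts}(1), as you do.

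Your main route replaces all of this with one uniformly bilipschitz equivariant map $F_n$, which controls both hulls (via the Morse lemma) and translation lengths. It needs only algebraic convergence, never the geometric half of strong convergence. The cost is heavier inputs: Marden/Sullivan quasiconformal stability, plus an equivariant bilipschitz extension to $\BH^3$ such as Douady--Earle. Two remarks:
\begin{itemize}
\item You only need uniformly bounded constants, not $k_n\to1$.
\item The claim that $F_n$ moves $O$ a bounded amount needs a line of justification. Since $\rho_n\to\rho$ exactly rather than up to conjugation, the boundary conjugacy must send fixed points of $\rho(\gamma)$ to those of $\rho_n(\gamma)$, and these converge. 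So the conjugacies form a normal family of $K$-quasiconformal maps, and $F_n(O)$ stays bounded. The elementary cyclic case can be handled directly.
\end{itemize}
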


\begin{proof} It suffices to assume that $\Gamma$ is non-trivial, otherwise each $\TCH(\rho_n(\Gamma))=\varnothing$.

    \textit{Step 1:} We first construct a compact set $K'\subset\BH^3$ depending just on $\rho$ such that \[\hull(\rho_n(\Gamma))\subset \rho_n(\Gamma)\cdot K'\] for $n$ sufficiently large. Define \[A_n=\mathcal{P}(\rho_n(\Gamma))\cap \hull(\rho_n(\Gamma)),\;\;\;\;\;\; A=\mathcal{P}(\rho(\Gamma))\cap \hull(\rho(\Gamma)).\] Fixing $\varepsilon>0$ and letting $K'=\mathcal{N}_\varepsilon(A)$, it will suffice to show that $A_n\subset K'$ for $n$ sufficiently large.\par

    A result of J{\o}rgensen and Marden \cite{JORMAR} implies that the limit sets $\Lambda(\rho_n(\Gamma))$ converge to $\Lambda(\rho(\Gamma))$ in the Hausdorff topology on $\partial\overline{\BH^3}$. This, combined with a result of Bowditch on convex hulls \cite{BOWCHS}, implies that $\hull(\rho_n(\Gamma))\rightarrow \hull(\rho(\Gamma))$ in the Chabauty topology on $\mathcal{C}(\mathbb{H}^3)$. J{\o}rgensen and Marden \cite{JORMAR} also establish that strong convergence implies convergence of the Dirichlet fundamental polyhedra. More precisely, it follows that for all $R>0$ \[\CN_R(\{O\})\cap \mathcal{P}(\rho_n(\Gamma))\rightarrow \CN_R(\{O\})\cap \mathcal{P}(\rho(\Gamma))\] in the Hausdorff topology on compact sets of $\mathbb{H}^3$, as $n\rightarrow\infty$. This implies that we also have $\mathcal{P}(\rho_n(\Gamma))\rightarrow\mathcal{P}(\rho(\Gamma))$ in the Chabauty topology on $\mathcal{C}(\mathbb{H}^3)$.\par

    We will show next that there exists a sequence of points $p_n\in A_n$ converging in $\mathbb{H}^3$ to a point $p\in A$. Choose $p\in A$ so that $p$ is in the interior of $\mathcal{P}(\rho(\Gamma))$. By the Chabauty convergence $\hull(\rho_n(\Gamma))\rightarrow \hull(\rho(\Gamma))$, there exists a sequence $p_n\in \hull(\rho_n(\Gamma))$ with $p_n\rightarrow p$. We see from the convergence of Dirichlet polyhedra discussed in the preceding paragraph that $p_n$ must be contained in the interior of $\mathcal{P}(\rho_n(\Gamma))$ for $n$ sufficiently large, so $p_n\in A_n$ for $n$ sufficiently large.\par

    Now, suppose for contradiction that after passing to a subsequence, there exists a sequence $x_n\in A_n$ with $d(x_n,A)>\varepsilon$ for all $n$. By potentially replacing each $x_n$ with a point on the geodesic segment between $p_n$ and $x_n$, the convexity of $A_n$ and the compactness of $A$ allow us to assume that the sequence $\{x_n\}$ is bounded. Hence, the sequence $\{x_n\}$ has an accumulation point $x\in\mathbb{H}^3$, so the Chabauty convergence of the sequences $\hull(\rho_n(\Gamma))\rightarrow \hull(\rho(\Gamma))$ and $\mathcal{P}(\rho_n(\Gamma))\rightarrow\mathcal{P}(\rho(\Gamma))$ implies that $x\in A$. It must also be the case that $d(x,A)\geq \varepsilon$, so we have a contradiction.\par

    \textit{Step 2:} We show next that $\sys(\rho_n(\Gamma))\rightarrow\sys(\rho(\Gamma))$ as $n\rightarrow\infty$. First, pick $\psi\in\Gamma$ such that $\ell(\rho(\psi))=\sys(\rho(\Gamma))$. Since $\rho_n(\psi)\rightarrow\rho(\psi)$ in $\Isom^+(\BH^3)$, we have $\ell(\rho_n(\psi))\rightarrow\ell(\rho(\psi))$ as $n\rightarrow\infty$, so
    \begin{equation}
    \label{eqn-limsup}
        \limsup_n\sys(\rho_n(\Gamma))\leq\sys(\rho(\Gamma)).
    \end{equation}
    \par

    By the proof of step 1, there exists a compact set $K'\subset\mathbb{H}^3$ so that $K'$ contains a fundamental domain for the action of $\rho_n(\Gamma)$ on $\hull(\rho_n(\Gamma))$ for sufficiently large $n$. For such $n$, there is some $\psi_n\in\rho_n(\Gamma)$ so that $\ell(\psi_n)=\sys(\rho_n(\Gamma))$ and the axis $A_{\psi_n}$ intersects $K'$. Let $K''=\mathcal{N}_{\sys(\rho(\Gamma))}(K')$. The preceding paragraph tells us that $\sys(\rho_n(\Gamma))<2\sys(\rho(\Gamma))$ for $n$ sufficiently large; hence, for such $n$, $K''$ contains a geodesic segment $\alpha_n\subset A_{\psi_n}$ so that $A_{\psi_n}=\langle\psi_n\rangle\cdot\alpha_n$. In particular, we see $\psi_n\cdot K''\cap K''\neq\varnothing$ for $n$ sufficiently large, so it follows that the sequence $\{\psi_n\}_n\subset\Isom^+(\BH^3)$ is bounded.\par

     Now, choose a subsequence $\{n_i\}_i$ so that \[\liminf_n\sys(\rho_n(\Gamma))=\lim_{i\rightarrow\infty}\sys(\rho_{n_i}(\Gamma))=\lim_{i\rightarrow\infty}\ell(\psi_{n_i}).\] Since the sequence $\{\psi_{n_i}\}_i\subset\Isom^+(\BH^3)$ is bounded, there is a further subsequence, which we denote the same way, converging to some $\psi\in\Isom^+(\BH^3)$. Since $\rho_n(\Gamma)\rightarrow\rho(\Gamma)$ in $\CD_3$, the first condition for Chabauty convergence in $\CD_3$ in Definition \ref{def:Chabauty} implies that we must have $\psi\in\rho(\Gamma)$. Since each $\psi_{n_i}\neq\id$, it follows from Lemma \ref{Unbhd} that also $\psi\neq\{\id\}$. We then conclude that \begin{equation}
         \label{eqn-liminf}
         \sys(\rho(\Gamma))\leq\ell(\psi)=\lim_{i\rightarrow\infty}\ell(\psi_{n_i})=\liminf_n\sys(\rho_n(\Gamma)).
     \end{equation}
     Combining Equations \ref{eqn-limsup} and \ref{eqn-liminf} completes step 2.\par

     \textit{Step 3:} Next, we show that there exists $R=R(\rho)$ such that $\BH^3_{\leq 1}(\rho_n(\Gamma))\subset \mathcal{N}_R(\hull(\rho_n(\Gamma)))$ for $n$ sufficiently large. Fix $\delta$ with $0<\delta<\sys(\rho(\Gamma))$, and note that step 2 implies that $\sys(\rho_n(\Gamma))\geq\delta$ for $n$ sufficiently large. Let $R=R(\delta)$ be the constant given by Lemma \ref{lemma-LoxodromicFacts}(1) with respect to translation length $\delta$; for large $n$ and any $\psi\in\rho_n(\Gamma)-\{\id\}$, we then have $\ell(\psi)\geq\delta$, so the choice of $R$ implies that $T_1(\psi)\subset\mathcal{N}_R(A_\psi)$. Since $A_\psi\subset \hull(\rho_n(\Gamma))$, we have confirmed that $\BH^3_{\leq 1}(\rho_n(\Gamma))\subset\mathcal{N}_R(\hull(\rho_n(\Gamma)))$.\par
     
     \textit{Step 4}: We now finish the proof. Fix $D\geq 0$ and define $K=\mathcal{N}_{R+D}(K')$, where $K'\subset\mathbb{H}^3$ and $R>0$ are as in steps 1 and 3, respectively. Then, we see \[\mathcal{N}_D(\TCH(\rho_n(\Gamma)))\subset \mathcal{N}_{R+D}(\hull(\rho_n(\Gamma)))\subset \rho_n(\Gamma)\cdot\mathcal{N}_{R+D}(K')\] for $n$ sufficiently large, so the result follows.
\end{proof}

Before continuing to the next lemma, we recall that for any path $P:[0,1]\longrightarrow\CD_3$ and $s,t\in [0,1]$, Subsection \ref{subsec:path machinery} described a map $J_{s,t}:\Gamma_s\longrightarrow\overline{\Isom^+(\BH^3)}$ with image in $\Gamma_t\cup\{\infty\}$, where each $\Gamma_{t'}:=P(t')$. Maps $J_{s,t}^*:\Gamma_s\longrightarrow\Isom^+(\BH^3)$ were then defined with $J_{s,t}^*(\Gamma_s)=J_{s,t}(\Gamma_s)\cap\Isom^+(\BH^3)$, and Lemma \ref{Lemma-SubgroupConvg} says that for any subgroup $H\leq\Gamma_s$ and any $t\in[0,1]$, $J_{s,t}^*(H)$ is a subgroup of $\Gamma_t$.

\begin{lemma}
\label{lemma-PathTCCEmbed}
    Suppose $P:[0,1]\longrightarrow\CD_3$ is a path in the Chabauty topology such that $P(1)$ is convex cocompact. For each $t$, set \[M_t=P(t)\backslash\mathbb{H}^3,\hspace{.3in}M_t^J=(J_{1,t}^*(P(1)))\backslash\mathbb{H}^3,\] and let $\pi_t:M_t^J\rightarrow M_t$ be the induced covering map. Then, for any $D>0$, $\mathcal{N}_D(\TCC(M_t^J))$ isometrically embeds into $M_t$ under $\pi_t$, for $t$ sufficiently close to $1$.
\end{lemma}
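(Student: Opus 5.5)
Write $\Gamma_t=P(t)$ and $H_t:=J_{1,t}^*(\Gamma_1)\leq\Gamma_t$. Since $\Gamma_1$ is convex cocompact it is finitely generated, so by Lemma \ref{lem:injhomo} the map $J_{1,t}|_{\Gamma_1}$ is finite valued for $t$ near $1$. Hence $\rho_t:=J_{1,t}|_{\Gamma_1}$ is an injective homomorphism with $H_t=\rho_t(\Gamma_1)$, and $t\mapsto\rho_t$ is continuous with $\rho_1=\id$. Lemma \ref{Lemma-SubgroupConvg} gives $H_t\to\Gamma_1$ in $\CD_3$, so $\rho_t\to\rho_1$ strongly.

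The first step is to see that $H_t$ is convex cocompact for $t$ near $1$, so that Lemma \ref{lemma-CCSeqFacts} applies. Convex cocompactness is an open condition in the representation variety (Marden's stability theorem), so I would invoke that. Then I argue by contradiction. Suppose there are $t_n\to1$ for which $\CN_D(\TCC(M^J_{t_n}))$ fails to embed. The embedding fails exactly when two points of $\CN_D(\TCH(H_{t_n}))$ differ by an element of $\Gamma_{t_n}-H_{t_n}$. In fact the cover restricted to a region that is a union of $H$-translates fails to embed iff some $g\in\Gamma_{t_n}-H_{t_n}$ has $g\cdot\CN_D(\TCH(H_{t_n}))\cap\CN_D(\TCH(H_{t_n}))\neq\varnothing$. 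The local-isometry part is automatic, since $\pi_t$ is a covering.

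Lemma \ref{lemma-CCSeqFacts} gives a compact $K$ with $\CN_D(\TCH(H_{t_n}))\subset H_{t_n}\cdot K$ for large $n$. Multiplying $g$ on either side by elements of $H_{t_n}$, we may therefore assume $g_n\in\Gamma_{t_n}-H_{t_n}$ satisfies $g_nK\cap K\neq\varnothing$. The sequence $g_n$ is then bounded, so after passing to a subsequence $g_n\to g$. By Definition \ref{def:Chabauty} (Condition 1), $g\in\Gamma_1$.

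The main obstacle is to show that $g_n$ nevertheless lies in $H_{t_n}$ for large $n$, which gives the contradiction. Lemma \ref{Unbhd} provides a neighborhood $U_g$ that contains exactly one element of $\Gamma_{t_n}$ for large $n$. By Proposition \ref{prop:J Facts}(3) that element is $J_{1,t_n}(g)=\rho_{t_n}(g)$, because $\rho_{t_n}(g)\to g$ and this element lies in $\Gamma_{t_n}$. Since $g_n\in U_g\cap\Gamma_{t_n}$ for large $n$, we get $g_n=\rho_{t_n}(g)\in H_{t_n}$, contradicting the choice of $g_n$. The case $\Gamma_1$ trivial is vacuous: there $\TCH=\varnothing$, and I would also check that $\TCC$ is still defined when $\Gamma_1$ is cyclic. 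The only point needing care is the uniformity in $n$ of the compact set $K$, and Lemma \ref{lemma-CCSeqFacts} is stated precisely to provide it.
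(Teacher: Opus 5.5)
Your proposal is correct and follows essentially the same route as the paper: finite-valuedness via Lemma \ref{lem:injhomo}, convex cocompactness via Marden's stability theorem, the uniform compact set $K$ from Lemma \ref{lemma-CCSeqFacts}, and then a bounded-sequence argument with Lemma \ref{Unbhd}. That argument shows that any element of $\Gamma_t$ moving $K$ to meet itself must, for $t$ near $1$, equal $J_{1,t}$ of its limit and so lie in $J_{1,t}(\Gamma_1)$. Your explicit reduction from non-injectivity on $\CN_D(\TCH(H_t))$ to an element of $\Gamma_t-H_t$ with $gK\cap K\neq\varnothing$ simply spells out the paper's final sentence.
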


\begin{proof}
    We let $\Gamma_t$ denote $P(t)$ for all $t$. By Lemma \ref{lem:injhomo}, $J_{1,t}$ is an injective homomorphism into $\Isom^+(\BH^3)$ for $t$ sufficiently close to $1$, in which case $J_{1,t}^*(\Gamma_1)=J_{1,t}(\Gamma_1)$. Additionally, Marden's stability theorem \cite{MAR} implies that $J_{1,t}(\Gamma_1)$ is convex cocompact for $t$ sufficiently close to $1$, and hence for such $t$ we have that $J_{1,t}:\Gamma_1\longrightarrow\Isom^+(\BH^3)$ is a convex cocompact representation. These representations strongly converge to the identity map, by Proposition \ref{prop:J Facts}(3) and the Chabauty topology continuity given by Lemma \ref{Lemma-SubgroupConvg}. Lemma \ref{lemma-CCSeqFacts} then implies that there exists a compact set $K\subset\mathbb{H}^3$ and $t_0\in[0,1)$ so that $\mathcal{N}_D(\TCH(J_{1,t}(\Gamma_1)))\subset J_{1,t}(\Gamma_1)\cdot K$ for all $t\geq t_0$.\par

    Now, let $\{t_n\}\subset[t_0,1)$ be a sequence with $t_n\rightarrow 1$ and suppose that for each $n$, $\psi_n\in\Gamma_{t_n}$ satisfies $\psi_n\cdot K\cap K\neq\varnothing$. Observe that the sequence $\{\psi_n\}\subset\Isom^+(\BH^3)$ must be bounded, since $K$ is compact. Passing to a subsequence, there then exists $\psi\in\Isom^+(\BH^3)$ such that $\psi_n\rightarrow\psi$, and the first condition in Definition \ref{def:Chabauty} for convergence in the Chabauty topology tells us that $\psi\in\Gamma_1$.\par

    By Lemma \ref{Unbhd}, there exists a neighborhood $U_{\psi}\subset\Isom^+(\BH^3)$ and $t_0'\geq t_0$ such that $|\Gamma_t\cap U_{\psi}|=1$ for all $t\geq t_0'$. Recall that for $t\geq t_0'$, $J_{1,t}(\psi)$ is defined to be the unique element in $\Gamma_t\cap U_\psi$. Since $\psi_n\rightarrow\psi$, we must have $\psi_n\in U_\psi$ and therefore $\psi_n=J_{1,t_n}(\psi)$, for $n$ sufficiently large.\par

    Thus, we conclude that for $t$ sufficiently close to $1$, if $\varphi\in\Gamma_t$ satisfies $\varphi\cdot K\cap K\neq\varnothing$, then we must have $\varphi\in J_{1,t}(\Gamma_1)$. It follows that for such $t$, if $\varphi'\in\Gamma_t$ satisfies
    \[\varphi'\cdot\mathcal{N}_D(\TCH(J_{1,t}(\Gamma_1)))\cap \mathcal{N}_D(\TCH(J_{1,t}(\Gamma_1)))\neq\varnothing,\] then $\varphi'\in J_{1,t}(\Gamma_1)$, which implies the claim.
\end{proof}

The following lemma will examine paths of convex cocompact Kleinian groups along which the isomorphism type of the group only changes at a single time. More precisely, we will consider a path $P:[0,1]\longrightarrow\CD_3$ for which $J_{0,t}$ maps $\Gamma_0:=P(0)$ isomorphically onto $\Gamma_t:=P(t)$ for all $t<1$, but perhaps some paths $t\mapsto J_{0,t}(\psi)$ diverge to $\infty$ as $t\rightarrow1$. We then ``pull back" the limit group $\Gamma_1$ to each $\Gamma_t$ using the map $J_{1,t}$, and examine how (the isomorphic image of) $\Gamma_1$ sits inside these approximating groups.\par

Recall that a path $P:I\longrightarrow\CD_3$ is said to be \textit{convex cocompact} if $P(t)$ is convex cocompact for every $t\in I$.

\begin{lemma}
    \label{Lemma-PathFreeFactorIso}
    If $P:[0,1]\longrightarrow\CD_3$ is a convex cocompact path such that $J_{0,t}(P(0))=P(t)$ for all $t<1$, then $J_{1,0}(P(1))$ is a free factor of $P(0)$.
\end{lemma}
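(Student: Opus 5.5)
The plan is to combine Lemma \ref{lemma-PathTCCEmbed} with Proposition \ref{prop-CCEmbedFactor}, and then transport the free factor from a time near $1$ back to time $0$ using the maps $J_{0,t}$.

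First, let $L>0$ be the constant that Proposition \ref{prop-CCEmbedFactor} attaches to the topology of $M_0=\Gamma_0\backslash\BH^3$, where $\Gamma_t:=P(t)$. Since $J_{0,t}(\Gamma_0)=\Gamma_t$ for all $t<1$, Lemma \ref{lem:injhomo} shows that $J_{0,t}$ is finite valued on $\Gamma_0$ for every $t<1$. Hence $J_{0,t}:\Gamma_0\to\Gamma_t$ is an isomorphism for $t<1$. Each $\Gamma_t$ is convex cocompact, so the manifolds $M_t$ are convex cocompact with fundamental group isomorphic to $\Gamma_0$ and are therefore homeomorphic to $M_0$. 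To be safe, I would check that the constant in Proposition \ref{prop-CCEmbedFactor} depends only on quantities such as $\rank(\Gamma)$ and the Bowditch constant $\eta$, and that these are the same for all $M_t$. If $\eta$ depends on the topology beyond the isomorphism type, note that convex cocompact manifolds with isomorphic groups deform through the path, and the paper already treats $\eta$ as depending on $\xi$ and the topology. So I will assume a single $L$ works for all $t<1$.

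Second, apply Lemma \ref{lemma-PathTCCEmbed} with $D=L$: for $t$ close to $1$, $\CN_L(\TCC(M_t^J))$ embeds isometrically in $M_t$ under $\pi_t$. Proposition \ref{prop-CCEmbedFactor} with $H=J_{1,t}^*(\Gamma_1)=J_{1,t}(\Gamma_1)\le\Gamma_t$ then shows that $J_{1,t}(\Gamma_1)$ is a free factor of $\Gamma_t$ for such a fixed $t<1$.

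Third, pull this back to time $0$ through the isomorphism $J_{t,0}=J_{0,t}^{-1}$. By Proposition \ref{prop:J Facts}(5), $J_{t,0}$ is finite valued on $\Gamma_t$, since $J_{0,t}$ is a bijection with $J_{t,0}\circ J_{0,t}=\id$. An isomorphism carries free factors to free factors, so $J_{t,0}(J_{1,t}(\Gamma_1))$ is a free factor of $\Gamma_0$. It remains to identify this group with $J_{1,0}(\Gamma_1)$, that is, to show $J_{t,0}\circ J_{1,t}=J_{1,0}$ on $\Gamma_1$. This is Proposition \ref{prop:J Facts}(5) applied with $s=1$, which requires $0\in I^1_\psi$ for every $\psi\in\Gamma_1$.

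I expect this last point to be the main obstacle. Take $\psi\in\Gamma_1$ and set $\varphi=J_{1,t}(\psi)\in\Gamma_t$. Then $\varphi=J_{0,t}(\varphi_0)$ for some $\varphi_0\in\Gamma_0$, and $I^0_{\varphi_0}\supset[0,1)$. Using the cocycle property (5) for $\varphi_0$ and for $\psi$ at times in $(t',1]$, together with the uniqueness in Proposition \ref{prop:J Facts}, the tracked paths $\tau\mapsto J_{1,\tau}(\psi)$ and $\tau\mapsto J_{0,\tau}(\varphi_0)$ agree on a neighbourhood of $t$. Hence $I^1_\psi=I^t_\varphi\ni 0$. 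Consequently $J_{1,0}$ is finite valued, and the composition identity holds, so $J_{1,0}(P(1))$ is a free factor of $P(0)$.
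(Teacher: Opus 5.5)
Your route is the paper's: fix $L$ from Proposition~\ref{prop-CCEmbedFactor}, use Lemma~\ref{lemma-PathTCCEmbed} to find $t<1$ with $J_{1,t}(\Gamma_1)$ a free factor of $\Gamma_t$, and transport back by the isomorphism $J_{t,0}$. Your verification that $J_{t,0}\circ J_{1,t}=J_{1,0}$ on $\Gamma_1$ is correct; the paper simply attributes this to Proposition~\ref{prop:J Facts}(5). It needs no uniqueness. With $\varphi=J_{1,t}(\psi)=J_{0,t}(\varphi_0)$, property (5) at base points $1$ and $0$ gives $1,0\in I^t_\varphi$. Property (5) at base point $t$ then gives $0\in I^1_\psi$ and $J_{1,0}(\psi)=J_{t,0}(\varphi)$.

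The step that is not established is the uniformity of $L$. You need it because $L$ must be fixed before Lemma~\ref{lemma-PathTCCEmbed} produces $t$. Proposition~\ref{prop-CCEmbedFactor}, however, only promises a constant depending on the topology of the manifold it is applied to, here $M_t$.

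\begin{itemize}
\item Your first justification is that $M_t$ is homeomorphic to $M_0$ because both are convex cocompact with isomorphic fundamental groups. This is false in general: homotopy equivalent compact hyperbolizable $3$-manifolds need not be homeomorphic, and books of $I$-bundles give examples.
\item Your fallback ends by simply assuming a single $L$ works.
\end{itemize}

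What makes ``deforming through the path'' rigorous is Marden's stability theorem, which the paper invokes at exactly this point. The map $t\mapsto J_{0,t}$ is a continuous path (Proposition~\ref{prop:J Facts}(3)) of faithful convex cocompact representations of $\Gamma_0$ on $[0,1)$. Every convex cocompact representation has a neighborhood of representations quasiconformally conjugate to it. Hence the homeomorphism type of $M_t$ is locally constant on the connected set $[0,1)$, so $M_t\cong M_0$ for all $t<1$. With that substitution your argument is complete.
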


\begin{proof}
    Let $\Gamma_t$ denote $P(t)$ for all $t$. Since $J_{0,t}(\Gamma_0)=\Gamma_t$ for all $t<1$, $J_{0,t}$ must be finite valued for all $t<1$. Hence, since each $\Gamma_t$ is convex cocompact, it follows from Lemma \ref{lem:injhomo} that for $t\in[0,1)$, the map $t\mapsto J_{0,t}$ is a path of injective convex cocompact representations of $\Gamma_0$ into $\Isom^+(\BH^3)$, with the continuity of this path of representations following from Proposition \ref{prop:J Facts}(3). Setting \[M_t:=\Gamma_t\backslash\BH^3\] for all $t$, it therefore follows from Marden's stability theorem \cite{MAR} that $M_t$ is homeomorphic to $M_0$ for all $t<1$.\par

    Let $D>0$ be the constant given by Proposition \ref{prop-CCEmbedFactor} with respect to the topology of $M_0$. By Lemmas \ref{lem:injhomo} and \ref{lemma-PathTCCEmbed}, there exists $s\in[0,1)$ close to $1$ such that $J_{1,s}$ is finite valued and $\CN_D(\TCC(M_s^J))$ isometrically embeds into $M_s$ under the induced covering map $M_s^J\longrightarrow M_s$, where \[M_s^J:=J_{1,s}(\Gamma_1)\backslash\BH^3.\]  Proposition \ref{prop-CCEmbedFactor} then implies that $J_{1,s}(\Gamma_1)$ is a free factor of $\Gamma_s$. Since $J_{0,s}(\Gamma_0)=\Gamma_s$, Proposition \ref{prop:J Facts}(5) implies that \begin{equation}
    \label{eqn: comp equality}
        J_{1,0}(\Gamma_1)=J_{s,0}(J_{1,s}(\Gamma_1)).
    \end{equation} It also follows that $J_{s,0}$ is an isomorphism from $\Gamma_s$ onto $\Gamma_0$, so since $J_{1,s}(\Gamma_1)$ is a free factor of $\Gamma_s$,  Equation \ref{eqn: comp equality} implies that $J_{1,0}(\Gamma_1)$ is a free factor of $\Gamma_0$.
\end{proof}

The remaining proofs of this section, including the proof of the \hyperlink{thm:decomp}{Chromatography Theorem}, will inductively make use of Lemma \ref{Lemma-PathFreeFactorIso}. In each case, the quantity that we will induct on will be the following slight modification of the \textit{rank} of a group. 

\begin{defn}[$\rank_0$]
\label{def: rank}
    Let $G$ be a finitely generated group. If $G$ is non-trivial, we define $\rank_0(G)$ to be the minimal size of a generating set for $G$. If $G$ is trivial, then $\rank_0(G)=0$.
\end{defn}

That is, $\rank_0$ agrees with the standard notion of rank for non-trivial groups, and we declare that $\rank_0(\{\id\})=0$.

\begin{lemma}
\label{lem: J is CC}
    If $P:[0,1]\longrightarrow\CD_3$ is a convex cocompact path such that $P(t)$ has infinite covolume for all $t$, then for every finitely generated subgroup $H\leq P(0)$, $J_{0,t}^*(H)$ is convex cocompact for all $t$.
\end{lemma}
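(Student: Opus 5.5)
The plan is to reduce to a statement about a single group: for each $t$, $J_{0,t}^*(H)$ is a finitely generated subgroup of the convex cocompact, infinite covolume group $P(t)$, and such subgroups are convex cocompact. For finite generation, recall from the proof of Lemma \ref{Lemma-SubgroupConvg} that $J_{0,t}^*(H)=J_{0,t}(H')$, where $H'=\{\psi\in H\;|\;t\in I_\psi^0\}$ is a subgroup of $H$ and $J_{0,t}|_{H'}$ is an injective homomorphism by Lemma \ref{lem:injhomo}. So it suffices to show that $H'$ is finitely generated; its image in $P(t)$ will then be a finitely generated subgroup of $P(t)$.

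To show that $H'$ is finitely generated, I would use that $P(0)$ is convex cocompact with infinite covolume. Then $M_0=P(0)\backslash\BH^3$ is the interior of a compact irreducible $3$-manifold with nonempty boundary that is not all tori. In particular $P(0)$ is finitely generated, and by the Scott core theorem together with tameness (or directly, since it is a convex cocompact Kleinian group) it is coherent. Hence every subgroup of $P(0)$ that is finitely generated is also finitely presented. This alone does not give finite generation of $H'$, since subgroups of finitely generated groups need not be finitely generated.

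I would instead argue more cleverly. The intervals $I_\psi^0$ are open and contain $0$. Write $H=\langle h_1,\dots,h_k\rangle$. For $t$ in the connected component of $\bigcap_i I_{h_i}^0$ containing $0$, we have $H'=H$. For larger $t$, $H'$ may shrink. Here I would use continuity: $t\mapsto J_{0,t}^*(H)$ is a Chabauty path by Lemma \ref{Lemma-SubgroupConvg}, each value lies in a convex cocompact group, and by Lemma \ref{lem:injhomo} the subgroup $H'$ can also be described as $J_{t,0}^*(J_{0,t}^*(H))$ via Proposition \ref{prop:J Facts}(5). So the real content is to check that $J_{0,t}^*(H)\le P(t)$ is finitely generated. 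I expect this to be the main obstacle. I would first try to show it directly from the definition of $H'$. If that fails, I would invoke a known result that subgroups of this kind of group are finitely generated and then proceed to the final step below.

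With finite generation in hand, the final step is the standard fact that a finitely generated subgroup $G$ of a convex cocompact Kleinian group $\Gamma$ with infinite covolume is convex cocompact. For this I would use the Tameness Theorem (Agol, Calegari--Gabai) and Canary's covering theorem. The quotient $G\backslash\BH^3$ is tame. Any degenerate end of it would, by the covering theorem, force $\Gamma$ to contain a finite index subgroup of a virtual fiber, which would make $\Gamma\backslash\BH^3$ have finite volume, a contradiction. $G$ has no parabolics since $\Gamma$ has none, so $G$ is geometrically finite without cusps, that is, convex cocompact.
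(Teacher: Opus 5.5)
Your reduction is sound: $J_{0,t}^*(H)=J_{0,t}(H')$ with $J_{0,t}|_{H'}$ injective. Your last step (a finitely generated subgroup of an infinite-covolume convex cocompact group is convex cocompact, by the covering theorem) is the same input the paper uses. The gap is the step you yourself flag as the main obstacle: finite generation of $J_{0,t}^*(H)$ for $t\geq s$, where $s$ is the first time at which $J_{0,t}|_H$ stops being finite valued. You never prove it, and the fallback you propose does not exist. There is no theorem saying subgroups of convex cocompact, infinite-covolume Kleinian groups are finitely generated: Schottky groups are free, and the commutator subgroup of $F_2$ is infinitely generated. The definition of $H'$ alone does not help either, since a priori it is just some subgroup of $H$.

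The missing idea is to use finite generation of the \emph{ambient} group $\Gamma_s=P(s)$ at the escape time. Since $\Gamma_s$ is finitely generated, Lemma \ref{lem:injhomo} gives $t<s$ with $J_{s,t}$ finite valued on $\Gamma_s$. Proposition \ref{prop:J Facts}(5) then yields
\[J_{0,t}(H)\cap J_{s,t}(\Gamma_s)=J_{s,t}(J_{0,s}^*(H)).\]
Both groups on the left are finitely generated subgroups of $\Gamma_t$, hence convex cocompact by the covering theorem. Their intersection is therefore convex cocompact by Susskind's theorem, and injectivity of $J_{s,t}$ shows that $J_{0,s}^*(H)$ is finitely generated.

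This handles only the first escape time, so you still have to get past $s$. The paper applies Lemma \ref{Lemma-PathFreeFactorIso} to the path $t\mapsto J_{0,t}^*(H)$ on $[0,s]$. This shows the surviving subgroup $J_{s,0}(J_{0,s}^*(H))$ is a proper free factor of $H$, and the paper then inducts on $\rank_0$ using Grushko's theorem.

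Alternatively, once you have the intersection identity, an open--closed argument also works. Consider the set of $t$ such that $J_{0,t'}^*(H)$ is finitely generated for all $t'\leq t$. It is open in $[0,1]$, because $J_{0,t}^*(H)=J_{t_0,t}(J_{0,t_0}^*(H))$ for $t\geq t_0$ close to $t_0$, and Lemma \ref{lem:injhomo} makes this an injective image of a finitely generated group. It contains its supremum by the intersection argument above, taking $s$ to be the supremum and replacing $J_{0,t}(H)$ with the finitely generated group $J_{0,t}^*(H)$.

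Without some argument of this kind, your proof has no mechanism forcing finite generation, and that is the real content of the lemma.
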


Here, a group $\Gamma\in\CD_3$ is said to have infinite covolume if $\vol(\Gamma\backslash\BH^3)=\infty$.

\begin{proof}[Proof of Lemma \ref{lem: J is CC}]
    Denote $\Gamma_t:=P(t)$ for all $t$. We will induct on $\rank_0(H)$.\par
    \textit{Base case:} Suppose $\rank_0(H)=0$, in which case $H=\{\id\}$. Since $H$ contains a single element and $J_{0,t}^*(H)$ is a subgroup of $\Gamma_t$ for all $t$ by Lemma \ref{Lemma-SubgroupConvg}, we must have that $J_{0,t}^*(H)=\{\id\}$ for all $t$. The trivial group is convex cocompact, completing the base case.\par

    \textit{Inductive step:} Suppose $\rank_0(H)=n\geq 1$ and that the conclusion of the lemma holds for all subgroups $H'\leq \Gamma_0$ with $\rank_0(H')<n$.\par
    
    If the restriction $J_{0,t}|_H$ is finite valued for all $t$, then Lemma \ref{lem:injhomo} says that $J_{0,t}|_H$ is an injective homomorphism for all $t$; in this case, each $J_{0,t}^*(H)=J_{0,t}(H)$ is a finitely generated subgroup of the infinite covolume convex cocompact group $\Gamma_t$, so a covering theorem due to Thurston (see also \cite{DICKCOVER}) says that $J_{0,t}^*(H)$ is convex cocompact.\par

    It therefore suffices to assume that $J_{0,t}|_H$ is not finite valued for all $t$, in which case Proposition \ref{prop:J Facts} and Lemma \ref{lem:injhomo} imply that there exists some $s\in(0,1]$ such that $J_{0,t}|_H$ is finite valued if and only if $t\in[0,s)$. 

    \begin{claim*}
        $J_{0,s}^*(H)$ is finitely generated.
    \end{claim*}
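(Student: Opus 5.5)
Set $G:=J_{0,s}^*(H)\leq\Gamma_s$. The aim is to exhibit $G$, after transport back to time $0$ along the path, as a subgroup of a finitely generated free factor of $\Gamma_s$. Since $\Gamma_s$ is convex cocompact, it is finitely generated. Write \[H':=\{\psi\in H\;|\;s\in I_\psi^s\}=\{\psi\in H\;|\;J_{0,s}(\psi)\neq\infty\}.\] As in the proof of Lemma \ref{Lemma-SubgroupConvg}, $H'$ is a subgroup of $H$ and $G=J_{0,s}(H')$. Moreover $J_{0,s}|_{H'}$ is an injective homomorphism by Lemma \ref{lem:injhomo}. So it suffices to prove that $H'$ is finitely generated.

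The plan is to compare $H'$ with $J_{s,t}(\Gamma_s)$ for $t<s$ close to $s$, mirroring the proof of Lemma \ref{Lemma-PathFreeFactorIso}.
\begin{enumerate}
\item Choose $t<s$ close to $s$. By Lemma \ref{lem:injhomo}, $J_{s,t}$ is finite valued on the finitely generated group $\Gamma_s$, and $J_{s,t}(\Gamma_s)$ is a subgroup of $\Gamma_t$ isomorphic to $\Gamma_s$, hence finitely generated.
\item By Lemma \ref{lemma-PathTCCEmbed}, applied to the restricted path $P|_{[t,s]}$ reparametrized so that $s$ plays the role of $1$, the thick convex core of $J_{s,t}(\Gamma_s)\backslash\BH^3$ has an arbitrarily large neighborhood that embeds in $M_t$. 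Proposition \ref{prop-CCEmbedFactor} then shows that $J_{s,t}(\Gamma_s)$ is a free factor of $\Gamma_t$. Applying it requires $L$ to depend only on the topology of $M_t$, which is fixed along the interval where the $J$ maps are isomorphisms onto their images, by Marden's stability theorem.
\item By Proposition \ref{prop:J Facts}(5), for $\psi\in H$ we have $\psi\in H'$ if and only if $J_{0,t}(\psi)\in J_{s,t}(\Gamma_s)$. This identifies \[J_{0,t}(H')=J_{0,t}(H)\cap J_{s,t}(\Gamma_s).\]
\item Since $J_{0,t}|_H$ is an injective homomorphism, $J_{0,t}(H)$ is finitely generated. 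The intersection of a finitely generated subgroup with a free factor $F$ of $\Gamma_t$ is finitely generated: by the Kurosh subgroup theorem, $J_{0,t}(H)$ is a free product of a free group and conjugates of subgroups of the factors, and $J_{0,t}(H)\cap F$ is one of those vertex groups. Each vertex group is a free factor of a finitely generated group, hence finitely generated (Grushko).
\item Hence $J_{0,t}(H')$, and therefore $H'\cong G$, is finitely generated.
\end{enumerate}

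The main obstacle is step 2, where Proposition \ref{prop-CCEmbedFactor} must be applied uniformly. For each fixed $t$ the constant $L$ is determined by the topology of $M_t$. Lemma \ref{lemma-PathTCCEmbed} gives the embedding for $t$ close enough to $s$ depending on the constant $D$, so we first fix $D$ from the topology of $M_{t'}$ for $t'\in[0,s)$ (constant by Marden stability) and then choose $t$. A second point to check is the correspondence in step 3. For $\psi\in H'$ we have $J_{0,t}(\psi)=J_{s,t}(J_{0,s}(\psi))$. Conversely, if $J_{0,t}(\psi)=J_{s,t}(\varphi)$, then Proposition \ref{prop:J Facts}(5) gives $J_{0,s}(\psi)=\varphi\neq\infty$. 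This uses that $s\in I^t_{J_{s,t}(\varphi)}$, which holds since $t\in I_\varphi^s$.
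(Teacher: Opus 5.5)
\textbf{Step 2 has a genuine gap.} Steps 1, 3, 4 and 5 are fine: step 3 is exactly the paper's identity in Equation \ref{eqn: intersection}, and your Kurosh argument in step 4 is correct. (Minor slip: $H'$ should be defined using $I_\psi^0$, not $I_\psi^s$.) The gap sits precisely at the uniformity you flagged.

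The constant $L$ of Proposition \ref{prop-CCEmbedFactor} depends on the topology of the ambient manifold $M_t=\Gamma_t\backslash\BH^3$; in its proof, $L$ involves $\rank(M_t)$ and Bowditch's topology-dependent constant $\eta$. Lemma \ref{lemma-PathTCCEmbed} only gives the embedding of $\CN_L(\TCC(M_t^J))$ for $t$ close enough to $s$, depending on $L$. You resolve this by asserting that the topology of $M_{t'}$ is constant on $[0,s)$ by Marden stability, but that is false here.

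Unlike Lemma \ref{Lemma-PathFreeFactorIso}, there is no hypothesis $J_{0,t}(\Gamma_0)=\Gamma_t$. Inside the induction of Lemma \ref{lem: J is CC}, the only maps known to be faithful on $[0,s)$ are $J_{0,t}|_H$, for a possibly proper subgroup $H$. Marden stability therefore controls $J_{0,t}(H)\backslash\BH^3$ (and $J_{s,t}(\Gamma_s)\backslash\BH^3$ for $t$ near $s$), not $M_t$.

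In fact the topology of $M_t$ can be unbounded as $t\to s^-$. Using Theorem \ref{thm:combo}, combine a path carrying $H$ with cyclic isomorphism bump paths supported on the intervals $(s-2/n,s-1/n)$, with ping-pong half-spaces chosen among the $A_{n,m}$. Every $\Gamma_t$ is then convex cocompact of infinite covolume, but $\rank(\Gamma_t)\to\infty$ as $t\to s^-$. So ``fix $L$, then choose $t$'' cannot be arranged, and you have not shown that $J_{s,t}(\Gamma_s)$ is a free factor of $\Gamma_t$. That statement does hold a posteriori by the Chromatography Theorem (Theorem \ref{thm: decomp}), but that theorem rests on Lemma \ref{lem: J is CC} and hence on this very claim, so citing it would be circular.

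\textbf{How to repair your route.} Change the ambient group: set $G_t:=J_{0,t}(H)$ and $K_t:=G_t\cap J_{s,t}(\Gamma_s)$.
\begin{enumerate}
\item By Thurston's covering theorem, $G_t$ is convex cocompact.
\item Since $t\mapsto J_{0,t}|_H$ is a continuous path of faithful convex cocompact representations, $G_t\backslash\BH^3\cong H\backslash\BH^3$ for all $t\in[0,s)$. This is where your Marden argument is legitimate, so a single $L$ works.
\item Since $\TCH(K_t)\subset\TCH(J_{s,t}(\Gamma_s))$, the conclusion of Lemma \ref{lemma-PathTCCEmbed} passes down. That conclusion says no element of $\Gamma_t$ outside $J_{s,t}(\Gamma_s)$ moves $\CN_L(\TCH(J_{s,t}(\Gamma_s)))$ to meet itself. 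Hence $\CN_L(\TCC(K_t\backslash\BH^3))$ embeds in $G_t\backslash\BH^3$.
\item Proposition \ref{prop-CCEmbedFactor}, which is stated for arbitrary subgroups, then makes $K_t$ a free factor of the finitely generated group $G_t$. So $K_t$ is finitely generated, and Kurosh is no longer needed.
\end{enumerate}

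\textbf{Comparison with the paper.} The paper instead notes that $J_{0,t}(H)$ and $J_{s,t}(\Gamma_s)$ are convex cocompact by the covering theorem, and applies Susskind's theorem that their intersection is convex cocompact. The repaired version of your argument avoids that external input, at the cost of rerunning the free-factor machinery.
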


    Before proving the claim, we will use it to complete the inductive step of the proof. Define the path $D:[0,s]\longrightarrow\CD_3$ by \[D(t)=J_{0,t}^*(H),\] with the continuity of $D$ given by Lemma \ref{Lemma-SubgroupConvg}. Since $J_{0,t}|_H$ is finite valued for $t\in[0,s)$, $J_{0,t}^*|_H$ is an injective homomorphism for such $t$, and so $D(t)$ is finitely generated for all $t\in[0,s]$, using the claim for $t=s$. Since each $D(t)$ is a finitely generated subgroup of the convex cocompact infinite covolume group $\Gamma_t$, the covering theorem again says each $D(t)$ is convex cocompact. It follows that (a reparameterization of) the path $D$ satisfies the hypotheses of Lemma \ref{Lemma-PathFreeFactorIso}, which allows us to conclude that \[H':=J_{s,0}(D(s))=J_{s,0}(J_{0,s}^*(H))\] is a free factor of $D(0)=H$. Since $J_{0,s}|_H$ is not finite valued, $H'$ must be a proper subgroup of $H$, so Grushko's theorem (see \cite{STA}) implies that $\rank_0(H')<\rank_0(H)$. The induction hypothesis therefore implies that $J_{0,t}^*(H')$ is convex cocompact for all $t\in[0,1]$. It follows from Proposition \ref{prop:J Facts}(5) that $H'=\{\psi\in\Gamma_0\;|\; s\in I_\psi^0\},$ so we have \[J_{0,t}^*(H)=
    \begin{cases}
        D(t), &t\in[0,s]\\
        J_{0,t}^*(H'), &t\in[s,1],
    \end{cases}\]
    confirming that $J_{0,t}^*(H)$ is convex cocompact for all $t$.\par

    \textit{We now proceed with the proof of the claim}. Since $\Gamma_s$ is finitely generated, Lemma \ref{lem:injhomo} implies that there exists some $t<s$ such that $J_{s,t}$ is finite valued. Lemma \ref{lem:injhomo} then also says that both $J_{0,t}|_H$ and $J_{s,t}$ are injective homomorphisms. \par

    Observe that \begin{equation}
        \label{eqn: intersection}
        J_{0,t}(H)\cap J_{s,t}(\Gamma_s)=J_{s,t}(J_{0,s}^*(H)).
    \end{equation}
    Indeed, if we fix $\psi\in J_{0,t}(H)\cap J_{s,t}(\Gamma_s)$, we have that $0,s\in I_\psi^t$, so Proposition \ref{prop:J Facts}(5) says that $J_{0,s}(J_{t,0}(\psi))=J_{t,s}(\psi)$. It follows that $J_{t,s}(\psi)\in J_{0,s}^*(H)$, and hence $\psi\in J_{s,t}(J_{0,s}^*(H))$, again by Proposition \ref{prop:J Facts}(2\&5). On the other hand, if $\psi'\in J_{s,t}(J_{0,s}^*(H))$, then there exists some $\varphi_0\in H$ such that $s\in I_{\varphi_0}^0$ and \[\psi'=J_{s,t}(J_{0,s}(\varphi_0)).\] It then follows from Proposition \ref{prop:J Facts}(5) that $J_{0,t}(\varphi_0)=\psi'$, which suffices to establish Equation \ref{eqn: intersection}.\par

    Finally, each of $J_{0,t}(H)$ and $J_{s,t}(\Gamma_s)$ are finitely generated, and hence convex cocompact by Thurston's covering theorem. By Equation \ref{eqn: intersection}, $J_{s,t}(J_{0,s}^*(H))$ is therefore an intersection of convex cocompact groups, so a result of Susskind \cite[Thm. 4]{SUSS} (see also \cite{Anderson-Intersections}) says that $J_{s,t}(J_{0,s}^*(H))$ is also convex cocompact. The injectivity of $J_{s,t}$ then confirms that $J_{0,s}^*(H)$ is finitely generated, completing the proof of the claim and the lemma.
\end{proof}

We make a few remarks about the proof of Lemma \ref{lem: J is CC}. In the special case that $H=\Gamma_0$ and $J_{0,t}(\Gamma_0)=\Gamma_t$ for all $t<1$, the work is reduced to showing that $J_{0,1}^*(\Gamma_0)$ is finitely generated. The proof presented here accomplishes this by using that $\Gamma_1$ is finitely generated, and citing a result of Susskind \cite{SUSS} about intersections of convex cocompact groups. A more direct proof would follow from a positive answer to the following question.

\begin{question}
    If $P:[0,1]\longrightarrow\CD_3$ is a path such that $P(t)$ is finitely generated and isomorphic to $P(0)$ for all $t<1$, must $P(1)$ be finitely generated?
\end{question}

\noindent Note that there are classical examples constructed by Thurston \cite[Sec. 7]{THUII} of \textit{sequences} of isomorphic finitely generated groups in $\CD_3$ that converge in the Chabauty topology to an infinitely generated group. As we have seen, paths in $\CD_3$ are more restricted than sequences, however, so it is not clear to the author whether similar phenomena can be observed along paths.\par

Finally, we can complete the proof of the \hyperlink{thm:decomp}{Chromatography Theorem}, providing a partial decomposition of a convex cocompact path $P:[0,1]\longrightarrow\CD_3$. Informally, the idea for the proof will be to repeatedly apply Lemma \ref{Lemma-PathFreeFactorIso} to inductively ``peel off" free factors from $P(0)$. Each successive free factor that is peeled off corresponds to an isomorphism bump path in the decomposition.

\begin{proof}[Proof of the \hyperlink{thm:decomp}{Chromatography Theorem}] We will prove the following claim:
    \begin{claim*}
        If $P:[0,1]\longrightarrow\CD_3$ is a convex cocompact path, then the path $D:[0,1]\longrightarrow\CD_3$ defined by \[D(t):=J_{0,t}^*(P(0))\] freely decomposes into a finite set of isomorphism bump paths.
    \end{claim*}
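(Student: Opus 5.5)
Proof proposal. The claim will be proved by induction on $\rank_0(P(0))$, peeling off one free factor at a time with Lemma \ref{Lemma-PathFreeFactorIso}. Write $\Gamma_t:=P(t)$ and $D(t):=J_{0,t}^*(\Gamma_0)$. If $\rank_0(\Gamma_0)=0$, then $D\equiv\{\id\}$ and the empty collection of bump paths works. The finite covolume case is degenerate: $\Gamma_0$ would then be a closed manifold group, and in that case $J_{0,t}$ stays finite valued. We will therefore assume infinite covolume, so that Lemma \ref{lem: J is CC} is available. (Chabauty-nearby groups of a cocompact group are eventually cocompact, so the covolume type should be locally constant along the path.)

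In the inductive step, suppose $\rank_0(\Gamma_0)=n\ge 1$. If $J_{0,t}$ is finite valued for all $t$, then $t\mapsto J_{0,t}$ is a continuous path of injective representations by Lemma \ref{lem:injhomo} and Proposition \ref{prop:J Facts}(3). In that case $D$ is itself a single isomorphism bump path with $J=[0,1]$, and we are done.

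Otherwise, Proposition \ref{prop:J Facts} and Lemma \ref{lem:injhomo} give an $s\in(0,1]$ such that $J_{0,t}$ is finite valued exactly for $t\in[0,s)$. As in the proof of Lemma \ref{lem: J is CC}, $D|_{[0,s]}$ is a convex cocompact path with $J_{0,t}(D(0))=D(t)$ for $t<s$. Lemma \ref{Lemma-PathFreeFactorIso}, after reparameterizing, then shows that $H':=J_{s,0}(D(s))$ is a free factor of $\Gamma_0$. Write $\Gamma_0=H'*G$. Since $J_{0,s}$ is not finite valued, $H'$ is proper, so $G\ne\{\id\}$ and by Grushko $\rank_0(H')<n$.

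Define the bump path $B(t):=J_{0,t}(G)$ for $t<s$ and $B(t):=\{\id\}$ otherwise. This is an isomorphism bump path, and by Lemma \ref{Lemma-SubgroupConvg} applied to $H=G$ it equals $J_{0,t}^*(G)$. That identity also gives continuity at $s$: every nontrivial element of $G$ has $I^0_\psi$ ending at $s$, because $G\cap H'=\{\id\}$ and $H'=\{\psi: s\in I^0_\psi\}$. To apply induction to $H'$, consider the path $t\mapsto J_{0,t}^*(H')$, which is convex cocompact by Lemma \ref{lem: J is CC}. Every subgroup of a path group is again such a path, so the claim, run with an arbitrary finitely generated subgroup $H\le P(0)$ in place of $P(0)$ (strengthening the induction this way), applies to $H'$. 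This decomposes $t\mapsto J_{0,t}^*(H')$ into finitely many bump paths.

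It remains to check that $D(t)=B(t)*J_{0,t}^*(H')$ for every $t$, and this is the main obstacle. For $t<s$ it holds because $J_{0,t}$ is an isomorphism on $\Gamma_0=G*H'$. For $t\ge s$, $B(t)$ is trivial, so we need $J_{0,t}^*(\Gamma_0)=J_{0,t}^*(H')$. This follows since $\{\psi\in\Gamma_0: t\in I^0_\psi\}\subseteq\{\psi: s\in I^0_\psi\}=H'$, which uses that the $I^0_\psi$ are intervals containing $0$, together with Proposition \ref{prop:J Facts}(5). The delicate point is confirming that $J_{0,t}^*$ restricted to $H'$ agrees with the decomposition obtained by applying induction to the path starting at $H'$, and that the free product is genuinely internal rather than just abstract. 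Injectivity of $J_{0,t}$ on the finite-valued subgroup (Lemma \ref{lem:injhomo}) handles this.
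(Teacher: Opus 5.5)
Your inductive step is the paper's argument. You locate the first escape time $s$, use Lemma \ref{lem: J is CC} and Lemma \ref{Lemma-PathFreeFactorIso} to split $\Gamma_0=H'*G$ with $H'=J_{s,0}(D(s))$, and take $B(t)=J_{0,t}^*(G)$ as the bump path. You then recurse on $H'$ via Grushko and verify $D(t)=B(t)*J_{0,t}^*(H')$ using that each $I^0_\psi$ is an interval containing $0$. Your one organizational change is legitimate and slightly cleaner: you induct over finitely generated subgroups $H\le P(0)$ with $P$ fixed, rather than applying the induction hypothesis to the new path $t\mapsto J_{0,t}^*(H_s)$. This avoids matching that new path's tracking maps with those of $P$, which the paper leaves implicit. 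Your checks that $J_{0,t}^*(G)$ is trivial for $t\ge s$, and that $J_{0,t}^*(\Gamma_0)=J_{0,t}^*(H')$ there, are also more explicit than the paper's.

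What is not actually proved is the reduction to infinite covolume, and it cannot be skipped. Lemma \ref{lem: J is CC} (through Thurston's covering theorem) needs every $\Gamma_t$, not just $\Gamma_0$, to have infinite covolume. Your parenthetical only shows that $\{t:\Gamma_t \text{ cocompact}\}$ is open. Local constancy also needs this set to be closed: if the groups $\Gamma_t$ are cocompact for $t$ accumulating at $t_0$, then $\Gamma_{t_0}$ must be cocompact. That is the nontrivial direction, and it does not follow from openness. Likewise, you give no argument for the assertion that $J_{0,t}$ stays finite valued when $\Gamma_0$ is cocompact. Both facts are supplied in the paper by \cite[Thm.~A]{ZevenbergenPaths1}. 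That result says that if some $\Gamma_s$ has finite covolume, then $J_{s,t}$ is an isomorphism onto its image for all $t$, so $D$ is itself a single isomorphism bump path. Since a discrete group containing a lattice is a lattice, it also makes finite covolume all-or-nothing along the path. With that citation in place of your heuristic, your argument is complete.
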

    
    In this claim, recall that Lemma \ref{Lemma-SubgroupConvg} implies that $D$ is continuous. Additionally, we have $D(0)=P(0)$ and $D(t)\leq P(t)$ for all $t$. To confirm that this claim implies the theorem, it remains to fix some $s\in[0,1]$ and show that $D(s)$ is a free factor of $P(s)$. Let $\Gamma_t$ denote $P(t)$ for all $t\in[0,1]$. Applying the claim to a reparameterization of the restricted domain path $P|_{[0,s]}$ (with the reversed direction) implies that the path $D':[0,s]\longrightarrow\CD_3$ defined by \[D'(t):=J_{s,t}^*(\Gamma_s)\] freely decomposes into a finite set of isomorphism bump paths. Letting $B_1',...,B_k':[0,s]\longrightarrow\CD_3$ be the set of paths in this free decomposition of $D'$ for which $B_i'(0)\neq\{\id\}$, we have that \[J_{s,0}^*(\Gamma_s)=\bigast_{1\leq i\leq k}B_i'(0).\] Additionally, Proposition \ref{prop:J Facts}(5) implies that $J_{s,0}^*(\Gamma_s)$ is the subgroup of elements of $\Gamma_0$ that $J_{0,s}$ is finite valued on, so we see that $J_{0,s}^*(\Gamma_0)=J_{0,s}^*(J_{s,0}^*(\Gamma_s))$. Hence, it follows that \[D(s)=J_{0,s}^*(\Gamma_0)=J_{0,s}^*(J_{s,0}^*(\Gamma_s))=J_{0,s}^*(\bigast_{1\leq i\leq k}B_i'(0))=\bigast_{1\leq i\leq k}B_i'(s),\] where the last equality follows from Lemmas \ref{lem:injhomo} and \ref{lem:bump vs J}, confirming that $D(s)$ is a free factor of $\Gamma_s$.\par
    
    \textit{We now proceed with the proof of the claim.} Fix a convex cocompact path $P:[0,1]\longrightarrow\CD_3$ and denote $\Gamma_t:=P(t)$ for all $t$. First, note that it suffices to assume that $\Gamma_t$ has infinite covolume for all $t$. Indeed, \cite[Thm. A]{ZevenbergenPaths1} implies that if some $\Gamma_s$ has finite covolume, then $J_{s,t}$ must be an isomorphism onto its image for all $t$, and it follows that the path $D$ appearing in the claim is itself an isomorphism bump path.\par 
    
    We assume, then, that $P(t)$ has infinite covolume for all $t$, in which case the proof of the claim will proceed by induction on $\rank_0(\Gamma_0)$, where $\rank_0(\Gamma_0)$ is as defined in Definition \ref{def: rank}. \par

    \textit{Base case:} Suppose $\rank_0(\Gamma_0)=0$. We then have that $\Gamma_0=\{\id\}$, so Lemma \ref{lem:injhomo} implies that $J_{0,t}^*(\Gamma_0)=\{\id\}$ for all $t\in[0,1]$. The path $D$ is therefore the constant path at $\{\id\}$, which is itself an isomorphism bump path.\par

    \textit{Inductive step:} Suppose $\rank_0(\Gamma_0)=n\geq 1$ and that the claim holds for all convex cocompact paths $P':[0,1]\longrightarrow\CD_3$ with $\rank_0(P'(0))<n$. The goal will be to show that the path $D:[0,1]\longrightarrow\CD_3$ defined by $D(t):=J_{0,t}^*(\Gamma_0)$ freely decomposes into finitely many isomorphism bump paths.\par

    First, assume that $J_{0,t}$ is finite valued, and therefore injective (by Lemma \ref{lem:injhomo}), for all $t\in[0,1]$. Proposition \ref{prop:J Facts}(3) then says that the map $[0,1]\longrightarrow\Hom(\Gamma_0,\Isom^+(\BH^3))$ given by $t\mapsto J_{0,t}^*$ is continuous, so $D$ itself is an isomorphism bump path.\par

    On the other hand, assume that it is not the case that $J_{0,t}$ is finite valued for all $t\in[0,1]$, in which case Proposition \ref{prop:J Facts} and Lemma \ref{lem:injhomo} imply that there exists $s\in(0,1]$ such that $J_{0,t}$ is finite valued if and only if $t\in[0,s)$, as in the inductive step of Lemma \ref{lem: J is CC}. Lemma \ref{lem: J is CC} implies that $D(t)$ is convex cocompact for all $t$, so the restricted domain path $D|_{[0,s]}$ satisfies the hypotheses of Lemma \ref{Lemma-PathFreeFactorIso}. We then have that $\Gamma_0$ splits as a free product \[\Gamma_0=H_s*K_s,\] where $H_s:=J_{s,0}(D(s))$. Informally, $H_s\leq \Gamma_0$ is the subgroup that consists of the elements of $\Gamma_0$ that ``survive" into $\Gamma_s$ without diverging to $\infty$ as they are tracked by the maps $J_{0,t}$. In particular, $H_s$ must be a proper subgroup of $\Gamma_0$, since $J_{0,s}$ is not finite valued.\par
    
    On the other hand, $J_{0,s}^*(K_s)=\{\id\}$. Furthermore, since $J_{0,t}$ is finite valued for all $t<s$, Lemma \ref{lem:injhomo} says that each such $J_{0,t}$ is an injective homomorphism, and Proposition \ref{prop:J Facts}(3) implies that the map $[0,s)\longrightarrow\Hom(K_s,\Isom^+(\BH^3))$ given by $t\mapsto J_{0,t}|_{K_s}$ is continuous. Therefore, the map $B:[0,1]\longrightarrow\CD_3$ defined by $B(t):=J_{0,t}^*(K_s)$ is an isomorphism bump path, with the continuity of $B$ given by Lemma \ref{Lemma-SubgroupConvg}.\par

    Now, define the path $D':[0,1]\longrightarrow\CD_3$ by $D'(t):=J_{0,t}^*(H_s)$, and note that Lemma \ref{lem: J is CC} says that $D'$ is convex cocompact. Since $H_s$ is a free factor properly contained in $\Gamma_0$, Grushko's theorem \cite{STA} implies that $\rank_0(H_s)<n$. The induction hypothesis therefore implies that $D'$ freely decomposes into finitely many isomorphism bump paths. Hence, to complete the proof, it suffices to show that $D$ freely decomposes into $B$ and $D'$. Lemma \ref{lem:injhomo} implies that $J_{0,t}^*$ is an isomorphism onto its image for $t<s$, so we have \[D(t)=J_{0,t}^*(\Gamma_0)=J_{0,t}^*(H_s*K_s)=J_{0,t}^*(H_s)*J_{0,t}^*(K_s)=D'(t)*B(t)\] for $t<s$. For $t\geq s$, Proposition \ref{prop:J Facts} implies that $D(t)=D'(t)$. Since $B(t)=\{\id\}$ for $t\geq s$, we have confirmed that \[D(t)=D'(t)*B(t)\] for all $t\in[0,1]$, as was to be shown.
\end{proof}

\subsection{A decomposition counterexample}

\label{subsec:decomp counterex}

The main goal for this final subsection is to construct the following example, which says that not every convex cocompact path can be decomposed in the strong way described by Definition \ref{def:free decomp}.

\begin{example}[Isomorphism bump decomposition counterexample]
\label{ex:decomp non-ex}
    There exists a convex cocompact path $P:[0,1]\longrightarrow\CD_2$ that does not freely decompose into a collection of isomorphism bump paths. 
\end{example}

To prove this example, we will recall the notion of a classical Schottky subgroup of $\Isom^+(\BH^2)$, a specific case of the standard combination theorems. Suppose that \[A_1^-,A_1^+,...,A_k^-,A_k^+\subset\BH^2\] are disjoint hyperbolic half-spaces, and that for each $i$, the isometry $\psi_i\in\Isom^+(\BH^2)$ satisfies \[\psi_i(\BH^2-\mathrm{int}(A_i^-))=A_i^+.\] Then, we say that $\Gamma=\langle\psi_1,...,\psi_k\rangle$ is a \textit{classical Schottky group}, and that $\{\psi_1,...,\psi_k\}$ is a \textit{classical Schottky generating set for $\Gamma$}. Note that not every generating set for a classical Schottky group need be a classical Schottky generating set (cf. \cite{Button}). A standard ping-pong argument (as discussed in Section \ref{subsec:combo thm}) shows that every classical Schottky generating set is a free generating set.

\begin{lemma}
\label{lemma: free group bases}
    There exists a free group $\Gamma=\langle A,B,C\rangle\leq\Isom^+(\BH^2)$ such that both $\{A,B,C\}$ and $\{B,C,D\}$ are classical Schottky generating sets for $\Gamma$, where $D=ABC$.
\end{lemma}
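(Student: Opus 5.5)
The plan is an explicit construction in the disk model. The construction hinges on one observation: the half-spaces witnessing that $\{B,C,D\}$ is classical Schottky must be \emph{different} from those used for $\{A,B,C\}$.

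Indeed, they cannot be the same. Suppose $A^\pm,B^\pm,C^\pm$ witness the first set and some $D^\pm$ are disjoint from $B^\pm,C^\pm$. Then $A^{-1}=BCD^{-1}$ maps $\BH^2-D^+$ into $D^-$, then into $C^+$, then into $B^+$. Hence the repelling fixed point of $A$ lies in the interior of the closure of $B^+$, contradicting $A^-\cap B^+=\varnothing$. So I will build the two configurations separately.

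\textbf{Tool.} For a hyperbolic isometry $g$ with fixed points $g^\pm\in S^1$ and translation length $L$, let $U^\pm$ be small round neighborhoods of $g^\pm$ whose size shrinks like $e^{-L/2}$, suitably normalized. Then $g$ maps the complement of $U^-$ onto $U^+$; these are the isometric circles in the upper half-plane picture. A direct consequence is the following criterion. Let $g_1,\dots,g_k$ be hyperbolic with $2k$ pairwise distinct fixed points, and suppose each translation length is large relative to the mutual separation of these fixed points. Then the $2k$ half-spaces $\hull(U_i^\pm)$ are pairwise disjoint, and $\{g_1,\dots,g_k\}$ is a classical Schottky generating set.

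\textbf{Construction.} Fix $B$ and $C$ hyperbolic with four distinct fixed points and translation lengths $L_B,L_C$ large enough that the criterion applies to any third generator whose fixed points avoid small neighborhoods of these four. Let $A=\phi^N$, where $\phi$ is a fixed hyperbolic element whose two fixed points avoid those of $B$ and $C$, and $N$ is large. For $N$ large, $\{A,B,C\}$ is classical Schottky by the criterion.

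Set $D=ABC$. As $N\to\infty$, the attracting fixed point of $D$ converges to $\phi^+$. The repelling fixed point of $D$ converges to the repelling point of $(BC)^{-1}$ conjugated back, namely $(BC)^{-1}(\phi^-)$. I will check this by the standard estimate that $ABC$ contracts everything outside a small neighborhood of $(BC)^{-1}(\phi^-)$ into a small neighborhood of $\phi^+$. Moreover $\ell(D)\to\infty$.

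\textbf{Main obstacle.} The remaining step, which I expect to be the main work, is to choose $\phi$ so that $(BC)^{-1}(\phi^-)$ avoids the four fixed points of $B$ and $C$ (as $\phi^+$ already does). This is an open dense condition on $\phi^-$, so a generic choice works. Then for $N$ large, the six fixed points of $B,C,D$ are distinct with uniformly bounded-below separation, while $\ell(D)$ is as large as needed. The criterion then gives a second family of disjoint half-spaces $B'^\pm,C'^\pm,D^\pm$. I would take $B'^\pm=B^\pm$ and $C'^\pm=C^\pm$ if they are already small enough, shrinking them otherwise, and add $D^\pm$ around the fixed points of $D$.

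The delicate point is making the separation and length estimates uniform in $N$. I would do this with explicit matrices in $\PSL_2\mathbb{R}$ and quantitative ping-pong on the boundary circle. Finally, $\Gamma=\langle A,B,C\rangle=\langle B,C,D\rangle$, so both sets are classical Schottky generating sets for $\Gamma$.
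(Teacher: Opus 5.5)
Your opening observation is correct, and it is exactly what breaks your last step. Keeping $B'^\pm=B^\pm$, $C'^\pm=C^\pm$ and adding $D^\pm$ disjoint from them is the configuration you proved impossible. The uniformity you hope for fails concretely. Write $b^\pm,c^\pm$ for the fixed points of $B,C$. Since $\phi^-\notin\overline{B^+}$, we get $B^{-1}(\phi^-)\in\overline{B^-}$, which lies outside $C^+$, so $(BC)^{-1}(\phi^-)=C^{-1}B^{-1}(\phi^-)\in\overline{C^-}$. Hence for large $N$ the repelling fixed point of $D$ lies inside $\overline{C^-}$, within the $e^{-\ell(C)/2}$-scale neighborhood of $c^-$. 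Its separation from $c^-$ is bounded below in $N$, but it is never large compared to the size of $C^-$, which is what your criterion needs. Enlarging $\ell(C)$ just drags it closer to $c^-$. Shrinking is not available either: the pairing $C(\BH^2-\mathrm{int}(C'^-))=C'^+$ forces $C'^+$ to grow when $C'^-$ shrinks.

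More seriously, a generic choice of $\phi$ does not work at all, because the second configuration is constrained by the cyclic order of the fixed points. Suppose $B'^\pm,C'^\pm,D^\pm$ witness $\{B,C,D\}$. As in your observation, $A^{-1}=BCD^{-1}$ maps $\BH^2-\mathrm{int}(D^+)$ into $B'^+$, so $A^{-1}(B'^+)\subset B'^+$; also $B(B'^+)\subset B'^+$. So the arc $\overline{B'^+}\cap S^1$ contains both $\phi^-$ (the repelling fixed point of $A=\phi^N$) and $b^+$. On the other hand, it cannot contain $b^-$ or $c^+$ as an interior point, since these lie in the nondegenerate arcs at infinity of $B'^-$ and $C'^+$. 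Now take the cyclic order $b^+,c^+,\phi^-,b^-,c^-,\phi^+$, an open condition compatible with all your requirements. Each of the two arcs between $b^+$ and $\phi^-$ contains $c^+$ or $b^-$. So $\{A,B,C\}$ is classical Schottky once $N$ is large, but $\{B,C,D\}$ is classical Schottky for no $N$, and your claim that a generic $\phi$ works is false.

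The missing ingredient is control of this combinatorics, together with a second family of half-spaces that must include large ones (here $B'^+$ must stretch from $b^+$ to $\phi^-$). The paper gets both at once from topology. It takes $A,B,C,ABC$ to be the four peripheral loops of a four-holed sphere. Then $\{A,B,C\}$ and $\{D,C^{-1},B^{-1}\}$ are each dual to three disjoint arcs cutting the surface into a disk, and no estimates are needed.
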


\begin{proof}
    Let $S=\Gamma\backslash\BH^2$ be a convex cocompact hyperbolic surface homeomorphic to a four holed sphere. The convex core of $S$ has four closed geodesic boundary components, which we denote $\alpha,\beta,\gamma,\delta$. Fix a base point $p\in S$ and choose loops $A,B,C\in\pi_1(S,p)$ such that $A,B,$ and $C$ are freely homotopic to $\alpha,\beta$, and $\gamma$, respectively, and such that the product loop $D:=ABC$ is freely homotopic to $\delta$. The set $\{A,B,C\}$ generates $\pi_1(S,p)$, and standard techniques\footnote{In brief, one cuts $S$ along a disjoint union of bi-infinite geodesics $g_A,g_B,g_C$ that meet $\alpha\&\delta$, $\beta\&\delta$, and $\gamma\&\delta$ orthogonally, respectively. These cuts result in a fundamental polygon in $\BH^2$ whose complement is a union of hyperbolic half-spaces, with side pairings $A,B$ and $C$.} (see \cite{Button}) show that $\{A,B,C\}$ is a classical Schottky generating set for $\Gamma$, under the identification of $\pi_1(S,p)$ with $\Gamma$.  \par

    Similarly, the loops $D, C^{-1}, B^{-1}$ are freely homotopic to (the unoriented loops) $\delta,\gamma,\beta$, respectively, and satisfy $DC^{-1}B^{-1}=A$, so the same argument implies that $\{D, C^{-1}, B^{-1}\}$ is a classical Schottky generating set for $\Gamma$, and so $\{B,C,D\}$ is as well.
\end{proof}

We end by completing the proof of Example \ref{ex:decomp non-ex}, constructing an example of a path that does not freely decompose into a collection of isomorphism bump paths.

\begin{proof}[Proof of Example \ref{ex:decomp non-ex}]
    
    Let $\Gamma=\langle A,B,C\rangle\leq\Isom^+(\BH^2)$ be a convex cocompact group as given by Lemma \ref{lemma: free group bases} so that both $\langle A,B,C\rangle$ and $\langle B,C,D\rangle$ are classical Schottky generating sets for $\Gamma$, where $D=ABC$. Our goal will be to construct a convex cocompact path $P:[0,1]\longrightarrow\CD_2$ such that

    \begin{enumerate}
        \item $P(1/2)=\Gamma$, 
        \item $P(0)=\langle A,C\rangle$,
        \item $P(1)=\langle B,D\rangle$,
        \item $J_{0,t}^*(\psi)=\psi$ for all $\psi\in \langle A,C\rangle$ and $t\in[0,1/2]$, and
        \item $J_{1,t}^*(\psi)=\psi$ for all $\psi\in \langle B,D\rangle$ and $t\in[1/2,1]$.
    \end{enumerate}

    Before carrying out the construction, we suppose that $P:[0,1]\longrightarrow\CD_2$ is a path with these named properties, and will show that $P$ suffices to prove the example. Accordingly, suppose for contradiction that $P$ freely decomposes into a collection $\CP$ of isomorphism bump paths. Let $B_1,...,B_k:[0,1]\longrightarrow\CD_2$ be the paths in $\CP$ that do not map $0$ to the trivial group, and let $B_1',...,B_{k'}':[0,1]\longrightarrow\CD_2$ be the paths in $\CP$ that do not map $1$ to the trivial group. It then follows from property (4) of $P$ and Lemma \ref{lem:bump vs J} that each restriction $B_i|_{[0,1/2]}$ is constant, for $1\leq i\leq k$. Similarly, property (5) implies that each $B_i'|_{[1/2,1]}$ is constant for $1\leq i\leq k'$. We then have  \begin{equation}
    \label{eqn: free factors}
        \langle A,C\rangle=\langle B_i(1/2)\rangle_{1\leq i\leq k}\hspace{.3in}\text{ and }\hspace{.3in}\langle B,D\rangle=\langle B_i'(1/2)\rangle_{1\leq i\leq k'}.
    \end{equation} Since $\Gamma=\langle A,B,C\rangle$ is free and $D=ABC$, we see that $\langle A,C\rangle\cap\langle B,D\rangle=\{\id\}$. Thus, it must be the case that all the paths $B_i$ and $B_j'$ are distinct, so Equation \ref{eqn: free factors} and Definition \ref{def:free decomp} of a free decomposition implies that $\langle A,B ,C\rangle$ has a free splitting with factors $\langle A,C\rangle$ and $\langle B,D\rangle$. Grushko's theorem \cite{STA}, however, then implies that $\rank(\langle A,B,C\rangle)\geq 4$, a contradiction.\par

    We now conclude with the construction of the desired path $P$. Let $L_B>0$ be the translation length of the hyperbolic isometry $B\in\Isom^+(\BH^2)$. For $t\in(0,1/2]$, let $B_t\in \Isom^+(\BH^2)$ denote the hyperbolic isometry with the same attracting and repelling fixed points as $B$, such that $B_t$ has translation length $L_B/2t$. Note that $B_{1/2}=B$ and $L_B/2t\rightarrow\infty$ as $t\rightarrow 0$. Similarly, let $L_C$ denote the translation length of $C\in \Isom^+(\BH^2)$, and for $t\in[1/2,1)$, let $C_t\in\Isom^+(\BH^2)$ be the hyperbolic isometry with the same attracting and repelling fixed points as $C$, such that $C_t$ has translation length $L_C/2(1-t)$. We then have $C_{1/2}=C$ and $L_C/2(1-t)\rightarrow\infty$ as $t\rightarrow 1$.\par

    Define the map $P:[0,1]\longrightarrow\CD_2$ by \[P(t):=\begin{cases}
        \langle A,C\rangle, &t=0\\
        \langle A,B_t,C\rangle, &0<t\leq 1/2\\
        \langle B,C_t,D\rangle, &1/2\leq t<1\\
        \langle B,D\rangle, &t=1.
    \end{cases}\]
    The continuity of $P$ follows from the \hyperlink{thm:combo}{Combination Theorem}. Indeed, since $\langle A,B,C\rangle$ is a classical Schottky generating set for $\Gamma$, we can choose disjoint closed hyperbolic half-spaces $X_A^-,X_A^+,X_B^-,X_B^+, X_C^-,X_C^+\subset\BH^2$ such that \[\alpha(\BH^2-\mathrm{int}(X_\alpha^-))=X_\alpha^+,\] for each $\alpha\in\{A,B,C\}$. Set $X_\alpha=X_\alpha^-\cup X_\alpha^+$, for each $\alpha\in\{A,B,C\}$. Define paths $P_A,P_B,P_C:[0,1/2]\longrightarrow\CD_2$ so that $P_A$ and $P_C$ are the constant paths at $\langle A\rangle$ and $\langle C\rangle$, respectively, and \[P_B(t):=\begin{cases}
        \{\id\},&t=0\\
        \langle B_t\rangle, &0<t\leq1/2.
    \end{cases}\] The continuity of $P_B$ follows from the definition of the generators $B_t$. We then have that \[\psi_t(\BH^2-X_\alpha)\subset X_\alpha\] for each $\alpha\in\{A,B,C\}$, $t\in[0,1/2]$, and $\psi_t\in P_\alpha(t)$. As $P(t)=\langle P_A(t), P_B(t), P_C(t)\rangle$ for all $t\in[0,1/2]$, the continuity of $P$ on the interval $[0,1/2]$ indeed follows from the \hyperlink{thm:combo}{Combination Theorem}, and a symmetric argument justifies the continuity of $P$ on the interval $[1/2,1]$, using that $\{B,C,D\}$ is also a classical Schottky generating set for $\Gamma$. That $P$ satisfies the five desired properties follows immediately from the construction and the definition of the maps $J_{s,t}^*$.
\end{proof}

\bibliographystyle{plain}
\bibliography{bibliography}

\end{document}